\def\be{\begin{equation}}
\def\ee{\end{equation}}
\def\bes{\begin{equation*}}
\def\ees{\end{equation*}}
\def\bea{\begin{eqnarray}}
\def\eea{\end{eqnarray}}
\def\bt{\begin{theorem}}
\def\et{\end{theorem}}
\def\bl{\begin{lemma}}
\def\el{\end{lemma}}
\def\br{\begin{remark}}
\def\er{\end{remark}}
\def\bc{\begin{corollary}}
\def\ec{\end{corollary}}
\def\bd{\begin{definition}}
\def\ed{\end{definition}}
\def\eprop{\end{proposition}}
\def\bprop{\begin{proposition}}
\newcommand{\E}{\mathbb E}
\newcommand{\ip}[2]{\left\langle#1,#2\right\rangle}
\def\otoh{On the other hand, }
\def\a{\alpha}
\def\bbR{\mathbb{R}}
\def\ep{\varepsilon}
\def\b1{B_{1}^}
\def\ba{\begin{array}}
\def\ea{\end{array}}
\def\ben{\begin{enumerate}}
\def\een{\end{enumerate}}
\theoremstyle{plain}
\newtheorem{lemma}{Lemma}
\newtheorem{theorem}[lemma]{Theorem}
\newtheorem*{remark}{Remark}
\newtheorem{proposition}[lemma]{Proposition}
\newtheorem{fact}[lemma]{Fact}
\newtheorem{corollary}[lemma]{Corollary}
\newtheorem*{definition}{Definition}
\renewcommand{\leq}{\leqslant}
\renewcommand{\geq}{\geqslant}
\newcommand{\R}{\mathbb{R}}
\renewcommand{\P}{\mathbb{P}}
\begin{document}
\title[Optimal constants in concentration inequalities]{Optimal constants in concentration inequalities on the sphere and in the Gauss space}
\keywords{}
\author{Guillaume Aubrun}
\address{{Institut Camille Jordan, Universit\'{e} Claude Bernard Lyon 1, 43 boulevard du 11 novembre 1918, 69622 Villeurbanne cedex, France}}
\address{{Univ.\ Lyon, ENS Lyon, UCBL, CNRS, Inria, LIP, F-69342, Lyon Cedex 07,
France.}}
\email{aubrun@math.univ-lyon1.fr}
\author{Justin Jenkinson}
\address{{Case Western Reserve University, Cleveland, Ohio 44106-7058, U.S.A.}}
\address{Current address: University of Akron, Akron, OH 44325-4002, U.S.A.}
\email{jjenkins2@uakron.edu}

\author{Stanislaw J. Szarek}
\address{{Case Western Reserve University, Cleveland, Ohio 44106-7058, U.S.A.}}
\address{{Institut de Math\'ematiques de Jussieu, 
Sorbonne Universit\'e, 4 place Jussieu, 75252 Paris cedex 05, France}}
\email{szarek@cwru.edu}


\begin{abstract}
We show several variants of concentration inequalities on the sphere stated as subgaussian estimates with optimal constants. 
For a Lipschitz function, we give one-sided and two-sided bounds for deviation from the median as well as from the mean. For example, we show that if $\mu$ is the normalized surface measure on $S^{n-1}$ with $n\geq 3$,   $f : S^{n-1} \to \R$ is $1$-Lipschitz, $M$ is the median of $f$, and $t >0$,  then $\mu\big(f \geq M +t\big) \leq \frac 12 e^{-nt^2/2}$. 
If $M$ is the mean of $f$, we have a two-sided bound $\mu\big(|f - M| \geq t\big) \leq e^{-nt^2/2}$. Consequently, if $\gamma$ is the standard  Gaussian measure on $\R^n$ and $f : \R^{n} \to \R$ (again, $1$-Lipschitz, with the mean equal to $M$), then 
$\gamma \big(|f - M| \geq t\big) \leq  e^{-t^2/2}$. 
These bounds are slightly better and arguably more elegant than those available elsewhere in the literature. 
\end{abstract}

\maketitle

\section{Introduction and the main results} L\'evy's isoperimetric inequality on the sphere in $\R^n$ 
\cite{Levy1951, Schmidt} is one of the most useful tools in the study of high-dimensional phenomena.  
The isoperimetric inequality itself is a very precise result: {\em Among 
the subsets of  the sphere of given measure, the caps have the smallest 
boundary}.  On the other hand, typical applications appeal to its corollaries, 
which  exhibit varying degrees of tightness.  
Such corollaries are most often expressed as subgaussian concentration inequalities of the form 
 \begin{equation} \mu(\left\{f \geq M+t\right\}) \leq  C\ e^{-cn t ^2} ,
 \label{eq:normal}
\end{equation}
valid for any $1$-Lipschitz real valued function on the sphere 
and any $t>0$, 
where $\mu =\mu_n$ is the normalized surface measure on the sphere, 
$M$ is either the median or the mean of~$f$, and $C,c>0$ are 
(explicit or effectively computable) constants, independent of  
$n$, $f$,  and~$t$. 
Perhaps the most frequently cited variant of a spherical concentration inequality 
comes from the  influential 1986 book of Milman and Schechtman \cite{MS86}. 

\begin{fact}\label{fact:MS} If $f: S^{n+1} \rightarrow \R$ is a $1$-Lipschitz
function  (with respect to the geodesic distance) and $M$ is its median, then for every $t> 0$
\begin{equation} \label{eq:MS} 
 \mu(\left\{f \geq M+t\right\}) \leq  \sqrt{{\pi}/8} \ e^{-n t ^2/2}.
 \end{equation}
Equivalently, if $A \subset S^{n+1}$ is such that $\mu(A)\geq \frac 12$, then -- for every $t> 0$ -- 
the $t$-enlargement of $A$ defined by $A_t:=\{x : {\rm dist}(x,A)<t\}$ verifies 
$\mu(A_t) \geq  1-\sqrt{{\pi}/8} \ e^{-n t ^2/2}$.
\end{fact}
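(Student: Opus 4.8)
My plan is the classical two-step argument: first reduce the one-sided deviation bound to L\'evy's isoperimetric inequality, which replaces an arbitrary set of measure at least $\frac12$ by a hemisphere, and then estimate the measure of the resulting spherical cap by elementary calculus, keeping careful track of the constant. For the reduction, put $B=\{f\le M\}$, so $\mu(B)\ge\frac12$ by the definition of a median. If $x$ lies in the $t$-enlargement $B_t=\{x:\mathrm{dist}(x,B)<t\}$, choose $y\in B$ with $\mathrm{dist}(x,y)<t$; the Lipschitz hypothesis gives $f(x)\le f(y)+t<M+t$, hence $\{f\ge M+t\}\subseteq S^{n+1}\setminus B_t$ and $\mu(\{f\ge M+t\})\le 1-\mu(B_t)$. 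By L\'evy's isoperimetric inequality in its enlargement form (caps minimize the measure of the $t$-enlargement among sets of prescribed measure), $\mu(B_t)\ge\mu(H_t)$ for a closed hemisphere $H$; since $H_t$ is the cap of geodesic radius $\frac\pi2+t$ about the pole of $H$, its complement is the cap $C$ of geodesic radius $\frac\pi2-t$ about the antipodal pole (and if $t\ge\frac\pi2$ the set $\{f\ge M+t\}$ is empty, so there is nothing to prove). Thus $\mu(\{f\ge M+t\})\le\mu(C)$, and running the same chain for an arbitrary $A$ with $\mu(A)\ge\frac12$ yields the equivalent enlargement formulation $\mu(A_t)\ge 1-\mu(C)$.

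Next I estimate $\mu(C)$. Slicing $S^{n+1}\subset\R^{n+2}$ by the angle $\phi\in[0,\pi]$ to a fixed axis, the image of the normalized measure under the angle map has density proportional to $\sin^n\phi$, so
\[
\mu(C)=\frac{\int_0^{\pi/2-t}\sin^n\phi\,d\phi}{\int_0^{\pi}\sin^n\phi\,d\phi}=\frac{\int_t^{\pi/2}\cos^n\psi\,d\psi}{2\int_0^{\pi/2}\cos^n\psi\,d\psi}
\]
after the substitution $\phi=\frac\pi2-\psi$. For the numerator I would use $\cos\psi\le e^{-\psi^2/2}$ on $[0,\frac\pi2]$ (equivalently $\log\cos\psi\le-\psi^2/2$, which follows from $\tan\psi\ge\psi$), enlarge the domain of integration to $[t,\infty)$, rescale $\psi\mapsto\psi/\sqrt n$, and apply the Gaussian tail bound $\int_a^{\infty}e^{-s^2/2}\,ds\le\sqrt{\pi/2}\,e^{-a^2/2}$ (obtained by writing $s=a+r$ and discarding the factor $e^{-ar}\le 1$), which gives $\int_t^{\pi/2}\cos^n\psi\,d\psi\le\frac1{\sqrt n}\sqrt{\pi/2}\,e^{-nt^2/2}$. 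For the denominator I need the Wallis-type lower bound $W_n:=\int_0^{\pi/2}\cos^n\psi\,d\psi\ge 1/\sqrt n$. Combining the two bounds yields $\mu(C)\le\frac{\sqrt{\pi/2}}{2}\,e^{-nt^2/2}=\sqrt{\pi/8}\,e^{-nt^2/2}$, as required.

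Since L\'evy's inequality is quoted rather than reproved, the only real work is the cap estimate, and within it the sole delicate ingredient is the bound $W_n\ge 1/\sqrt n$: it is sharp (equality at $n=1$), so it cannot be extracted from any lossy estimate of $W_n$, and obtaining the constant exactly $\sqrt{\pi/8}$ rather than something slightly larger depends on it. I would prove it from the recursion $W_nW_{n-1}=\pi/(2n)$ (integration by parts) together with the monotonicity $W_{n+1}\le W_n$: these give $W_n^2\ge W_nW_{n+1}=\pi/(2(n+1))\ge 1/n$ whenever $\pi n\ge 2(n+1)$, i.e.\ for $n\ge 2$, while $W_1=1$ covers the remaining case (the case $n=0$, the circle, being trivial since $\sqrt{\pi/8}>\frac12$). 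Everything else is a routine verification.
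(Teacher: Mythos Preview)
Your proof is correct. Note, however, that the paper does not actually give a proof of Fact~\ref{fact:MS}; it is quoted as a known result from \cite{MS86}, and the paper's contribution is the sharper Theorem~\ref{sphereconstants}. Your argument is precisely the classical Milman--Schechtman derivation: reduce via L\'evy's isoperimetric inequality to a cap, write the cap measure as $\frac{\int_t^{\pi/2}\cos^n\psi\,d\psi}{2I_n}$, bound the numerator via $\cos\psi\le e^{-\psi^2/2}$ and the Gaussian tail estimate, and bound the denominator below via a Wallis-type inequality. Your verification of $W_n\ge 1/\sqrt{n}$ is clean; the paper records the slightly sharper two-sided bound $\sqrt{\pi/(2m+2)}\le I_m\le\sqrt{\pi/(2m+1)}$ in Proposition~\ref{wallis}(ii), but yours is exactly what is needed to land on the constant $\sqrt{\pi/8}$.

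It may be worth noting, for context, that your approach is also the skeleton of the paper's proof of Theorem~\ref{sphereconstants} in Section~\ref{proof2}: the same isoperimetric reduction and the same cap-measure formula appear there. The difference is that instead of the crude bounds $\cos^n\psi\le e^{-n\psi^2/2}$ and $I_n\ge 1/\sqrt{n}$ (each of which loses a multiplicative factor), the paper analyzes the exact ratio $q_n(x)=I_{n-2}^{-1}\int_x^{\pi/2}\cos^{n-2}\theta\,d\theta\cdot e^{nx^2/2}$ directly, proving it is nonincreasing in $n$ (by steps of two) and then checking $q_3,q_4\le 1$. That more delicate route is what buys the improved constant $\tfrac12$ and the replacement of $S^{n+1}$ by $S^{n-1}$.
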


The spherical  concentration inequality \eqref{eq:MS} played a huge role in the development of the theory. However, its statement is not completely satisfactory for two reasons.
First, while it is well known that the constant $c=\frac 12$ in the exponent is optimal, stating  Fact \ref{fact:MS} for $S^{n+1}$ sweeps under the carpet the inconvenient truth that the dimension of the ambient space in this formulation is $n+2$, while the factor that appears in the exponent is just $n$. 
It would be more elegant and convenient for that factor to coincide with the dimension of the ambient space,
leading {\em directly} to a concentration result of the form \eqref{fact:MS}.
Next, the constant $C=\sqrt{\pi/8} \approx 0.626657$ in \eqref{eq:MS} is not optimal (ideally, both bounds in Fact \ref{fact:MS} should tend to $\mu(A)=\frac 12$ as $t\to 0^+$). 
Even though it is possible to replace $n+1$ with $n-1$ while adjusting the constants, doing that would only exacerbate the second drawback. 
Here we will prove the following version of the inequality that addresses 
all these concerns.  We emphasize that this bound is not meant to be optimal; in fact, various bounds that are better and near optimal in various asymptotics are known and not that difficult (for example, see Proposition \ref{prop-cap}, its proof, and the comments following it). However, we believe that our results offer a reasonable compromise between sharpness, simplicity, and the ease of application.   

\begin{theorem}\label{sphereconstants}  {\rm (\cite{thesis}) } 
Let $n > 2$ and $t > 0$.
If $A\subset S^{n-1}$ satisfies $\mu(A)\geq 1/2$ then 
\be \label{eq:sphereconstants} 
\mu(A_t )\geq 1- \frac{1}{2}\,e^{-t ^2n/2}.
\ee 
Consequently, if $f: S^{n-1} \rightarrow \R$ is a 
function  which is $L$-Lipschitz with respect to the geodesic distance  and if $M$ is its median, then for every $t \geq  0$,
\be \label{eq:spherelipschitz}
\mu(\left\{f > M+t \right\}) \leq  \frac{1}{2} \, e^{-n t ^2/2L^2} \ 
\hbox{ and } \  \mu(\left\{|f - M| > t\right\}) \leq e^{-n t ^2/2L^2} .
\ee
\end{theorem}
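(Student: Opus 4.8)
The plan is to prove the isoperimetric enlargement inequality \eqref{eq:sphereconstants} first, and then derive the Lipschitz consequences \eqref{eq:spherelipschitz} formally. For the enlargement inequality, I would invoke L\'evy's isoperimetric inequality: among all sets $A \subset S^{n-1}$ of a given measure, spherical caps minimize the measure of the $t$-enlargement $A_t$. Hence, if $\mu(A) \geq 1/2$, we may compare with a cap $C$ of measure exactly $1/2$, i.e.\ a hemisphere, so that $\mu(A_t) \geq \mu(C_t)$, where $C_t$ is the cap of geodesic radius $\pi/2 + t$ around the pole. Writing $c_{n-1}$ for the normalizing constant so that $\mu$ is a probability measure, the quantity to bound is
\be
1 - \mu(C_t) = c_{n-1}\int_{\pi/2 + t}^{\pi} (\sin\theta)^{n-2}\, d\theta
= c_{n-1}\int_{0}^{\pi/2 - t} (\sin\theta)^{n-2}\, d\theta,
\ee
and the goal is to show this is at most $\tfrac12 e^{-nt^2/2}$. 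Since $1 - \mu(C_0) = 1/2$ exactly, this amounts to showing that the ratio
\be
\phi(t) := \frac{\int_{0}^{\pi/2 - t} (\sin\theta)^{n-2}\, d\theta}{\int_{0}^{\pi/2} (\sin\theta)^{n-2}\, d\theta}
\ee
satisfies $\phi(t) \leq e^{-nt^2/2}$ for all $t \in [0, \pi/2]$ (and the bound is trivial for $t > \pi/2$, where the left side is $0$).

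The main obstacle is precisely this one-variable estimate on $\phi(t)$, which must hold for every integer $n > 2$ with the clean constant $n/2$ in the exponent. The natural approach is to bound the logarithmic derivative: $-(\log\phi)'(t) = (\sin(\pi/2 - t))^{n-2} / \int_0^{\pi/2 - t}(\sin\theta)^{n-2}d\theta = (\cos t)^{n-2} \big/ \int_{t}^{\pi/2}(\cos\theta)^{n-2}\,d\theta$ after a substitution. One then wants to show this exceeds $nt$ pointwise on $(0, \pi/2)$, so that integrating from $0$ gives $-\log\phi(t) \geq nt^2/2$. Equivalently, setting $g(t) = \int_t^{\pi/2}(\cos\theta)^{n-2}\,d\theta$, the claim is $(\cos t)^{n-2} \geq nt\, g(t)$; both sides vanish suitably and one can compare derivatives again, or use the inequality $\cos\theta \leq \cos t \cdot e^{-(\theta^2 - t^2)/2}$ valid because $\log\cos$ is concave with second derivative $\leq -1$, hence $\log\cos\theta - \log\cos t \leq -\tfrac12(\theta^2-t^2) $... actually one should check the sign and exploit $(\log\cos)''(\theta) = -\sec^2\theta \leq -1$. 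This yields $g(t) \leq (\cos t)^{n-2} e^{(n-2)t^2/2}\int_t^{\pi/2} e^{-(n-2)\theta^2/2}\,d\theta$, and a Gaussian tail estimate on the remaining integral should close the gap with room to spare for $n > 2$; the endpoint behavior near $t = \pi/2$ needs a separate easy check since $\cos t \to 0$ there.

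Finally, for \eqref{eq:spherelipschitz}: given an $L$-Lipschitz $f$ with median $M$, the set $A = \{f \leq M\}$ has $\mu(A) \geq 1/2$, and any point at geodesic distance less than $t/L$ from $A$ has $f < M + t$ by the Lipschitz bound, so $\{f \geq M + t\} \subset S^{n-1}\setminus A_{t/L}$ (with due care at the boundary, giving the strict inequality $\{f > M+t\}$); applying \eqref{eq:sphereconstants} with $t/L$ in place of $t$ gives $\mu(\{f > M+t\}) \leq \tfrac12 e^{-nt^2/2L^2}$. Applying the same reasoning to $-f$, whose median is $-M$, gives $\mu(\{f < M - t\}) \leq \tfrac12 e^{-nt^2/2L^2}$, and adding the two one-sided bounds yields $\mu(\{|f - M| > t\}) \leq e^{-nt^2/2L^2}$. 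This last step is routine; the entire difficulty is concentrated in the sharp scalar inequality for $\phi$.
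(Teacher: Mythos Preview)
Your overall framework---reduce via L\'evy's isoperimetric inequality to caps, prove the scalar inequality $\phi(t)\leq e^{-nt^2/2}$, then deduce the Lipschitz statements by the standard argument---matches the paper exactly, and the final paragraph deriving \eqref{eq:spherelipschitz} from \eqref{eq:sphereconstants} is correct.

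The gap is in your attack on the scalar inequality. Your log-derivative plan asks for the pointwise bound $(\cos t)^{n-2}\geq nt\int_t^{\pi/2}(\cos\theta)^{n-2}\,d\theta$, and you propose to prove it via the inequality $\cos\theta\leq (\cos t)\,e^{-(\theta^2-t^2)/2}$ (which is correct, coming from $(\log\cos)''=-\sec^2\leq -1$). But raising this to the $(n-2)$th power and integrating gives
\[
\int_t^{\pi/2}(\cos\theta)^{n-2}\,d\theta\ \leq\ (\cos t)^{n-2}\,e^{(n-2)t^2/2}\int_t^{\pi/2}e^{-(n-2)\theta^2/2}\,d\theta ,
\]
so you would need $nt\,e^{(n-2)t^2/2}\int_t^{\pi/2}e^{-(n-2)\theta^2/2}\,d\theta\leq 1$. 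Substituting $u=t\sqrt{n-2}$ turns the left side into $\frac{n}{n-2}\cdot u\,e^{u^2/2}\int_u^{U}e^{-s^2/2}\,ds$ with $U=\frac{\pi}{2}\sqrt{n-2}$, and the Mill's-ratio factor $u\,e^{u^2/2}\int_u^{\infty}e^{-s^2/2}\,ds$ increases to $1$. So the best you can get from this route is the factor $\frac{n}{n-2}$, not $1$; the ``room to spare'' you expect is simply not there (a direct check already at $n=3$, $t=1$ confirms the sufficient condition fails). Your comparison only sees the exponent $n-2$, never $n$.

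The paper handles the scalar inequality differently. It sets $q_n(x)=\phi(x)\,e^{nx^2/2}$ and, rather than trying to show $q_n$ is decreasing (which is what your pointwise inequality says), proves the \emph{recursive} bound $q_{n+2}(x)\leq q_n(x)$ using the reduction formula $\int\cos^n=\frac{1}{n}\cos^{n-1}\sin+\frac{n-1}{n}\int\cos^{n-2}$ together with the elementary estimate $\int_x^{\pi/2}\cos^{n-2}\theta\,d\theta\leq\frac{\cos^{n-1}x}{(n-1)\sin x}$ and the inequality $1-e^{-x^2}\leq\sin^2 x$. This reduces everything to verifying $q_3\leq 1$ and $q_4\leq 1$ on $[0,\pi/2]$, which is done by a short direct (partly numerical) analysis. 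Interestingly, your pointwise monotonicity claim for $q_n$ appears to be true (the paper in effect proves it for $n=3$ in its analytic check), but establishing it for all $n>2$ would require a different argument than the one you sketched.
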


\begin{remark} {\rm Before continuing, let us comment on the case $n=2$, i.e., that of the $1$-dimensional sphere $S^1$.  As easily follows from the general argument sketched in Section~\ref{proof2},  the optimal lower bound in \eqref{eq:sphereconstants} is then very simple and reads (in the nontrivial range $(0,\pi/2]$)
\[
\mu(A_t )\geq\frac{1}{2}+\frac{t }{\pi} . 
\]
A direct check shows then that the estimate \eqref{eq:sphereconstants} from Theorem \ref{sphereconstants} 
fails  if $t  \in (\a,\beta)$, where $\a\approx 1.05858, \beta \approx 1.18588$.
However, it remains true outside of this interval. (This is visualized in Figure \ref{theplot} further below.)  Also, if we use the extrinsic chordal distance in $\R^2$ (or in $\R^n$, for any $n\geq 2$) instead of the geodesic distance, the estimate holds in the entire nontrivial range $ [0,\sqrt{2}]$. Note that the extrinsic distance, i.e., the usual Euclidean distance in the ambient space $\R^n$,   is in many applications more relevant than the geodesic distance. This happens  for example when the function $f$ is defined -- and Lipschitz -- on the entire space  $\R^n$, or at least on the unit ball.} \qed
\end{remark}

Let us now pass to the discussion of bounds of the form \eqref{eq:normal} (or \eqref{eq:spherelipschitz}) with $M=\E f$, the expected value of $f$. The first observation is that, in this context, the value of the constant $C$ can not be smaller than $1$, which is shown by the following simple example. (The example  is cooked up  for $n=2$, but clearly a simple modification with similar features can be produced for any $n\geq 2$, see also Section \ref{sphereconstants2}.)  Identify $S^1$ with $(-\pi,\pi]$ endowed with the {\em normalized} Lebesgue measure, which we will also denote by $\mu$. Next, let $\delta \in (0, \pi)$ and let   $f : [-\pi,\pi] \to \R$ be defined by $f(x)= \min\{|x| - \delta, 0\}$. Then $\E f = -\frac{\delta^2}{2\pi}$ and so, for $t\in (0, \frac{\delta^2}{2\pi})$, 
\[
\mu(\left\{f > \E f+t \right\}) \geq \mu(\left\{f \geq 0\right\}) = 1-\frac{\delta}\pi. 
\]
Accordingly, if -- for such $t$ --  we have $\mu(\left\{f > \E f+t \right\}) \leq C\, e^{-cn t ^2}< C$, then letting $\delta \to 0^+$ yields  $C\geq 1$. 

Thus, when $M=\E f$,  the best bound we may hope for in the estimate \eqref{eq:normal}  is $e^{-n t ^2/2}$.  
Somewhat surprisingly, a stronger fact is true: we also have (for $n>2$) a {\em two-sided } bound of the same form. 

\begin{theorem}\label{sphereconstants2} 
If $f: S^{n-1} \rightarrow \R$ is a function  which is $L$-Lipschitz with respect to the geodesic distance, then, for every $t \geq  0$,
\be \label{eq:spherelipschitz2} 
\mu(\left\{f \geq \E f+t \right\}) \leq  e^{-n t ^2/2L^2} \ \hbox{ for all } \ n\geq 2, 
\ee
\be \label{eq:spherelipschitz3} 
 \mu(\left\{|f - \E f| \geq t  \right\}) \leq e^{-n t ^2/2L^2} \ \hbox{ for all } \ n > 2. 
\ee
\end{theorem}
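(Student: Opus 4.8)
The plan is to establish \eqref{eq:spherelipschitz2} first and then obtain \eqref{eq:spherelipschitz3} by running the same mechanism on the two tails simultaneously. By homogeneity we may assume $L=1$. Write $P(\theta)$ for the normalized measure of a geodesic cap of radius $\theta$ in $S^{n-1}$, so that $P:[0,\pi]\to[0,1]$ is a continuous increasing bijection with $P'(\theta)=c_n\sin^{n-2}\theta$ (for the appropriate normalizing constant $c_n$), $P(\pi/2)=\tfrac12$, and $P(\theta)+P(\pi-\theta)=1$; extend $P$ by $1$ on $[\pi,\infty)$ and set $u(\rho):=\int_\rho^\pi(1-P(\theta))\,d\theta$, so that $u$ is nonincreasing, $u(\pi)=0$, and $u'=P-1$. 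The reduction step is this. Given a $1$-Lipschitz $f$ and $t>0$, put $A:=\{f\ge\E f+t\}$, $\beta:=\mu(A)$, and choose $\rho$ with $P(\rho)=\beta$. By L\'evy's isoperimetric inequality $\mu(A_s)\ge P(\rho+s)$ for every $s\ge0$; since $f$ is $1$-Lipschitz, $A_s\subseteq\{f>\E f+t-s\}$, hence $\mu(\{f<\E f+t-s\})\le 1-P(\rho+s)$. Applying this to the nonnegative part of $g:=\E f+t-f$, whose mean is $t$,
\be
 t=\E g\le\E g^+=\int_0^\infty\mu\bigl(\{f<\E f+t-s\}\bigr)\,ds\le\int_0^\infty\bigl(1-P(\rho+s)\bigr)\,ds=u(\rho).
\ee
As $\beta=P(\rho)$ and $u(\rho)\ge t$, the estimate \eqref{eq:spherelipschitz2} follows at once from the one-variable inequality
\be \label{pp-1d}
 P(\rho)\le\exp\!\bigl(-\tfrac n2\,u(\rho)^2\bigr),\qquad\rho\in[0,\pi];
\ee
and \eqref{pp-1d} is in fact equivalent to \eqref{eq:spherelipschitz2}, being realized by the extremal functions $f=-\bigl(\mathrm{dist}(\cdot,e)-\rho\bigr)_+$.

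The analytic core is therefore \eqref{pp-1d}, which I would prove by a direct study of the cap-measure function. Equality holds at $\rho=\pi$, and near $\rho=\pi$ one has $1-P(\theta)\asymp(\pi-\theta)^{n-1}$, so $u(\rho)=O\bigl((\pi-\rho)^{n}\bigr)$ while $-\log P(\rho)\asymp(\pi-\rho)^{n-1}$; hence \eqref{pp-1d} holds with strict inequality in a left neighbourhood of $\pi$. For the global statement I would track the defect $D(\rho):=-\log P(\rho)-\tfrac n2u(\rho)^2$, whose derivative is $D'(\rho)=-P'(\rho)/P(\rho)+n\,u(\rho)\bigl(1-P(\rho)\bigr)$, and show that $D$ does not vanish on $[0,\pi)$. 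Controlling the sign here comes down to a sufficiently sharp lower bound for $P(\rho)$ on $[0,\pi/2]$ (where $P(\rho)$ decays essentially like $\sin^{n-1}\rho$) — equivalently, precise two-sided estimates for the incomplete integral $\int_0^\rho\sin^{n-2}$ — of exactly the kind established in Proposition~\ref{prop-cap}. Turning this qualitative picture into a clean bound uniform in $n\ge2$ and in $\rho$ is the main obstacle.

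For the two-sided bound \eqref{eq:spherelipschitz3}, where $n>2$ is needed, I would run the reduction on both tails at once. Put $A^\pm:=\{\pm(f-\E f)\ge t\}$, $\beta_\pm:=\mu(A^\pm)$, and pick $\rho_\pm$ with $P(\rho_\pm)=\beta_\pm$. Applying the reduction above to $f$ and to $-f$ gives $t\le u(\rho_+)$ and $t\le u(\rho_-)$. The sets $A^+,A^-$ are disjoint, and for $0\le s\le t$ so are their enlargements, since $A^+_s\subseteq\{f>\E f+(t-s)\}$ and $A^-_s\subseteq\{f<\E f-(t-s)\}$; taking $s=t$ and invoking isoperimetry on each, $P(\rho_++t)+P(\rho_-+t)\le\mu(A^+_t\cup A^-_t)\le1$, which by $P(\theta)+P(\pi-\theta)=1$ forces $\rho_++\rho_-+2t\le\pi$. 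Thus \eqref{eq:spherelipschitz3} reduces to the two-variable inequality
\be \label{pp-2d}
 P(\rho_+)+P(\rho_-)\le e^{-nt^2/2},
\ee
to be proved whenever $\rho_\pm\ge0$, $t\ge0$, $t\le\min\{u(\rho_+),u(\rho_-)\}$, and $\rho_++\rho_-+2t\le\pi$. Combined with \eqref{pp-1d} — which already bounds each summand by $e^{-nt^2/2}$ — the extra constraint $\rho_++\rho_-+2t\le\pi$ keeps $\rho_+$ and $\rho_-$ from both being large and is what rules out the factor $2$ a naive union bound would give. In the only delicate regime, $t$ of order $n^{-1/2}$ with $\rho_\pm$ within $O(n^{-1/2})$ of $\pi/2$, inequality \eqref{pp-2d} becomes — after the Gaussian rescaling $\sqrt n\,x_1\to N(0,1)$ for the first coordinate — the classical sharp estimate $\P\bigl(|N(0,1)|\ge z\bigr)\le e^{-z^2/2}$; this is precisely where $n>2$ enters (for $n=2$ the first coordinate is not asymptotically Gaussian and, as noted in the Remark, such statements genuinely fail), while away from this regime \eqref{pp-2d} holds with room to spare. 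Supplying complete proofs of \eqref{pp-1d} and \eqref{pp-2d} through the cap-measure estimates is the remaining technical work, and it is there that the difficulty lies.
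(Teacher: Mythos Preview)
Your reduction for the one-sided inequality \eqref{eq:spherelipschitz2} is correct and, once one sets $\rho=\pi/2-a$, coincides exactly with the paper's: your \eqref{pp-1d} is the paper's \eqref{eq:one-sided}, and your extremal function $-(\mathrm{dist}(\cdot,e)-\rho)_+$ is the paper's $\phi_a$ up to an additive constant. Your route to it --- isoperimetry plus the layer-cake identity for $\E g^+$ --- is in fact somewhat more direct than the paper's rearrangement followed by Lemma~\ref{extreme}, but it lands in the same place.

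For \eqref{eq:spherelipschitz3} the reductions differ. The paper invokes Lemma~\ref{extreme2}, which identifies the extremal function as some $\phi_a$ and hence reduces the problem to a \emph{one}-parameter family, with $\rho_+=u^{-1}(t)$ and $\rho_++\rho_-+2t=\pi$ forced. Your reduction yields only the constraints $t\le u(\rho_\pm)$ and $\rho_++\rho_-+2t\le\pi$, leaving a genuinely two-dimensional problem \eqref{pp-2d}. This can be repaired: on the boundary $\rho_++\rho_-+2t=\pi$ one has $P(\rho_+)+P(\rho_-)=1+P(\rho_+)-P(\rho_++2t)$, whose $\rho_+$-derivative $P'(\rho_+)-P'(\rho_++2t)$ is nonnegative for $\rho_+\ge\pi/2-t$, so the maximum over your feasible set is attained either at $\rho_+=u^{-1}(t)$ (the paper's curve) or at $\rho_-=0$ (the one-sided case). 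But you have not supplied this step, and without it \eqref{pp-2d} is a priori harder than what the paper actually proves.

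The real gap is that you do not prove \eqref{pp-1d} or \eqref{pp-2d}. The paper's argument is not a global sign analysis of $D'(\rho)$ but a case-by-case verification --- splitting on the sign of $a$ and on whether $|a|\sqrt{n}$ is small or large --- that combines Proposition~\ref{prop-main}, the cap estimates of Proposition~\ref{prop-cap}, Komatu's inequality~\eqref{komatsu}, and the refined comparison Lemmas~\ref{improved} and~\ref{powers}. Your heuristic that the delicate regime reduces to $\P(|Z|\ge z)\le e^{-z^2/2}$ is on the mark, but making it uniform in $n$ and controlling the non-Gaussian regimes is exactly where the paper's effort lies. As you yourself say, that is ``the remaining technical work,'' and in the paper it occupies most of the proof.
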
 
\begin{remark} {\rm (i) All the comments presented in the remark following Theorem \ref{sphereconstants} apply {\em mutatis mutandis} to the case $n=2$ of  the inequality from \eqref{eq:spherelipschitz3}.\ (ii) Obviously, the bound 
\eqref{eq:spherelipschitz3} is stronger than \eqref{eq:spherelipschitz2}.  However, we state the latter separately since it is also valid for $n=2$. Moreover, its proof provides some additional information, is a good warmup for the harder proof of \eqref{eq:spherelipschitz3}, and in fact some cases considered in the proof of \eqref{eq:spherelipschitz3} reduce to instances of~\eqref{eq:spherelipschitz2}. 
(iii) An alert reader will recall that a powerful standard tool for obtaining subgaussian estimates for deviations 
from the expected value of a random variable is the log-Sobolev inequality, and will wonder whether at least the one-sided part of the assertion of  Theorem \ref{sphereconstants2}  (i.e., the inequality from \eqref{eq:spherelipschitz2})  can be derived that way. This is almost true, but not quite. 
Indeed, a log-Sobolev inequality for a Riemannian manifold $M$ is usually deduced from a bound on $c(M)$, the Ricci curvature of $M$. Now, $c(S^{n-1}) = n-2$, which by general arguments alluded to above leads to an estimate of the form \eqref{eq:normal} with $M=\E f$,  $C=1$, and the coefficient of $t^2$ in the exponent on the right hand side equal to $\frac{1}2 \frac{(n-1)\,c(S^{n-1})}{n-2}= \frac{n-1}2$.  The stronger two-sided estimate \eqref{eq:spherelipschitz3} presents further problems. }
  \qed
\end{remark} 

A standard consequence of Theorem \ref{sphereconstants2} is the following deviation result for the Gaussian space. 

\begin{corollary}\label{cor:gaussian} 
Let $\gamma=\gamma_n$ be the standard Gaussian measure on $\R^n$ and let $f: \R^n \rightarrow \R$ be an $L$-Lipschitz function (with respect to the Euclidean distance). Next, let $M=\E f$ and $t > 0$. Then 
\be \label{eq:gaussian} 
 \gamma(\left\{|f - M| \geq t  \right\}) \leq e^{-t ^2/2L^2}.
\ee
\end{corollary}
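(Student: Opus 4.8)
The plan is to obtain \eqref{eq:gaussian} from the two-sided spherical estimate \eqref{eq:spherelipschitz3} by the classical \emph{Poincar\'e limit} procedure, which realizes $\gamma_n$ as the weak limit of the pushforwards of the uniform measures on large spheres under a coordinate projection. Concretely, for $N>n$ let $\sigma_N$ be the uniform probability measure on the sphere $\sqrt N\,S^{N-1}\subset\R^N$ of radius $\sqrt N$, and let $\Pi:\R^N\to\R^n$ be the orthogonal projection onto the first $n$ coordinates. The classical observation (going back to Maxwell, Borel, and Poincar\'e) is that $\Pi_*\sigma_N\to\gamma_n$ weakly as $N\to\infty$; moreover, by symmetry, $\int_{\R^n}|x|^2\,d(\Pi_*\sigma_N)(x)=n$ for every $N$, so these measures have uniformly bounded second moments.

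Given an $L$-Lipschitz $f:\R^n\to\R$, set $g_N:=f\circ\Pi$, viewed as a function on $\sqrt N\,S^{N-1}$. Since $\Pi$ is $1$-Lipschitz for the Euclidean distances and the Euclidean (chordal) distance on a sphere is dominated by the geodesic distance, $g_N$ is $L$-Lipschitz with respect to the geodesic distance on $\sqrt N\,S^{N-1}$. Rescaling to the unit sphere multiplies geodesic distances by $\sqrt N$, so the corresponding function $h_N$ on $S^{N-1}$ is $(L\sqrt N)$-Lipschitz with respect to the geodesic distance. Applying \eqref{eq:spherelipschitz3} on $S^{N-1}$ with Lipschitz constant $L\sqrt N$ gives, for every $t\geq 0$ and every $N>2$,
\bes
\mu\big(\{\,|h_N-\E h_N|\geq t\,\}\big)\;\leq\;\exp\!\Big(-\tfrac{Nt^2}{2(L\sqrt N)^2}\Big)\;=\;e^{-t^2/2L^2}.
\ees
By construction the law of $h_N$ under $\mu$ equals the law of $f$ under $\Pi_*\sigma_N$; in particular $\E h_N=\int f\,d(\Pi_*\sigma_N)\to\int f\,d\gamma_n=M$, the convergence being justified by weak convergence together with the uniform second moment bound and the (at most) linear growth of $f$.

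It then remains to pass to the limit $N\to\infty$. Fix $t>0$ and $\ve>0$. For all large $N$ one has $|\E h_N-M|<\ve$, hence $\{\,|f-M|\geq t\,\}\subseteq\{\,|f-\E h_N|\geq t-\ve\,\}$ and therefore $\Pi_*\sigma_N\big(\{|f-M|\geq t\}\big)\leq e^{-(t-\ve)^2/2L^2}$. Choosing $t$ outside the (at most countable) set of atoms of the law of $|f-M|$ under $\gamma_n$, the closed set $\{|f-M|\geq t\}$ has $\gamma_n$-null boundary, hence $\gamma_n\big(\{|f-M|\geq t\}\big)=\lim_N\Pi_*\sigma_N\big(\{|f-M|\geq t\}\big)\leq e^{-(t-\ve)^2/2L^2}$; letting $\ve\to0^+$ gives \eqref{eq:gaussian} for such $t$, and the general case follows by monotonicity, approximating an arbitrary $t_0$ by continuity points $t\uparrow t_0$. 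The step requiring the most care is precisely this last one: reconciling the $N$-dependent centering $\E h_N$ with the target mean $M$, and extracting a bound on $\gamma_n$ from the merely \emph{weak} (not setwise) convergence of $\Pi_*\sigma_N$; the geometric reductions preceding it are routine.
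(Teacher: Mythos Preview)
Your argument is correct. The geometric reductions (the Poincar\'e/Maxwell limit, the Lipschitz bookkeeping under projection and rescaling, the cancellation of the $N$'s in the exponent) are all fine, and your treatment of the passage to the limit --- splitting off the drift $|\E h_N-M|$, using Portmanteau on $\gamma_n$-continuity sets, and then recovering an arbitrary $t_0$ by approximating from below by continuity points --- closes the only genuinely delicate step.

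The route, however, is different from the paper's. The paper first invokes the Gaussian isoperimetric inequality to reduce the problem to dimension~$1$ (the extremal $1$-Lipschitz functions on $(\R^n,\gamma_n)$ depend on a single coordinate, exactly as in the spherical rearrangement argument), and only then runs a one-dimensional Poincar\'e limit: it analyses the density $g_N(\theta)=(2I_{N-2}\sqrt N)^{-1}\cos^{N-2}(\theta/\sqrt N)$ of $\sqrt N\,\Theta_N$ directly, showing pointwise convergence to the standard normal density together with an explicit dominating envelope $(2\pi)^{-1/2}e^{-\theta^2/6}$, so that dominated convergence gives convergence of all probabilities and moments at once. Your approach bypasses the Gaussian isoperimetric reduction entirely by running an $n$-dimensional Poincar\'e limit on $\sqrt N\,S^{N-1}$; the price is that you only get weak convergence and must argue via Portmanteau and continuity points, rather than having density-level domination that hands you everything uniformly. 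Either way is legitimate: the paper's version uses a heavier external ingredient (Gaussian isoperimetry) but then has an easier limit; yours is more self-contained but needs the extra measure-theoretic care you supplied.
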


\begin{remark} {\rm (i) When $M$ is the median of $f$, the bound \eqref{eq:gaussian} is an easy consequence of the Gaussian isoperimetric inequality \cite{Borell,ST} and the inequality $\gamma_1\big([t,\infty)\big) \leq \frac 12 e^{-t^2/2}$.  \ (ii) The Gaussian analogue of \eqref{eq:spherelipschitz2} follows from the log-Sobolev inequality via the so-called ``Herbst argument'' (see \cite{Ledoux} or \cite{ABMB}), but we couldn't easily find a reference giving the estimate \eqref{eq:gaussian}  for the two-sided deviation. Indeed, the most frequently cited concentration result of that kind is $\gamma(\left\{|f - \E f| \geq t  \right\}) \leq 2e^{-t ^2/2L^2}$.  While in our presentation Corollary \ref{cor:gaussian} appears as an afterthought, the ubiquity of the normal distribution makes this result potentially at least as useful as Theorems \ref{sphereconstants} and \ref{sphereconstants2}.   }   \qed
\end{remark} 

Our methods are based on refinements of known ideas, a lot of elementary calculus, and some numerics. It would be nice to find better reasons for the results. Hopefully, now that we know that the inequalities hold, someone will come up with a more conceptual proof. Concerning a potential self-contained argument, see the Remark at the end of subsection \ref{sec:one-sided}. Otherwise, there are many relevant techniques that we did not explore seriously, or did not know how to use (for starters: measure transportation and semigroup methods, see \cite{Ledoux}; covariance 
representations, see \cite{Houdre} and \cite{BD}; or the Curvature-Dimension-Diameter condition \cite{EMilman} in the context of Section \ref{products}), and we use only very weakly log-concavity of the marginals of $\mu$ (the measures $\nu=\nu_n$ defined by \eqref{def:nu}), so there is hope.

\section{Deviation from the median: proof of Theorem \ref{sphereconstants}} \label{proof2}
In this section we will sketch the proof of Theorem \ref{sphereconstants}, which, except for the calculus part, follows \cite{MS86}.  
The derivation of the second statement from the first one is standard and well-known (see, e.g., \cite{Ledoux} or \cite{MS86}). 
In fact the two statements are formally equivalent; here is a sketch of the argument.\footnote{Modulo minor modifications, this argument works in any metric probability space.}

First, if $f: S^{n-1} \rightarrow \R$ and $M$ is the median of $f$, 
then the set $A:= \{f\leq M\}$ verifies $\mu(A)\geq \frac{1}{2}$, so  \eqref{eq:sphereconstants} applies. Next, if $f$ is  $1$-Lipschitz, then $\{f > M+t\} \subset S^{n-1}\backslash A_t$, and so any lower bound on $\mu(A_t)$ implies an upper bound on $\mu(\left\{f > M+t \right\})$, which is exactly what we need. 
The second inequality in \eqref{eq:spherelipschitz}  and the case of general Lipschitz constant $L$ follow easily. 

In the opposite direction, if $\mu(A)\geq \frac{1}{2}$, define $\phi(x)= {\rm dist}(x,A)$. Then $\phi$ is $1$-Lipschitz, the median of $\phi$ is $0$, and we have  $\{\phi > t\} = S^{n-1}\backslash A_t$ (for any $t >0$). Again, this means that any upper bound on $\mu(\left\{\phi > t \right\})$ translates to a lower bound on $\mu(A_t)$, as needed.

From this point on we will concentrate on the estimate \eqref{eq:sphereconstants}. 
The spherical isoperimetric inequality guarantees that, 
given $\mu(A)$ and $t>0$,  the value of $\mu(A_t )$ is minimized when $A$ is a  \emph{spherical cap}.
Accordingly, we need only consider the case of a spherical cap $K\subset S^{n-1}$  with $\mu(K) = 1/2$ (i.e., a hemisphere). In other words, we need to show 
 \begin{proposition} \label{prop-main}
If $n>2$ and $x\in [0,\pi/2]$,  then 
\begin{equation} \label{eq:sphereconstants2} 
\mu(S^{n-1}\backslash K_x) \leq \frac{1}{2}e^{-nx^2/2}\ .
\end{equation}
 \end{proposition}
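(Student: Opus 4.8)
The plan is to reduce the inequality to an explicit one-variable estimate and then attack it by calculus. First I would set up coordinates: writing $K = K_0$ for the hemisphere $\{x_1 \le 0\}$ (say), the enlargement $K_x$ is the cap $\{x_1 \le \sin x\}$, so $S^{n-1}\setminus K_x = \{x_1 > \sin x\}$ is a cap of geodesic radius $\pi/2 - x$. Its measure is governed by the density of the first coordinate: if $\nu = \nu_n$ denotes the pushforward of $\mu$ under $y \mapsto y_1$, then $\nu$ has density proportional to $(1-s^2)^{(n-3)/2}$ on $[-1,1]$, and
\[
\mu(S^{n-1}\setminus K_x) = \frac{\int_{\sin x}^1 (1-s^2)^{(n-3)/2}\,ds}{\int_{-1}^1 (1-s^2)^{(n-3)/2}\,ds}.
\]
Equivalently, parametrizing by the geodesic angle $\theta$ from the pole, this is $\big(\int_0^{\pi/2 - x} (\cos\theta)^{n-2}\,d\theta\big)\big/\big(\int_0^{\pi/2}(\cos\theta)^{n-2}\,d\theta\big)$, after the substitution $s = \sin\theta$ shifted appropriately. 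So the task becomes: for $n > 2$ and $x \in [0,\pi/2]$,
\[
g(x) := \int_0^{\pi/2 - x}(\cos\theta)^{n-2}\,d\theta \ \le\ \frac{1}{2}\,e^{-nx^2/2}\int_0^{\pi/2}(\cos\theta)^{n-2}\,d\theta.
\]

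Next I would verify the endpoints and then argue monotonically. At $x = 0$ both sides equal $\tfrac12\int_0^{\pi/2}(\cos\theta)^{n-2}d\theta$ (using symmetry of the integrand about $\theta = \pi/2$... more precisely the full integral splits in half), so equality holds; at $x = \pi/2$ the left side is $0$ and the inequality is trivial. For the range in between, I would consider the ratio $R(x) := \mu(S^{n-1}\setminus K_x) \big/ \big(\tfrac12 e^{-nx^2/2}\big)$ and show $R(x) \le 1$, e.g. by showing $\log R$ is nonincreasing, i.e.
\[
\frac{d}{dx}\log \mu(S^{n-1}\setminus K_x) \ \le\ -nx.
\]
The left side is $-(\cos x)^{n-2}\big/\int_x^{\pi/2}(\cos\theta)^{n-2}\,d\theta$ (differentiating the tail integral in the $s=\sin\theta$ variable; the numerator comes from the density at $s = \sin x$, which is $(1-\sin^2 x)^{(n-3)/2}\cos x = (\cos x)^{n-2}$). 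So the required inequality is the clean-looking
\[
\int_x^{\pi/2}(\cos\theta)^{n-2}\,d\theta \ \le\ \frac{(\cos x)^{n-2}}{n x}\qquad (0 < x \le \pi/2).
\]
I would try to prove this by bounding $(\cos\theta)^{n-2} \le (\cos x)^{n-2} e^{-(n-2)(\text{something})(\theta - x)}$ using convexity/monotonicity of $\theta \mapsto \log\cos\theta$, or by the substitution $\theta = x + u$ and comparing with a Gaussian; a cruder route is to use $\cos\theta \le \cos x - (\sin x)(\theta - x) + \tfrac12(\theta-x)^2 \le \cos x\,e^{-(\tan x)(\theta-x) + O((\theta-x)^2)}$ and integrate. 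One has to be careful near $x = 0$, where $\tan x \approx x$ and the bound degenerates; there the correct comparison is with the Wallis-type asymptotics $\int_x^{\pi/2}(\cos\theta)^{n-2}d\theta \approx \int_x^\infty e^{-(n-2)\theta^2/2}d\theta$, and one checks $\int_x^\infty e^{- a\theta^2/2}d\theta \le e^{-ax^2/2}/(ax)$ — the standard Gaussian tail bound — which essentially gives the result with $n-2$ in place of $n$, so a little extra slack (from $(\cos\theta)^{n-2} \le e^{-(n-2)\theta^2/2}$ being lossy, or from handling $n=3$ separately) is needed to upgrade $n-2$ to $n$.

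The main obstacle, I expect, is exactly this last point: getting the constant in the exponent to be $n$ rather than $n-2$ uniformly over the whole interval $x\in[0,\pi/2]$ and all $n > 2$ (including the delicate small-$n$ cases $n=3,4$ and the regime $x$ near $\pi/2$ where the integrand is flattest relative to its value at $x$). The cheap inequality $\cos\theta \le e^{-\theta^2/2}$ only yields coefficient $n-2$, so one must either exploit a sharper pointwise bound on $(\cos\theta)^{n-2}/(\cos x)^{n-2}$ that accounts for the curvature of $\log\cos$, or split into cases (small $x$, moderate $x$, $x$ near $\pi/2$) and in each regime use a tailored estimate, verifying the small-$n$ instances by direct computation or interval arithmetic. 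This is presumably where "a lot of elementary calculus, and some numerics" enters.
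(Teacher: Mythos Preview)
Your setup (the integral formula for the cap measure, the reduction to showing $q_n(x)\le 1$ where $q_n(x):=\big(I_{n-2}\big)^{-1}\int_x^{\pi/2}\cos^{n-2}\theta\,d\theta\cdot e^{nx^2/2}$, and the endpoint checks) is correct and coincides with the paper's. The divergence is in how you attack $q_n\le 1$.

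You try to show that $q_n$ is \emph{decreasing in $x$} for each fixed $n\ge 3$, which is equivalent to
\[
\int_x^{\pi/2}\cos^{n-2}\theta\,d\theta \ \le\ \frac{\cos^{n-2}x}{n x}\qquad(0<x\le\pi/2).
\]
The paper instead shows that $q_n$ is \emph{decreasing in $n$} (in steps of two): using the cosine recurrence $\int\cos^n=\frac1n\cos^{n-1}\sin+\frac{n-1}n\int\cos^{n-2}$ together with the crude bound $\int_x^{\pi/2}\cos^{n-2}\le\frac{\cos^{n-1}x}{(n-1)\sin x}$, the inequality $q_{n+2}(x)\le q_n(x)$ collapses to the single clean pointwise fact $\cos x\le e^{-x^2/2}$. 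After that, only the two base cases $q_3\le1$ and $q_4\le1$ remain, and these are one-variable inequalities that can be checked analytically or numerically.

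The gap in your plan is precisely the one you flag at the end, but it is more serious than you make it sound. Your monotonicity-in-$x$ inequality must be proved \emph{for every $n\ge 3$ simultaneously}. The easy integration-by-parts bound $\int_x^{\pi/2}\cos^{n-2}\theta\,d\theta\le\frac{\cos^{n-1}x}{(n-1)\sin x}$ reduces your claim to $\frac{x}{\tan x}\le\frac{n-1}{n}$, which holds only for $x\gtrsim\sqrt{3/n}$; and the Gaussian comparison $\cos\theta\le e^{-\theta^2/2}$ gives the Mills-ratio bound with $n-2$ in the denominator, which is again off by exactly the factor $n/(n-2)$ you need. So neither of your proposed tools covers the regime $0<x\lesssim 1/\sqrt n$ uniformly in $n$, and ``verify small $n$ by direct computation'' does not help, because the troublesome regime is present for \emph{all} $n$. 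One can patch this (e.g., by a separate Gaussian-tail argument on the scale $u=x\sqrt n$), but you have not supplied it; by contrast, the paper's induction-in-dimension sidesteps the issue entirely, trading an infinite family of delicate $x$-inequalities for one well-known inequality plus two finite checks.
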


\begin{proof} 
We first note that $S^{n-1}\backslash K_x$ is again a spherical cap (whose radius in the geodesic distance is $r=\pi/2 -x$),  and so --  following \cite{MS86} -- the left hand side of \eqref{eq:sphereconstants2} can be rewritten as

\begin{equation} \label{cap}
\mu(S^{n-1}\backslash K_x) =  (2I_{n-2})^{-1}\int_x^{\pi/2} \cos^{n-2}\theta \, d\theta = (2 I_{n-2})^{-1}\int_0^{r} \sin^{n-2}\theta \, d\theta \ =: v(r),
\end{equation}
where  $I_m$ is the well-known Wallis integral
\be \label{I_m}
I_m:= \int_{0}^{\pi/2} \cos^m\theta \, d\theta .
\ee
[The precise value of $I_m$ is not important for the present argument, but for future reference we will cite 
some easy and well-known facts in Proposition \ref{wallis} at the end of this section.]
This means that the first assertion of  Theorem \ref{sphereconstants} is equivalent to the inequality
\begin{equation} \label{qn} 
q_n(x):= \frac{\int_x^{\pi/2} \cos^{n-2}\theta \, d\theta}{ I_{n-2}} e^{nx^2/2}  \leq 1 ,
\end{equation}
to be valid for $n>2$ and $x\in [0,\frac\pi 2]$. 
While numerical considerations suggest that, for each $x\in [0,\pi/2]$,  the sequence $(q_n(x))$ is nonincreasing,  
 in view of the recurrence formula 
\begin{equation} \label{rec}
\int\cos^n\theta \, d\theta =  \frac 1n \cos^{n-1}\theta \sin \theta + 
 \frac {n-1}n  \int \cos^{n-2}\theta \, d\theta ,
\end{equation}
it will be easier to compare $q_{n+2}(x)$ and $q_{n}(x)$.  Specifically, we will aim at proving that 
\be \label{n+2n}
q_{n+2}(x) \leq q_{n}(x),
\ee
 which 
simplifies to 
\begin{equation}  \label{nn-2}
\int_x^{\pi/2} \cos^n\theta \, d\theta 
\stackrel{?}{\leq}  e^{-x^2}\frac{I_n}{I_{n-2}}  \int_x^{\pi/2}  \cos^{n-2}\theta \, d\theta.
\end{equation}
Passing to the definite integrals   $\int_0^{\pi/2}$  in the  formula \eqref{rec} yields 
$\frac{I_n}{I_{n-2}}  =  \frac {n-1}n$. 
Substituting this value in \eqref{nn-2}, and further applying  the recurrence formula  \eqref{rec}  to the left hand side of \eqref{nn-2},  allows us to rewrite that inequality as an upper bound on  
$\int_x^{\pi/2}  \cos^{n-2}\theta  \, d\theta$, namely as
\begin{equation} \label{upper}
(n-1)(1-e^{-x^2}) \int_x^{\pi/2}  \cos^{n-2}\theta  \, d\theta \stackrel{?}{\leq}  \cos^{n-1}x \sin x \ .
\end{equation}
The cosine integral appearing above can be upper-bounded as follows: 
$$
\int_x^{\pi/2}  \cos^{n-2}\theta   \, d\theta \leq \frac 1 {\sin x} \int_x^{\pi/2}  \cos^{n-2} \theta \sin \theta  \, d\theta = \frac{1}{n-1}\frac{\cos^{n-1} x}{\sin x}\ .
$$
Applying this bound to the left hand side of \eqref{upper}, we see that it is now sufficient to show that 
\begin{equation}\label{ineq}
(1-e^{-x^2}) \stackrel{?}{\leq}  \sin^2 x\ \ \ \hbox{or }\ \ \  \cos x \stackrel{?}{\leq}  e^{-x^2/2}
\end{equation}
for $x\in[0,\pi/2]$, which is a well known inequality used often in analysis. This inequality can be validated in many ways; for example, one may consider the power series of both sides, or take logarithm of both sides and repeatedly differentiate. 

To recapitulate, we have shown up to now that, for each $x\in [0,\pi/2]$,  the sequences  $(q_{2n}(x))$ and  
$(q_{2n+1}(x))$ are nonincreasing.
Thus, to deduce \eqref{qn},  we only need to establish that, for  $x\in[0,\pi/2]$, $q_3(x)\leq 1$ and $q_4(x)\leq 1$. 
(The failure of $q_2(x)\leq 1$ on some subinterval  $(\a,\beta)\subset [0,\pi/2]$ was the reason 
why the case $n=2$ had to be excluded from Theorem \ref{sphereconstants} 
and analyzed separately.) 
From \eqref{qn}, $q_3(x)$ and $q_4(x)$ are given by the following
\begin{align}
q_3(x) &= (1-\sin x) e^{3x^2/2}, \label{q3}\\
q_4(x) &= \frac{1}{\pi}\left(\pi - 2x - 2\cos x \sin x \right)e^{2x^2} .
\end{align}
The inequalities $q_3(x)\leq 1$ and $q_4(x)\leq 1$ 
can now be verified numerically or graphically, see Figure \ref{theplot}. 
Note that the only points where $q_3(x)$ or $q_4(x)$ is even close to $1$ are near $x=0$ 
(for which we have equality), but since $q_3'(0)=-1$ and $q_4'(0)=-\frac{4}{\pi}$, we can be 
sure that the inequalities hold when $x$ is close to $0$. 
(Note that, additionally,  we know that $q_4(x) \leq q_2(x)$, so that $q_4(x)\leq 1$ 
for sure holds outside the short interval $(\a,\beta)$ identified earlier.) 
\begin{figure}[ht!]
\includegraphics[width=0.8\textwidth]{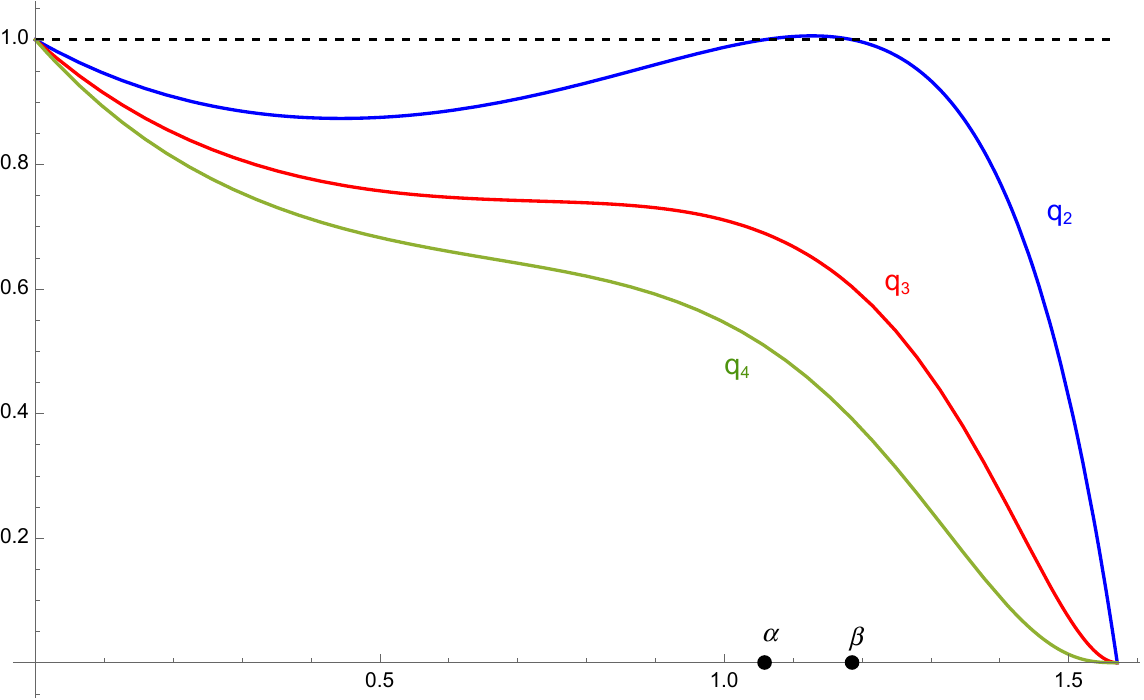}
\caption{The plots of $q_2$, $q_3$, and $q_4$. The bound $q_2\leq 1$ is not valid on some interval $(\a,\beta )\approx (1.05858, 1.18588)$, but $q_4,  q_3\leq 1$ (analytically) and  $q_4\leq q_3$, $q_3\leq q_2$ (numerically).} 
\label{theplot}
\end{figure}

Alternatively, we can analyze 
the inequalities by analytic means using the same techniques as those employed earlier 
in the proof of \eqref{ineq}.  
However, the argument is more involved and at some stage numerics seem necessary. 
For example, 
taking the logarithm of both sides of $q_3(x)\leq 1$ leads to 
\[
\ln(1-\sin x)\leq\frac{-3x^2}{2} .
\]
We again have equality when $x=0$ so we look at the derivatives and are led to 
\[
g(x):= 3x - \frac{\cos x}{1-\sin x}\leq 0
\]
(that is, if the above inequality holds in $(0,\pi/2)$, then so does the previous one and we are done). 
By direct calculation, $g'(x) = \frac{2-3\sin x}{1-\sin x}$, and so it is apparent 
that $g$ has a unique maximum in $[0,\pi/2)$ at $x_0=\arcsin \frac 23$. 
It remains to verify that $g(x_0) \approx -0.046885 < 0$, as needed.  \end{proof}

\begin{remark} {\rm  It is possible to generalize the above argument to arrive at estimates involving  $\frac{1}{2}e^{-t ^2(n+\xi)/2}$, where $\xi>0$. 
We cannot expect such estimates to hold for all $n$ :  we have already seen above that, even in the case $\xi=0$, $n$ must be greater than $2$. 
However, they may be true for $n\geq n(\xi)$. Bounds of this type are relevant to improving constants in isoperimetric/concentration inequalities on $\big(S^{n-1}\big)^k$ for $k >1$, the topic that is explored in Section \ref{products}. } \qed
 \end{remark} 
 
 For future reference, we will present here other closely related bounds for the quantities appearing in Theorem \ref{sphereconstants}.  For simplicity, we will state them only as an upper bound for the volume of spherical caps; estimates of the form \eqref{eq:sphereconstants}  and \eqref{eq:spherelipschitz} follow then in the usual way. 
 
 \begin{proposition} \label{prop-cap}
For $0\leq r \leq \pi/2$, the normalized volume of a spherical cap of geodesic radius $r$ in $S^{n-1}$ 
satisfies 
\be \label{cap-cos}
v(r) \leq  \frac 12  \sin^{n-1} r 
\ee
and, for $0\leq r < \pi/2$, 
\be \label{cap-cos2}
v(r) \leq  (\sqrt{2\pi}\, \kappa_n \cos r)^{-1} \sin^{n-1} r, 
\ee
where $\kappa_n = \frac{\sqrt{2 } \; \Gamma \left(\frac{n+1}{2}\right)}{\Gamma \left(\frac{n}{2}\right)}$. Moreover, the ratio $v(r)/ \sin^{n-1} r $ is increasing on $(0,\pi/2]$.
 \end{proposition}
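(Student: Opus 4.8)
The plan is to prove the monotonicity statement first, since it is the engine that drives everything: both displayed bounds will follow from it together with a single elementary estimate. Throughout, write $F(r):=\int_0^r \sin^{n-2}\theta\,d\theta$, so that \eqref{cap} reads $v(r)=F(r)/(2I_{n-2})$, and set $g(r):=v(r)/\sin^{n-1}r=F(r)/(2I_{n-2}\sin^{n-1}r)$ on $(0,\pi/2]$.

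The one elementary input is the following. Since $\cos\theta$ is decreasing on $[0,\pi/2]$, we have $\cos\theta\geq\cos r$ for $\theta\in[0,r]$, and therefore
\[
F(r)\;\leq\;\frac{1}{\cos r}\int_0^r \sin^{n-2}\theta\cos\theta\,d\theta\;=\;\frac{\sin^{n-1}r}{(n-1)\cos r}\qquad(0\leq r<\pi/2),
\]
equivalently $(n-1)\cos r\,F(r)\leq\sin^{n-1}r$, an inequality that also holds (trivially) at $r=\pi/2$. Differentiating $g$ (with $F'(r)=\sin^{n-2}r$) gives
\[
g'(r)=\frac{1}{2I_{n-2}}\cdot\frac{\sin^{n-1}r-(n-1)\cos r\,F(r)}{\sin^{n}r},
\]
whose numerator is $\geq 0$ by the displayed estimate; this is precisely the asserted monotonicity of $v(r)/\sin^{n-1}r$ on $(0,\pi/2]$ (in fact the estimate is strict for $r\in(0,\pi/2)$, so $g$ is strictly increasing).

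Given the monotonicity, \eqref{cap-cos} is immediate: a spherical cap of geodesic radius $\pi/2$ is a hemisphere, so $v(\pi/2)=\tfrac12$ (equivalently $F(\pi/2)=I_{n-2}$), whence $g(r)\leq g(\pi/2)=\tfrac12$ for $r\in(0,\pi/2]$, and $r=0$ is trivial. For \eqref{cap-cos2} I would instead invoke the elementary estimate directly: for $r\in[0,\pi/2)$,
\[
v(r)=\frac{F(r)}{2I_{n-2}}\leq\frac{\sin^{n-1}r}{2(n-1)I_{n-2}\cos r},
\]
so only the constant needs to be identified. Using the standard Wallis/Beta evaluation $I_{n-2}=\tfrac{\sqrt\pi}{2}\,\Gamma\!\big(\tfrac{n-1}{2}\big)/\Gamma\!\big(\tfrac n2\big)$ together with $\Gamma\!\big(\tfrac{n+1}{2}\big)=\tfrac{n-1}{2}\,\Gamma\!\big(\tfrac{n-1}{2}\big)$, one checks that both $2(n-1)I_{n-2}$ and $\sqrt{2\pi}\,\kappa_n$ equal $(n-1)\sqrt\pi\,\Gamma\!\big(\tfrac{n-1}{2}\big)/\Gamma\!\big(\tfrac n2\big)$, which completes the argument.

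I do not expect a real obstacle: the crux is the single observation that $\cos\theta\geq\cos r$ on $[0,r]$ simultaneously yields the monotonicity of $g$ and the bound \eqref{cap-cos2}, while \eqref{cap-cos} is just monotonicity evaluated at the endpoint $r=\pi/2$. The only point requiring (brief) care is the routine Gamma-function bookkeeping showing $2(n-1)I_{n-2}=\sqrt{2\pi}\,\kappa_n$.
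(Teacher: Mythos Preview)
Your proof is correct and follows essentially the same line as the paper's: both arguments hinge on the single observation that $\cos\theta\geq\cos r$ on $[0,r]$, which yields the monotonicity of $v(r)/\sin^{n-1}r$ and hence \eqref{cap-cos} by evaluating at the endpoint $r=\pi/2$. The only cosmetic difference is that you compute $g'(r)$ directly and read off the sign of the numerator, whereas the paper packages the same comparison as an abstract ``ratio of integrals'' lemma (if $h$ is nonincreasing then $x\mapsto\int_a^x fh/\int_a^x f$ is nonincreasing, applied with $f=\sin^{n-2}\theta$, $h=\cos\theta$). For \eqref{cap-cos2} you actually go further than the paper, which simply cites an outside reference; your derivation via $F(r)\leq\sin^{n-1}r/((n-1)\cos r)$ together with the Gamma identity $2(n-1)I_{n-2}=\sqrt{2\pi}\,\kappa_n$ is clean and self-contained.
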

 \begin{remark} {\rm The inequality \eqref{cap-cos} is equivalent to $\mu(S^{n-1}\backslash K_x) \leq  \frac 12  \cos^{n-1} x$, where $x= \pi/2 -r$, which is superior to \eqref{eq:sphereconstants2} except for $x$ close to $0$. Next, $n-1= \dim S^{n-1}$ is the best exponent that we may possibly expect since, for small $r$, the volume of  an $r$-cap scales as $r^{n-1}$.  Finally, since $\kappa_n > \sqrt{n - \frac 12}$, the bound \eqref{cap-cos2} is superior to \eqref{cap-cos} except for $r$ very close to $\pi/2$; this will be exploited in the proof of inequality \eqref{eq:spherelipschitz3} from Theorem \ref{sphereconstants2}. } \qed
 \end{remark} 
 \begin{proof}  First, it is apparent that we have equality in \eqref{cap-cos} when $r=0$ and $r=\pi/2$. 
 Accordingly, \eqref{cap-cos} will follow once we prove the last statement, i.e., $r \to v(r)/ \sin^{n-1} r $ being increasing on $[0,\pi/2]$.  To that end, recall that (cf.\ \eqref{cap}) 
 $$
 v(r) = (2I_{n-2})^{-1}\int_0^{r} \sin^{n-2}\theta \, d\theta , 
 $$
 while $ \sin^{n-1} r  = (n-1) \int_0^{r} \sin^{n-2}\theta \cos \theta\, d\theta$.  The conclusion follows now immediately  from the following elementary fact.
 \begin{lemma} Let $f : (a,b) \to (0,\infty)$ be integrable and let $h : [a,b) \to \R$ be nonincreasing. Then 
 the function $x \to \frac{\int_a^x fh}{\int_a^x f}$ is nonincreasing. 
 \end{lemma}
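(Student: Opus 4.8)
The plan is to prove the elementary lemma directly, reducing it to a single-variable monotonicity computation. Write $F(x) = \int_a^x f$ and $G(x) = \int_a^x fh$, both defined for $x \in (a,b)$, with $F > 0$ and $F' = f > 0$ a.e.\ on $(a,b)$. I want to show $\psi := G/F$ is nonincreasing. Since $F$ is absolutely continuous and strictly increasing, it suffices to show that $\psi' \leq 0$ wherever it exists, i.e.\ that $G'F - GF' \leq 0$, which reads $f(x)F(x)h(x) \leq f(x) G(x)$, i.e. (dividing by $f(x) > 0$) that $h(x) F(x) \leq G(x)$ for a.e.\ $x \in (a,b)$.

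The key observation is that $h(x)F(x) = \int_a^x f(t) h(x)\, dt$ and $G(x) = \int_a^x f(t) h(t)\, dt$, so the desired inequality is $\int_a^x f(t)\big(h(t) - h(x)\big)\, dt \geq 0$. This holds pointwise in $t$: for $t \in (a,x)$ we have $t \leq x$, hence $h(t) \geq h(x)$ because $h$ is nonincreasing, and $f(t) > 0$; so the integrand is nonnegative and the integral is nonnegative. This gives $\psi'(x) \leq 0$ a.e., and since $\psi$ is locally absolutely continuous on $(a,b)$ (quotient of an absolutely continuous function by one that is absolutely continuous and bounded below by a positive constant on compact subintervals), monotonicity of $\psi$ follows. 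One should also check the endpoint behaviour: the claim is stated on $[a,b)$, so one extends by noting $\lim_{x \to a^+}\psi(x)$ exists in $[-\infty,\infty]$ (indeed equals $h(a)$ if $h$ is continuous at $a$, by the mean value / squeeze estimate $\inf_{[a,x]} h \leq \psi(x) \leq \sup_{[a,x]} h$), and the monotonicity on $(a,b)$ passes to the closure.

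An alternative, slicker route avoids differentiation entirely and is perhaps cleaner to write: for $a < x < y < b$ one wants $G(x)F(y) \geq G(y)F(x)$, which after writing $G(y) = G(x) + \int_x^y fh$ and $F(y) = F(x) + \int_x^y f$ and cancelling reduces to $G(x)\int_x^y f \geq F(x)\int_x^y fh$, i.e. $\int_a^x\!\int_x^y f(s)f(t)\big(h(s) - h(t)\big)\,dt\,ds \geq 0$; and for $s < x < t$ we have $h(s) \geq h(t)$, so the integrand is nonnegative. This is just the ``Chebyshev sum inequality'' / FKG-type argument and handles the $[a,b)$ statement and non-strict monotonicity of the quotient without any regularity fuss.

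I do not anticipate a real obstacle here — the statement is genuinely elementary. The only point requiring a little care is the degenerate case where $\int_x^y f = 0$ for some subinterval (impossible here since $f>0$, but if one wanted the sharper hypothesis $f\ge 0$ one would argue that $F$ need not be strictly increasing and the quotient argument via the double integral still works verbatim), and the treatment of the left endpoint $a$, where $\psi$ may not extend continuously unless $h$ is; in the application $f(\theta)=\sin^{n-2}\theta$ and $h(\theta) = 1/\big((n-1)\cos\theta\big)$ are both continuous on $[0,\pi/2]$, so no issue arises. I would write up the double-integral version as the main argument and remark that it also yields strict monotonicity when $h$ is strictly decreasing on a set of positive measure, which is what is actually used to conclude that $v(r)/\sin^{n-1} r$ is (strictly) increasing on $(0,\pi/2]$ and hence that \eqref{cap-cos} is strict in the open interval.
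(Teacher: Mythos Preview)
Your proof is correct; both the derivative computation and the double-integral (Chebyshev/FKG-type) argument are valid and standard. The paper itself does not actually supply a proof of this lemma --- it merely states it as an ``elementary fact'' inside the proof of Proposition~\ref{prop-cap} and moves on --- so there is nothing to compare against, and your write-up would fill that gap nicely.

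One small slip in your aside about the application: the relevant identification is $f(\theta)=\sin^{n-2}\theta$ and $h(\theta)=\cos\theta$ (which is nonincreasing on $[0,\pi/2]$), so the lemma gives that $\dfrac{\int_0^r \sin^{n-2}\theta\cos\theta\,d\theta}{\int_0^r \sin^{n-2}\theta\,d\theta}$ is nonincreasing, and taking the reciprocal shows $v(r)/\sin^{n-1}r$ is increasing. Your choice $h(\theta)=1/\big((n-1)\cos\theta\big)$ is increasing, not nonincreasing, so it does not fit the hypotheses directly.
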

\noindent Let us note that the Lemma was stated here with the hypotheses fitting the current setting, but it remains true under any reasonable assumptions that assure the quantities in question are well defined. 

We skip the proof of \eqref{cap-cos2}; it can be found, together with  
a geometric proof of \eqref{cap-cos} and a few similar or related estimates,  in Section 5.1.2 and Appendix A  of \cite{ABMB}. Similar bounds were also established, e.g.,  in  Lemma 2.1 of \cite{BGKKLS}. 
 \end{proof} 

 Finally, let us state for future reference some easy and well-known facts concerning the Wallis integral defined in \eqref{I_m}. 
 
  \begin{proposition} \label{wallis}
If  $I_m = \int_{0}^{\pi/2} \cos^m\theta \, d\theta$ then  \\
{\rm (i)} $I_m = \frac{\sqrt{\pi }\, \Gamma \left(\frac{m+1}{2}\right)}{2 \Gamma \left(\frac{m}{2}+1\right)}$\  for \ $m\geq 0$, \\
{\rm (ii)} $\sqrt{\frac{\pi}{2m+2}}\leq I_m \leq \sqrt{\frac{\pi}{2m+1}}$  \ for \ $m\geq 1$.\\
{\rm (iii)}  We have $I_m= \sqrt{\frac{\pi}2}\, \kappa_{m+1}^{-1}$, where $\kappa_s$ is as in Proposition \ref{prop-cap}.  The sequence $\left(\frac{\kappa_s}{\sqrt{s}}\right)$ is increasing to $1$, and so  $\big(I_{m-1} \sqrt{m}\big)$ and $\big(I_{m-2} \sqrt{m}\big)$ both decrease to  $\sqrt{\frac\pi 2}$. 
  \end{proposition}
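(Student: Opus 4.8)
The plan is to derive everything from the integral definition of $I_m$ together with the recurrence \eqref{rec}; the only genuinely delicate point is the sharp constant $2m+1$ in the upper estimate of part~(ii). For (i): the substitution $u=\sin^2\theta$ rewrites $I_m$ as a Beta integral, $I_m=\tfrac12 B\!\left(\tfrac12,\tfrac{m+1}{2}\right)=\dfrac{\Gamma\!\left(\tfrac12\right)\Gamma\!\left(\tfrac{m+1}{2}\right)}{2\,\Gamma\!\left(\tfrac m2+1\right)}$ (valid for all real $m\ge0$), and $\Gamma(1/2)=\sqrt\pi$ gives the stated formula. Alternatively one inducts separately over even and odd integer $m$: the base cases $I_0=\pi/2$ and $I_1=1$ match, and the definite form $I_m=\tfrac{m-1}{m}I_{m-2}$ of \eqref{rec} mirrors $\Gamma(z+1)=z\Gamma(z)$ in numerator and denominator.

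The engine for (ii) and (iii) is three elementary properties of the sequence $(I_m)$. It is strictly decreasing, since $0\le\cos\theta<1$ on $(0,\pi/2)$. It is log-convex in $m$, i.e.\ $I_m^2\le I_{m-1}I_{m+1}$ — equivalently, $m\mapsto I_{m+1}/I_m$ is nondecreasing — which follows by writing $\cos^m\theta=\cos^{(m-1)/2}\theta\,\cos^{(m+1)/2}\theta$ and applying Cauchy--Schwarz. And it satisfies the product identity $I_mI_{m+1}=\tfrac{\pi}{2(m+1)}$, because \eqref{rec} gives $(k+1)I_kI_{k+1}=kI_{k-1}I_k$, so $(k+1)I_kI_{k+1}$ is independent of $k$ and hence equals its value $\tfrac\pi2$ at $k=0$. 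Sandwiching $I_m$ between $I_{m+1}$ and $I_{m-1}$ in this identity gives the crude bounds $\sqrt{\pi/(2m+2)}\le I_m\le\sqrt{\pi/(2m)}$, and in particular shows that the ratios $I_{k-1}/I_k\ (\ge1)$ decrease to~$1$.

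Part (iii) now follows quickly. The identity $I_m=\sqrt{\pi/2}\,\kappa_{m+1}^{-1}$ is a rewriting of (i) using $\Gamma\!\left(\tfrac m2+1\right)=\tfrac m2\,\Gamma\!\left(\tfrac m2\right)$ and the definition of $\kappa_s$. Using the product identity one computes $(I_{m-1}\sqrt m)^2=\tfrac\pi2\cdot\tfrac{I_{m-1}}{I_m}$ and $(I_{m-2}\sqrt m)^2=\tfrac\pi2\cdot\tfrac{m}{m-1}\cdot\tfrac{I_{m-2}}{I_{m-1}}$; since (by log-convexity) the ratios $I_{k-1}/I_k$ are nonincreasing to~$1$ and $m/(m-1)$ is nonincreasing to~$1$, both right-hand sides are nonincreasing sequences tending to $\tfrac\pi2$, whence $I_{m-1}\sqrt m$ and $I_{m-2}\sqrt m$ decrease to $\sqrt{\pi/2}$, and then $\kappa_m/\sqrt m=\sqrt{\pi/2}\,/(I_{m-1}\sqrt m)$ increases to~$1$. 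The lower bound in (ii) is equally direct: $I_m^2\ge I_mI_{m+1}=\pi/(2m+2)$.

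The one real obstacle is the upper bound $I_m\le\sqrt{\pi/(2m+1)}$ of part~(ii) — equivalently $\kappa_{m+1}\ge\sqrt{m+\tfrac12}$, equivalently that $(2m+1)I_m^2$ is nondecreasing (its limit being $\pi$ by the crude bounds), i.e.\ $(I_{m+1}/I_m)^2\ge\tfrac{2m+1}{2m+3}$. Here log-convexity alone is not enough: it yields only $I_m\le\sqrt{\pi/(2m)}$, and even the refinement $I_m^4\le(I_{m-1}I_m)(I_mI_{m+1})=\pi^2/\big(4m(m+1)\big)$ gives $I_m^2\le\pi/\big(2\sqrt{m(m+1)}\,\big)$, which still overshoots the target because $\sqrt{m(m+1)}<m+\tfrac12$. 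This is a classical sharp Wallis-type inequality; I would obtain it from the standard monotonicity argument for the Gamma ratio $\Gamma(x+1)/\Gamma(x+1/2)$ (equivalently, from the known bound $\Gamma(x+1)/\Gamma(x+1/2)>\sqrt{x+\tfrac14}$), or else simply cite it from the sources already invoked for the $\kappa_n$ estimates. Everything else in the Proposition is routine bookkeeping around \eqref{rec}.
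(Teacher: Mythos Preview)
Your proposal is correct. In fact, the paper does not give a proof of this Proposition at all: it merely states that these are ``easy and well-known facts'' and refers the reader to Section 5.1.2 and Appendix A of \cite{ABMB}. So your write-up is substantially more detailed than anything appearing in the paper.

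A few remarks on the comparison. Your derivation of (i) via the Beta integral, the product identity $I_mI_{m+1}=\pi/(2(m+1))$ via the telescoping of the recurrence, and the log-convexity via Cauchy--Schwarz are all standard and correct; together they give the lower bound in (ii) and all of (iii) cleanly. Your computations $(I_{m-1}\sqrt{m})^2=\tfrac{\pi}{2}\,I_{m-1}/I_m$ and $(I_{m-2}\sqrt{m})^2=\tfrac{\pi}{2}\cdot\tfrac{m}{m-1}\cdot I_{m-2}/I_{m-1}$, combined with the monotonicity of $I_{k-1}/I_k$ coming from log-convexity, are a nice self-contained route to the monotonicity claims in (iii). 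You correctly identify that the sharp upper bound $I_m\le\sqrt{\pi/(2m+1)}$ in (ii) does not follow from log-convexity and the product identity alone (your observation that $I_m^2\le\pi/(2\sqrt{m(m+1)})$ falls just short is exactly right), and your decision to cite it as a classical Wallis/Gamma-ratio inequality is entirely in the spirit of the paper, which cites the same source for all of these estimates anyway.
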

\noindent  For these (and other, tighter and two-sided) estimates we refer the reader to, for example, Section 5.1.2 and Appendix A in \cite{ABMB}.

\section{Deviation from the mean: proofs of Theorem \ref{sphereconstants2} and Corollary \ref{cor:gaussian}}
In this section we will address the one-sided and the two-sided problem for the deviation from the mean:  {\em If $f$ is $1$-Lipschitz and $t\geq0$, what are the bounds for 
$ \mu(\{f \geq \E f +t\}) $ and for $ \mu(\{|f - \E f | \geq t\}) $?}  The bulk of the argument will be devoted to the spherical case (Theorem  \ref{sphereconstants2}); at the very end we will make a few comments about deducing the Gaussian result (Corollary \ref{cor:gaussian}). 

As  in the previous section, the proof of Theorem \ref{sphereconstants2} splits into two parts:  
(i) identifying extremal instances for the problem at hand and 
(ii) obtaining tight estimates satisfied by those extremal instances.   
In the case  of Theorem \ref{sphereconstants}, the extremal objects were the {\em spherical cap} $K$ of measure $\frac 12$, i.e., a hemisphere 
(in the context of \eqref{eq:sphereconstants}) and the function $\phi(x)= {\rm dist}(x,K)$ (in the context of  \eqref{eq:spherelipschitz}).  
This will not suffice in the present setting since $\E f$ depends on the entire distribution of $f$, i.e., on the values of $ \mu(\{f \geq t\})$ 
for all $t\in \R$, and not only on the values of $t$ for which that measure is $\frac 12$. 
However, allowing caps of arbitrary measure leads to a sufficiently rich family of functions, which we will describe in a moment. 

\subsection{The one-sided problem : Proof of \eqref{eq:spherelipschitz2}} \label{sec:one-sided} \ 
We will start with the following simple lemma. 
\begin{lemma} \label{extreme} 
Let $\nu$ be a probability measure on $\R$ such that $\int |x| d\nu(x) < \infty$. 
Next, let $\mathcal{L}$ be the set of functions from $\R$ to $\R$ that are $1$-Lipschitz and nondecreasing. 
For $t > 0$,  consider the optimization problem
\begin{equation} \label{opt}
\sup_{\psi \in \mathcal{L}}\ \nu\big(\{\psi \geq \E \psi +t\}\big),  
\end{equation}
where $\E$ stands for the expected value in the probability space $(\R, \nu)$.  Then, for each $t$, it is enough to restrict the supremum to the subfamily of $\mathcal{L}$ consisting of functions of the form 
\be \label{phi_a}
\phi_a(\theta) = \left\{ \begin{array}{cl}\theta&{\rm if } \quad \theta \leq a\\
a&{\rm if } \quad \theta \geq a 
\end{array} \right. ,
\ee
where  $a \in \R$ is a parameter.  Moreover, for each $a$ it is sufficient to consider $t= t(a):= a-\E \phi_a$. 
That is, if $\lambda : (0, \infty)\to [0,1]$ is a nonincreasing function satisfying $\nu(\{\phi_a \geq \E \phi_a + t(a)\}) \leq \lambda(t(a))$ for all $a\in \R$, then it is also true that $\nu(\{\psi \geq \E \psi + t\}) \leq \lambda(t)$ for all $t>0$  and all $\psi\in \mathcal{L}$.
\end{lemma}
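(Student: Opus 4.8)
The plan is to fix an arbitrary $\psi \in \mathcal{L}$ and an arbitrary $t > 0$ and to exhibit a single parameter $a = a(\psi,t) \in \R$ such that the superlevel set $\{\psi \geq \E \psi + t\}$ coincides, \emph{as a subset of $\R$}, with $\{\phi_a \geq \E \phi_a + t(a)\}$, while at the same time $t(a) \geq t$. Granting this, the hypothesis on $\lambda$ gives
\[
\nu\big(\{\psi \geq \E \psi + t\}\big) = \nu\big(\{\phi_a \geq \E \phi_a + t(a)\}\big) \leq \lambda\big(t(a)\big) \leq \lambda(t),
\]
the last inequality because $\lambda$ is nonincreasing; this is exactly the asserted conclusion. (The informal claim that the supremum in \eqref{opt} may be restricted to the $\phi_a$'s then follows too: each $\phi_a$ lies in $\mathcal{L}$, and conversely $\nu(\{\psi \geq \E\psi + t\}) = \nu(\{\phi_a \geq \E\phi_a + t(a)\}) \leq \nu(\{\phi_a \geq \E\phi_a + t\})$ since $t(a)\geq t$.)

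First I would pin down the shape of $A := \{\psi \geq \E \psi + t\}$. As $\psi$ is $1$-Lipschitz it is continuous, and as it is nondecreasing $A$ is upward closed; being also closed, $A$ is either empty (nothing to prove), all of $\R$ — which is impossible, since integrating $\psi \geq \E \psi + t$ against $\nu$ forces $t \leq 0$ (note $\E \psi$ is finite because $|\psi(\theta)| \leq |\psi(0)| + |\theta|$ and $\int |x|\,d\nu(x) < \infty$) — or a half-line $A = [c,\infty)$ with $c \in \R$, in which case continuity of $\psi$ at $c$ yields $\psi(c) = \E \psi + t$. The right choice is then $a := c$: for $\phi_c(\theta) = \min(\theta, c)$ one has $\{\phi_c \geq c\} = [c,\infty) = A$, and, since $t(c) = c - \E \phi_c$ by definition, the threshold $\E\phi_c + t(c)$ equals $c$, so $\{\phi_c \geq \E \phi_c + t(c)\}$ is the same set $A$. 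Hence the two measures in the display above are literally equal, and everything comes down to the inequality $t(c) \geq t$.

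That inequality rearranges to $\E\big[\phi_c(\theta) - \psi(\theta)\big] \leq c - \psi(c)$, and I would obtain it by integrating the pointwise estimate $\phi_c(\theta) - \psi(\theta) \leq c - \psi(c)$, valid for every $\theta \in \R$: for $\theta \geq c$ it reads $c - \psi(\theta) \leq c - \psi(c)$, true since $\psi$ is nondecreasing; for $\theta \leq c$ it reads $\theta - \psi(\theta) \leq c - \psi(c)$, true since the map $\theta \mapsto \theta - \psi(\theta)$ is nondecreasing, $\psi$ being $1$-Lipschitz. This is the one place where both structural hypotheses on $\psi$ get used. Since $t(c) \geq t > 0$, the hypothesis on $\lambda$ does apply at $a = c$, and we conclude $\nu(A) = \nu(\{\phi_c \geq \E \phi_c + t(c)\}) \leq \lambda(t(c)) \leq \lambda(t)$.

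I do not anticipate a genuine obstacle here: the argument is a rearrangement-type reduction rather than a quantitative estimate, and it uses nothing about $\nu$ beyond $\int|x|\,d\nu(x) < \infty$ — in particular, no log-concavity. The only points needing care are the bookkeeping of the degenerate cases (namely $A = \emptyset$ and $A = \R$, together with the remark that $t(c) > 0$ comes for free once $t(c) \geq t$ is known) and the recognition that the truncation level must be taken precisely at the point $c$ where $\psi$ attains the value $\E\psi + t$, so that the two superlevel sets agree on the nose and not merely up to a $\nu$-null set.
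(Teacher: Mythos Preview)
Your proposal is correct and follows essentially the same approach as the paper: identify the threshold point $c$ where $\psi(c)=\E\psi+t$, take $a=c$, and show $t(c)\geq t$ via a pointwise comparison between $\psi$ and $\phi_c$. The only cosmetic difference is that the paper first normalizes so that $\psi(c)=c$ and then proves $\psi\geq\phi_c$ pointwise, whereas you keep the constant and prove $\phi_c-\psi\leq c-\psi(c)$ pointwise; these are the same inequality.
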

\begin{proof} 
Denote $U(\psi,t) := \{\psi \geq \E \psi +t\}$. For the first assertion of the Lemma, we need to show that, for $t > 0$, 
\begin{equation} \label{opt2}
\sup_{\psi \in \mathcal{L}}\ \nu\left(U(\psi,t) \right) \leq  \sup_{a \in \R}\ \nu\left(U(\phi_a,t) \right), 
\end{equation}
the converse inequality being trivial.   
 To that end, fix $\psi \in \mathcal{L}$ and $t > 0$.  
 Since $\psi$ is nondecreasing an continuous, the set $U(\psi,t)$ is either empty, or of the form $[a_0,\infty)$ for some $a_0\in \R$,  and necessarily $f(a_0)=\E \psi +t$. 
 Next, since adding a constant to any function $\zeta$ doesn't change the set $U(\zeta,t)$, we may just as well assume that $\psi(a_0)=a_0$. 

\smallskip Consider now the function $\phi_{a_0}$ as defined by \eqref{phi_a}. Then 

\smallskip\noindent
$\bullet$ $\psi(x) \geq \phi_{a_0}(x) = a_0$  for $x\geq a_0$ with equality for $x=a_0$  (by  construction)\\
$\bullet$ $\psi(x) \geq \phi_{a_0}(x) = x$  for $x \leq   a_0$ (because $\psi$ is $1$-Lipschitz).  \\
(The reader is advised to draw a picture.) 
Thus $\psi \geq \phi_{a_0}$ everywhere on  $\R$ and it follows in particular that $\E\phi_{a_0} \leq \E \psi$. 
Consequently, if we choose $t_0$ so that $\E\phi_{a_0} +t_0=\E \psi +t$, then $t_0\geq t$. 
On the other hand, $U(\phi_{a_0},t_0) = \{\phi_{a_0} \geq a_0\}=[a_0,\infty)=U(\psi,t)$, and since for any $g$ and any $t\leq t_0$ 
we obviously have $U(g,t_0) \subset U(g,t)$, it follows that 
\[
\nu(U(\phi_{a_0},t)) \geq \nu(U(\phi_{a_0},t_0)) = \nu(U(\psi,t)). 
\]
Since $\psi \in \mathcal{L}$ was arbitrary, \eqref{opt2} follows. 
The second assertion of the Lemma is a consequence of the fact that to upper-bound $\nu(U(\psi,t))$ we only need information about 
$U(\phi_{a_0},t_0)$ for some $t_0\geq t$, wiith the equality $t_0=t(a_0)$ being implicit in the definition of the latter set. 
\end{proof} 
\begin{remark} {\rm  The second assertion of Lemma \ref{extreme} can be strengthened and simplified as follows}:  For any $t>0$, the supremum from \eqref{opt} is {\em attained} for some $\psi=\phi_a$ with $a$ verifying $t= a-\E \phi_a$. {\rm We stated the weaker version since it is sufficient for our purposes and easier to prove, but since the Lemma may be of independent interest, we include a sketch of the proof of the stronger fact in Appendix \ref{achieve}.}   
\qed
 \end{remark} 
 We will now sketch a very well known reduction argument that allows to derive from Lemma \ref{extreme} the form of the extremal functions 
 for the one-sided problem \eqref{eq:spherelipschitz2} from Theorem \ref{sphereconstants2}.  (The same argument will work for the two-sided problem \eqref{eq:spherelipschitz3} once 
 we establish a two-sided analogue of Lemma \ref{extreme}.) 

Let $f$ be any (say, Borel) function on $S^{n-1}$ and let $f^*$ be its rearrangement (i.e., verifying $\mu(\{f^* \geq t\}) = \mu(\{f \geq t\})$ for any $t\in \R$) 
that is of the form
\be \label{rearrange}
f^*(u) = g(u_1),
\ee
where $u_1 =\ip{u}{e_1}$ is the first coordinate of $u \in S^{n-1}$ and $g: [-1,1] \to \R$ is nondecreasing. 
This is a standard procedure that works for any random variable and any non-atomic probability measure on $\R$,  
but in our setting it has an additional feature: if $f$ is $1$-Lipschitz, so is $f^*$.  Indeed, suppose $f$ is $1$-Lipschitz 
and let $v, w \in S^{n-1}$. We need to show that  if $t=f^*(w)=  g(w_1)$, $s=f^*(v)=g(v_1)$, then $\ep:= |t-s| \leq {\rm dist}(v,w)$. 
 
By symmetry, we may assume that $s<t$ (hence $v_1< w_1$). By construction, the sets $K=\{u\in S^{n-1} : f^*(u) \leq s\}$ and 
 $L=\{u\in S^{n-1} : f^*(u) \geq t\}$  are ``opposite'' spherical caps with ``parallel'' boundaries and with $v\in K, w\in L$. 
 Likewise, by construction, if $A =\{f\leq s\}$ and $B =\{f\geq t\}$, then  $\mu(K) = \mu(A)$ and $\mu(L) = \mu(B)$. 
 Next, since $f$ is $1$-Lipschitz, it follows that ${\rm dist}(A,B) \geq \ep$; in other words, $B \subset (A_\ep)^c$, 
 where $A_\ep$ is the $\ep$-enlargement of $A$ defined in Fact \ref{fact:MS}. We now appeal to the isoperimetric inequality 
 on $S^{n-1}$ to conclude that $\mu\big((K_\ep)^c\big) \geq \mu\big((A_\ep)^c\big) \geq \mu(B) = \mu(L) $.  
 Since both $L$ and  $(K_\ep)^c$ are (closed) ``left'' spherical caps, we deduce  that $L \subset (K_\ep)^c$ or, 
equivalently,  ${\rm dist}(K,L) \geq \ep$, and -- in particular -- ${\rm dist}(v,w) \geq \ep$, as needed.
 
Since all quantities depending on the distribution are identical for $f$ and $f^*$, it follows that for estimates 
such as \eqref{eq:spherelipschitz2}  it is enough to consider functions of the form \eqref{rearrange}.  
Further, since the geodesic distance between the two ``parallels'' $\{u \in S^{n-1} : u_1= \alpha\}$ and $\{u \in S^{n-1} : u_1= \beta\}$
equals $|\arcsin \alpha - \arcsin \beta|$, the function $f^*$ defined by \eqref{rearrange} is $1$-Lipschitz on $S^{n-1}$ if an only if 
$\psi (\theta) := g(\sin\theta)$  is $1$-Lipschitz on $[-\pi/2,\pi/2]$.  Putting all these observations together, we  conclude that 
the one-sided bound   \eqref{eq:spherelipschitz2}  for given $n\geq 2$ is an instance of \eqref{opt} with 
$\nu$ being the push-forward of $\mu=\mu_n$ under the map  $S^{n-1} \ni u \to \arcsin u_1 \in [-\pi/2,\pi/2]$, 
with the extremal functions given by \eqref{phi_a}.\footnote{Note that the functions in \eqref{phi_a}  
are a priori defined on $\bbR$, but only their restrictions to  $[-\pi/2,\pi/2]$ and the values $a \in [-\pi/2,\pi/2]$ are relevant to the problem at hand. 
However, for other reference measures (for example, the Gaussian measure), 
test functions defined on $\bbR$ and all values of $a\in \R$ may be needed.} 
 
\smallskip 
As was (implicitly) determined in Section \ref{proof2}, $\nu=\nu_n$ is then of the form 
\be \label{def:nu}
 d\nu(\theta) = (2I_{n-2})^{-1}\cos^{n-2}\theta \, d\theta ,\quad \theta \in [-\pi/2,\pi/2] ,
\ee
where $I_m$ is defined by \eqref{I_m}.\footnote{For definiteness, we will assume the that the density of $\nu$ is $0$ outside of the interval $[-\pi/2,\pi/2]$.} Accordingly, verifying the one-sided estimates from Theorem \ref{sphereconstants2} numerically for ``not-too-large''  $n$,
and analytically for  ``small''  $n$, is completely straightforward. 
First, given $a \in [-\pi/2, \pi/2]$ and $t\geq 0$ such that $\E \phi_a + t = a$, the set $\{\phi_a \geq \E \phi_a + t\}$ is exactly $[a, \infty)$, 
and its measure is 
$$
\nu\big([a, \pi/2]\big) = (2I_{n-2})^{-1}\int_a^{\pi/2} \cos^{n-2}\theta \, d\theta,  
$$ 
which is the Haar measure of the corresponding cap 
\be 
K^a = \{u \in S^{n-1} : u_1 \geq \sin a\},
\ee  
already analyzed in Section \ref{proof2} (at least for $a\geq 0$, see Proposition \ref{prop-main}; note that, in the notation from that section, $K^a=S^{n-1}\setminus K_a$). 
This quantity (as a function of $a\in [-\pi/2,\pi/2]$) needs to be compared with 
$e^{-nt^2/2}$, where  $t= t_n(a) = a -\E \phi_a$, which 
can be rewritten as  
\be \label{eq:t}
 t=  a + \E (\theta-a)^+= a+ (2I_{n-2})^{-1}\int_{\theta=a}^{\pi/2} (\theta -a) \cos^{n-2}\theta \, d\theta ,
\ee
where $\theta$, $(\theta-a)^+$ are understood as random variables in the probability space $(\R, \nu)$, and we use the fact that $\E \theta = 0$. 
  For future reference, let us note that  
\eqref{eq:t} can be restated as follows
\be\label{eq:tAlt}
 t= 
 a+ \int_a^{\pi/2} \nu(\{\theta > x\}) \, dx = a+ \int_a^{\pi/2} \mu(K^{x}) \, dx \, ;
\ee
this is because for any nonnegative random variable $X$ one has $\E X = \int_0^\infty \P(X>x)\, dx$. 

Note that if $a$ is close to (but strictly greater than) $-\pi/2$, then $\phi_a \equiv a$ on a set of nearly full measure, while $\E \phi_a < a$. Consequently, for $t=a-\E \phi_a > 0$ we have  $ \mu(\{\phi_a \geq \E \phi_a +t\}) = \mu(\{\phi_a \geq a\}) \approx 1$, while 
$e^{-nt^2/2} < 1$ (in fact also necessarily $ \approx 1$).  This is another argument showing that, in the present context, one can not hope for the multiplicative constant $C$ in the bound of type \eqref{eq:normal} to be strictly smaller than $1$.   

To summarize, we have shown that the validity of the one-sided bound from \eqref{eq:spherelipschitz2} for given $n$ will follow from (and in fact is equivalent to) 
\be\label{eq:one-sided}
\mu(K^a) \stackrel{?}{\leq} e^{-nt^2/2}, 
\ee
where $t= t_n(a)$  is defined via \eqref{eq:t} or  \eqref{eq:tAlt}. Here is a sketch of the calculation showing 
that \eqref{eq:one-sided} holds for $a\geq 0$. 
(In fact, we will see that in that range a tighter bound, with a better constant $C<1$, can be found.  
The argument from the proof of Lemma \ref{extreme} implies then that a version of \eqref{eq:spherelipschitz2} 
with that improved constant $C$ holds for all $1$-Lipschitz functions and all $t$ above the threshold 
given by \eqref{eq:t} or  \eqref{eq:tAlt} with $a=0$. The so calculated threshold depends on $n$ and is asymptotically 
equivalent to $(2\pi n)^{-1/2}$.)

Since we know from 
Proposition \ref{prop-main}  (at least for $n>2$, which we assume) that the measure of the cap 
in question does not exceed $\frac{1}{2}e^{-na^2/2}$, it is enough to show that 
\be \label{a>0}
 \frac{1}{2}e^{-na^2/2} \stackrel{?}{\leq} e^{-nt^2/2} = e^{-n(a+\eta)^2/2} ,
 \ee
where 
\be \label{delta}
 \eta := t-a = -\E \phi_a=\int_a^{\pi/2} \mu(K^{x}) \, dx
 \ee
(cf.\  \eqref{eq:tAlt}).  
Taking logarithms of both sides of the inequality \eqref{a>0} shows that it is equivalent to 
\[
(a+\eta)^2 \stackrel{?}{\leq} a^2 + \frac{2 \ln 2}n 
\]
and finally to 
\[
\eta  \stackrel{?}{\leq}  \sqrt{a^2+ \frac{2 \ln 2}n } -a .
\]
\otoh appealing again to the estimate $ \mu(K^{x}) \leq \frac{1}{2}e^{-nx^2/2}$,   
we can upper-bound $\eta$  by $\frac{1}{2} \int_a^{\pi/2} e^{-nx^2/2} \, dx < \frac{1}{2} \int_a^{\infty} e^{-nx^2/2} \, dx$, so it is enough to show that, for any $a\geq 0$ and $n > 2$, 
\[
\frac{1}{2} \int_a^{\infty} e^{-nx^2/2} \, dx  \stackrel{?}{\leq}  \sqrt{a^2+ \frac{2 \ln 2}n } -a . 
\] 
Let us now change variables via $a = \frac u{\sqrt{n}}$ and, inside the integral, 
$x=\frac s{\sqrt{n}}$ to get an equivalent dimension-free form
\[
\frac{1}{2} \int_u^{\infty} e^{-s^2/2} \, ds  \stackrel{?}{\leq}  \sqrt{u^2+ {2 \ln 2}} -u . 
\]
This inequality is easy to confirm, in fact a much sharper bound 
$\frac{1}{2} \int_u^{\infty} e^{-s^2/2}  < (\sqrt{u^2+ 1} -u) e^{-u^2/2}$ follows from
the well-known Komatu inequality (\cite{Komatu}, or see Remark 4 in \cite{corr})
\be \label{komatsu}
\int_u^{\infty} e^{-s^2/2}  ds \leq \frac {2  e^{-u^2/2}}{u+\sqrt{u^2+2}} .
\ee 
As is easy to check, the above argument  yields (for $a\geq 0$ and $n>2$)   the bound in \eqref{eq:one-sided}
that is of the form $C e^{-nt^2/2}$ with $C=\frac{e^{1/2}}2\approx 0.8244 < 1$.  
Further improvement is possible if one 
replaces the use of the Komatu inequality \eqref{komatsu} by the more precise bound 
$\int_u^{\infty} e^{-s^2/2}   \leq \frac {4  e^{-u^2/2}}{3u+\sqrt{u^2+8}}$ (\cite{Sampford}, or see Proposition 3  in \cite{corr} ; 
numerical check suggests that the constant  $C=0.53$ 
works. This shows that -- except for small values of $t>0$ -- the bound   
\eqref{eq:spherelipschitz2} is not very sharp. However, this is a feature, not a bug:   
it provides the wiggle room needed to deliver the two-sided bound \eqref{eq:spherelipschitz2}. 

Concerning the case $n=2$ of  \eqref{eq:one-sided}, the verification is -- as pointed out earlier -- completely straightforward. 
Indeed, a direct computation leads to $\mu(K^a)=\frac 12 -\frac a\pi$ and $t=t(a)=\frac{(a+\pi/2)^2}{2\pi}$ 
and there is no doubt that \eqref{eq:one-sided} holds in the entire non-trivial range $-\pi/2 \leq a \leq \pi/2$, 
with equality iff $a=-\pi/2$. 


\medskip   For $a<0$, we (trivially) have equality in \eqref{eq:one-sided} when $a = -\pi/2$ (and $t=0$, for all $n$), 
but otherwise the bound $e^{-nt^2/2}$ does not seem very tight. 
(This can be seen heuristically by expanding the quantities in question in powers of $z=a+\pi/2$ if $z$ is small, and approximating the random variable $\sqrt{n}\, \theta$ by a standard normal random variable if $\theta$ is not ``too large.'') 
For a rigorous argument, observe first  that, for $a<0$, 
it is more transparent to rewrite the formula for $t$ as
\be\label{eq:tAlt2}
 t= a+ \E (\theta-a)^+ = \E (\theta-a)^- =  \int_{-a}^{\pi/2} \nu(\{\theta > x\}) \, dx =  \int_{-a}^{\pi/2} \mu(K^x) \, dx .
\ee
In other words, we need to show that if $b:=-a \in [0,\pi/2]$, then
\be\label{eq:tAlt3pre}
t=  \int_{b}^{\pi/2} \mu(K^{x}) \, dx \quad  \stackrel{?}{\Rightarrow} \quad 1- \mu(K^{b}) \leq e^{-nt^2/2} ,
\ee
where we used $\nu\big([a,\infty) \big) = 1-\nu\big([b,\infty) \big)=1- \mu(K^{b})$. 
Note that, in the present context, the relationship between $t$ and $b$ is exactly the same as the relationship between $\eta$ and $a$ was in the case $a\geq 0$.  Next observe that 
in \eqref{eq:tAlt3} we are in a different regime than for $a \geq 0$~:  unless $b>0$ is very small, both sides in the last inequality are close to $1$. Accordingly, it is more appropriate to restate the conclusion of \eqref{eq:tAlt3pre} as a lower bound on the probability of the complementary event
\be\label{eq:tAlt3}
t=  \int_{b}^{\pi/2} \mu(K^{x}) \, dx \quad  \stackrel{?}{\Rightarrow} \quad \mu(K^{b}) \geq 1- e^{-nt^2/2} ,
\ee
To further facilitate concentrating on the values of $b$ that are close to $\pi/2$, we change variables to $y:=\pi/2-x$ and $\alpha =\pi/2-b$.  
The statement \eqref{eq:tAlt3} becomes then
\be\label{eq:tAlt4}
t=  \int_{0}^{\alpha} v(y)\, dy \quad  \stackrel{?}{\Rightarrow} \quad v(\alpha) \geq 1-e^{-nt^2/2} ,
\ee
where \be\label{eq:tAlt5}
v(r)=v_n(r):= \mu(K^{\pi/2-r}) =  (2I_{n-2})^{-1} \int_0^r \sin^{n-2} \theta\, d\theta
\ee
 is the normalized volume of a spherical cap of geodesic radius $r$ in $S^{n-1}$, the function that was already defined in \eqref{cap}. We now appeal to Proposition \ref{prop-cap} to obtain 
\be \label{eq:tAlt6}
 t=t_n(\alpha) \leq \frac 12  \int_{0}^{\alpha} \sin^{n-1} y\, dy = I_{n-1} v_{n+1} (\alpha). 
\ee
The next step is the following simple observation.
\begin{lemma} \label{v_k-v_k+1}
If $\alpha \in [0,\pi/2]$ and $k\geq 2$, then $v_{k+1}(\alpha) \leq v_k(\alpha)$.
\end{lemma}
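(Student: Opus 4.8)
The plan is to rewrite the claimed inequality as a statement about monotonicity of a ratio of incomplete ``sine'' integrals. Using the formula \eqref{eq:tAlt5} for $v_k$ and the identity $\int_0^{\pi/2}\sin^m\theta\,d\theta = I_m$ (which holds because $\theta \mapsto \pi/2-\theta$ swaps $\sin$ and $\cos$ on $[0,\pi/2]$), the inequality $v_{k+1}(\alpha)\leq v_k(\alpha)$ is, after clearing the normalizing constants, equivalent to
\[
\frac{\int_0^\alpha \sin^{k-1}\theta\,d\theta}{\int_0^\alpha \sin^{k-2}\theta\,d\theta} \;\leq\; \frac{I_{k-1}}{I_{k-2}} \;=\; \frac{\int_0^{\pi/2}\sin^{k-1}\theta\,d\theta}{\int_0^{\pi/2}\sin^{k-2}\theta\,d\theta}\,.
\]
So it suffices to show that the function $\alpha \mapsto R(\alpha):=\big(\int_0^\alpha \sin^{k-1}\theta\,d\theta\big)\big/\big(\int_0^\alpha \sin^{k-2}\theta\,d\theta\big)$ is nondecreasing on $(0,\pi/2]$; then $R(\alpha)\leq R(\pi/2)=I_{k-1}/I_{k-2}$, which is exactly what we want.

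The monotonicity of $R$ is an instance of the elementary Lemma stated right after Proposition \ref{prop-cap}, applied with $f(\theta)=\sin^{k-2}\theta>0$ and $h(\theta)=\sin\theta$: since $h$ is nondecreasing on $[0,\pi/2]$ (rather than nonincreasing), the symmetric version of that Lemma — valid, as noted there, ``under any reasonable assumptions'' — gives that $\alpha\mapsto \int_0^\alpha fh / \int_0^\alpha f$ is nondecreasing. If one prefers a self-contained argument, differentiate: the sign of $R'(\alpha)$ equals that of
\[
\sin^{k-1}\!\alpha \int_0^\alpha \sin^{k-2}\theta\,d\theta - \sin^{k-2}\!\alpha \int_0^\alpha \sin^{k-1}\theta\,d\theta = \sin^{k-2}\!\alpha \int_0^\alpha \sin^{k-2}\theta\,(\sin\alpha-\sin\theta)\,d\theta \geq 0,
\]
because $\sin$ is nondecreasing on $[0,\pi/2]$, so $\sin\alpha\geq\sin\theta$ for every $\theta\in[0,\alpha]$. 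Hence $R$ is nondecreasing, completing the proof.

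There is no real obstacle here: the statement is genuinely elementary, and the only point requiring a moment's care is the bookkeeping that identifies the limiting value $R(\pi/2)$ with the Wallis ratio $I_{k-1}/I_{k-2}$ and hence with $v_{k+1}(\pi/2)=v_k(\pi/2)=\tfrac12$ (so equality holds exactly at $\alpha=\pi/2$). The hypothesis $k\geq 2$ is used only to ensure $f=\sin^{k-2}$ is integrable and positive on $(0,\pi/2)$.
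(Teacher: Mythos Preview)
Your proof is correct. The route, however, differs from the paper's. The paper argues directly with the \emph{difference} $D(\alpha):=v_k(\alpha)-v_{k+1}(\alpha)$: it notes $D(0)=D(\pi/2)=0$ and computes $D'(\alpha)=\frac{\sin^{k-2}\alpha}{2I_{k-2}}-\frac{\sin^{k-1}\alpha}{2I_{k-1}}$, which is positive on $[0,\alpha_0]$ and negative on $[\alpha_0,\pi/2]$ (with $\sin\alpha_0=I_{k-1}/I_{k-2}$); hence $D$ is unimodal with zero endpoints, so $D\geq 0$. You instead recast the inequality as the ratio bound $R(\alpha)\leq R(\pi/2)$ and prove $R$ is nondecreasing, which is precisely (the symmetric form of) the elementary lemma already used in the proof of Proposition~\ref{prop-cap}. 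Both arguments are one-line calculus; yours has the pleasant feature of reusing machinery already present in the paper, while the paper's version makes the ``equality at both endpoints, single interior maximum'' structure of $v_k-v_{k+1}$ explicit.
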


\begin{proof}
There is equality for $\alpha=0$ and $\alpha=\pi/2$, and 
the function $x \to v_k(x)-v_{k+1}(x)$ has the derivative $\frac{\sin^{k-2} x}{2I_{k-2}}-\frac{\sin^{k-1} x}{2I_{k-1}}$, which is positive on $[0,\alpha_0]$ and
negative on $[\alpha_0,\pi/2]$, where $\alpha_0$ is such that $\sin \alpha_0 = I_{k-1}/I_{k-2}$.
\end{proof}
Combining Lemma \ref{v_k-v_k+1} and \eqref{eq:tAlt6} we see that \eqref{eq:tAlt4} will follow if 
\be
v_n(\alpha) \stackrel{?}{\geq} 1-e^{-n(I_{n-1} v_{n} (\alpha)^2/2} .
\ee 
Since $e^{-u} \geq 1-u$, the above can be further strengthened to
\be
v_n(\alpha) \stackrel{?}{\geq} n\big(I_{n-1} v_{n} (\alpha)\big)^2/2, 
\ee 
which in turn is equivalent to
\be
nI_{n-1}^2 v_{n} (\alpha)  \stackrel{?}{\leq} 2. 
\ee 
This is evidently true for $n\geq 2$ since  $v_{n} (\alpha) \leq v_{n} (\pi/2) = \frac 12$ and $I_{n-1}^2 \leq \frac{\pi}{2(n-1)}$ by Proposition \ref{wallis}(ii), which concludes the proof of \eqref{eq:spherelipschitz2}. 
\begin{remark} {\rm We point out that the two crucial inequalities appearing in the proof, namely \eqref{eq:one-sided} and the conclusion of \eqref{eq:tAlt3}, are {\em de facto} functional inequalities relating the function $\eta(\cdot)$ and its derivative or, in the formulation in the spirit of \eqref{eq:tAlt4}, the function $v_n(\cdot)$ and its primitive.  
Accordingly, it is conceivable that once one comes up with a manageable related differential inequality, these functional inequalities would follow. (This could be parallel to the proofs of Komatu-like inequalities, see, e.g., Proposition 3 in \cite{corr}, or Exercise A.2 in \cite{ABMB}.)  Similar comments apply to the proofs of the two sided-bound \eqref{eq:spherelipschitz3} and Corollary \ref{cor:gaussian} in the next two subsections. \qed
 }   
\end{remark}

\bigskip 
\subsection{The two-sided problem : Proof of \eqref{eq:spherelipschitz3}} \  We now pass to the analysis of the estimate \eqref{eq:spherelipschitz3} from Theorem \ref{sphereconstants2}, i.e., the bound for 
$\mu(\{|f - \E f | \geq t\})$. The initial step, a reduction to the case of $\psi \in \mathcal{L} = \{\psi : \R \to \R, \psi \hbox{ is }\ 1\hbox{-Lipschitz and nondecreasing}\}$ 
and the reference measure $\nu=\nu_n$ defined by \eqref{def:nu} is the same as for the one-sided problem. 
The second step, 
the analogue of Lemma \ref{extreme}, i.e.,  a reduction to functions $\phi_a$ from \eqref{phi_a}, is slightly more involved, and  we impose some mild restrictions on   
 the reference measure $\nu$, which needs to be symmetric and unimodal 
(by the latter we mean that $d\nu(x) = \rho(|x|) \,dx$, where $\rho : \R^+ \to \R^+$ is nonincreasing).  

\begin{lemma} \label{extreme2} 
Let $\nu$ be a symmetric unimodal probability measure on $\R$ such that $\int |x| d\nu(x) < \infty$ and let $t>0$. Then 
\begin{equation} \label{opt3}
\sup_{\psi \in \mathcal{L}}\ \nu\left(\{|\psi - \E \psi| \geq t\}\right) =  \sup_{a\in \R}\ \nu\left(\{ |\phi_a- \E \phi_a| \geq t\}\right). 
\end{equation}
Moreover, for each $a$ it is sufficient to consider $t=t(a) = a-\E \phi_a$.  
\end{lemma}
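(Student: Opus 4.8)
The plan is to mimic the proof of Lemma \ref{extreme}, but now tracking a two-sided event. As before, the inequality $\geq$ in \eqref{opt3} is trivial, so fix $\psi \in \mathcal{L}$ and $t>0$, and we must produce some $a$ with $\nu(\{|\phi_a - \E\phi_a|\geq t\}) \geq \nu(\{|\psi-\E\psi|\geq t\})$. Write $E(\psi,t) := \{|\psi-\E\psi|\geq t\}$. Since $\psi$ is nondecreasing and continuous, $E(\psi,t)$ is a union $(-\infty,b_0]\cup[a_0,\infty)$ of two (possibly empty, possibly degenerate) tail intervals, with $\psi(b_0)=\E\psi-t$ and $\psi(a_0)=\E\psi+t$ (taking $b_0=-\infty$ or $a_0=+\infty$ when the corresponding tail is empty). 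Normalizing by an additive constant we may assume $\psi(a_0)=a_0$, so that $t = a_0 - \E\psi$. As in Lemma \ref{extreme}, $\psi \geq \phi_{a_0}$ everywhere (equality at $a_0$, and $\psi(x)\geq x = \phi_{a_0}(x)$ for $x\leq a_0$ by the $1$-Lipschitz property), hence $\E\phi_{a_0}\leq \E\psi$. Set $t_0 := a_0 - \E\phi_{a_0} \geq t$.

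The right tail is handled exactly as before: $\{\phi_{a_0}\geq \E\phi_{a_0}+t\} \supseteq \{\phi_{a_0}\geq \E\phi_{a_0}+t_0\} = [a_0,\infty) = \{\psi\geq\E\psi+t\}$. The new issue is the left tail: I must show $\nu(\{\phi_{a_0}\leq \E\phi_{a_0}-t\}) \geq \nu(\{\psi\leq\E\psi-t\}) = \nu\big((-\infty,b_0]\big)$. The left tail of $\phi_{a_0}$ at level $t$ is $(-\infty, c_0]$ where $c_0 = \E\phi_{a_0}-t$; since $\phi_{a_0}$ agrees with the identity on $(-\infty,a_0]$ and $c_0 \leq a_0$ (because $t>0 \geq \E\phi_{a_0}-a_0$), this is just $\nu\big((-\infty,c_0]\big)$. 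So it suffices to prove $c_0 \geq b_0$, i.e. $\E\phi_{a_0}-t \geq \E\psi-t$, which is precisely $\E\phi_{a_0}\geq\E\psi$ — but we have the opposite inequality! This is where symmetry and unimodality of $\nu$ must enter: the loss $\E\psi-\E\phi_{a_0}$ on the mean is compensated because replacing $\psi$ by $\phi_{a_0}$ also shifts mass, and for a symmetric unimodal $\nu$ one compares the left-tail measure directly rather than through the threshold. Concretely, I would argue that $\nu((-\infty,c_0])$, as a function of the gap $t$ via $c_0=\E\phi_{a_0}-t$, together with the relation $t_0 = a_0-\E\phi_{a_0}$, can be re-expressed so that the whole event $E(\phi_{a_0},t)$ for the chosen $a_0$ is $\{\phi_{a_0}\geq a_0\}\cup\{\phi_{a_0}\leq 2\E\phi_{a_0}-a_0\} = [a_0,\infty)\cup(-\infty,\,\E\phi_{a_0}-t_0]$, and then use that by symmetry of $\nu$ the measure of a two-sided symmetric-in-$\nu$ tail set is what is being maximized. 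The cleanest route: reduce to the \emph{symmetric} choice of $a$ directly — show that among the $\phi_a$ the supremum in $a$ is attained where the event $\{|\phi_a-\E\phi_a|\geq t\}$ is itself symmetric about the $\nu$-median (here $0$), at which point both tails have the same measure and the one-sided comparison already established suffices for each tail.

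I expect the main obstacle to be exactly this left-tail bookkeeping: unlike in Lemma \ref{extreme}, dominating $\psi$ pointwise by $\phi_{a_0}$ hurts rather than helps on the left side, and one genuinely needs symmetry and unimodality to recover the comparison. The likely fix is to not insist on the \emph{same} $a_0$ for both tails: pick $a_0$ from the right tail of $\psi$ as above to control $\{\psi\geq\E\psi+t\}$, separately pick $a_1$ from the left tail to control $\{\psi\leq\E\psi-t\}$ (by the mirror-image argument, using $-\psi$ and the symmetry of $\nu$), observe that $t=a_0-\E\psi$ and, by symmetry, the mirrored threshold gives $t = \E\psi - (\text{something}) $ forcing a relation between $a_0$ and $a_1$; unimodality then yields that one of the two choices already captures \emph{both} tails, giving \eqref{opt3}. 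For the final sentence "it is sufficient to consider $t=t(a)=a-\E\phi_a$", I would repeat verbatim the reasoning from the end of Lemma \ref{extreme}: to upper-bound $\nu(E(\phi_a,t))$ we only ever used the set $E(\phi_a,t_0)$ for the value $t_0 = a-\E\phi_a$, and monotonicity of $t\mapsto\nu(E(\phi_a,t))$ does the rest.
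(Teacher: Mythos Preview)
Your proposal has a genuine gap that you yourself identify but do not close. The direct comparison $\psi \geq \phi_{a_0}$, which settled Lemma~\ref{extreme}, controls only the right tail here; on the left tail it gives $\E\phi_{a_0}\leq \E\psi$, which is the wrong direction, and your suggested fixes do not work. In particular, the claim that the supremum over $\phi_a$ is attained where $\{|\phi_a-\E\phi_a|\geq t\}$ is symmetric about $0$ is false for the measures $\nu_n$ under consideration (see Figure~\ref{n=3}: the maximum occurs for $a$ near $-\pi/2$, not at a symmetric configuration). The alternative of choosing separate $a_0,a_1$ for the two tails and hoping ``one of the two choices captures both'' is not substantiated and there is no reason it should hold.

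The paper takes a different route that avoids this left-tail obstruction entirely. It first establishes (by a compactness argument) that an extremal $\psi$ exists, and then proves a structural fact: any extremal $\psi$ must satisfy $\psi(\theta)-\psi(\alpha)=\theta-\alpha$ on $[\alpha-t,\alpha+t]$, where $\psi(\alpha)=\E\psi$. This is done by an explicit perturbation: if $\psi$ fails to have slope $1$ throughout that interval, one constructs $\psi_0,\psi_1$ that modify $\psi$ only on the ``middle'' interval, then an intermediate $\psi_s$ with $\E\psi_s=\E\psi$ whose sub-level set $\{|\psi_s-\E\psi_s|<t\}$ is strictly smaller. The payoff of this step is that for every extremal $\psi$ the set $\{|\psi-\E\psi|<t\}$ is an interval of \emph{fixed length} $2t$ centered at $\alpha$. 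Now symmetry and unimodality of $\nu$ enter cleanly: $\nu\big((\alpha-t,\alpha+t)\big)$ is a decreasing function of $|\alpha|$, so one wants $|\alpha|$ as large as possible. Replacing $\psi$ by $\phi_{\alpha+t}+\text{const}$ (which agrees with $\psi$ on $[\alpha-t,\alpha+t]$ and lies below it elsewhere) lowers the mean, hence pushes the new center $\widetilde{\alpha}$ further from $0$ (taking WLOG $\alpha\leq 0$), which only increases the two-sided tail. This is the missing idea in your attempt: rather than comparing left tails directly, reduce to comparing centered intervals of the same length, where unimodality decides everything.
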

\noindent The proof of the Lemma is elementary, but on the complicated side. We relegate it to Appendix \ref{lemma11}.

Similarly as was the case in the one-sided setting, Lemma \ref{extreme2}  reduces -- for a specific density $\nu$ -- the inequality of type  \eqref{eq:spherelipschitz3} to a comparison of two concrete functions of the parameter $a$, which can be verified numerically. For added rigor, this should be accompanied by an asymptotic analysis of the quantities in question when $t\to 0$ (note that as $t \to 0$, both sides of  \eqref{eq:spherelipschitz3}  typically converge to $1$) and -- if  $\nu$ is not compactly supported~--~as $a \to \infty$.  In particular, it is routine to check whether  \eqref{eq:spherelipschitz3}  holds for any particular value of $n$. 
For $n=2$ there is a failure on the same interval  for which Theorem \ref{sphereconstants}  failed for $n=2$, and  the failure happens for the same reason:  in the reformulation in terms of $\nu=\nu_2$ given by \eqref{def:nu}, consider the function $\psi(\theta)=\theta$ and note that since the density of $\nu_n$ is constant for $n=2$, replacing the median by the mean does not make any difference. For $n=3$, the integrals involved in the definitions of $\E \phi_a$, $t=t(a)$, and the relevant probabilities can be explicitly evaluated and the resulting graphs look as in Figure~\ref{n=3}. 
\begin{figure}[ht!]
\includegraphics[width=0.8\textwidth]{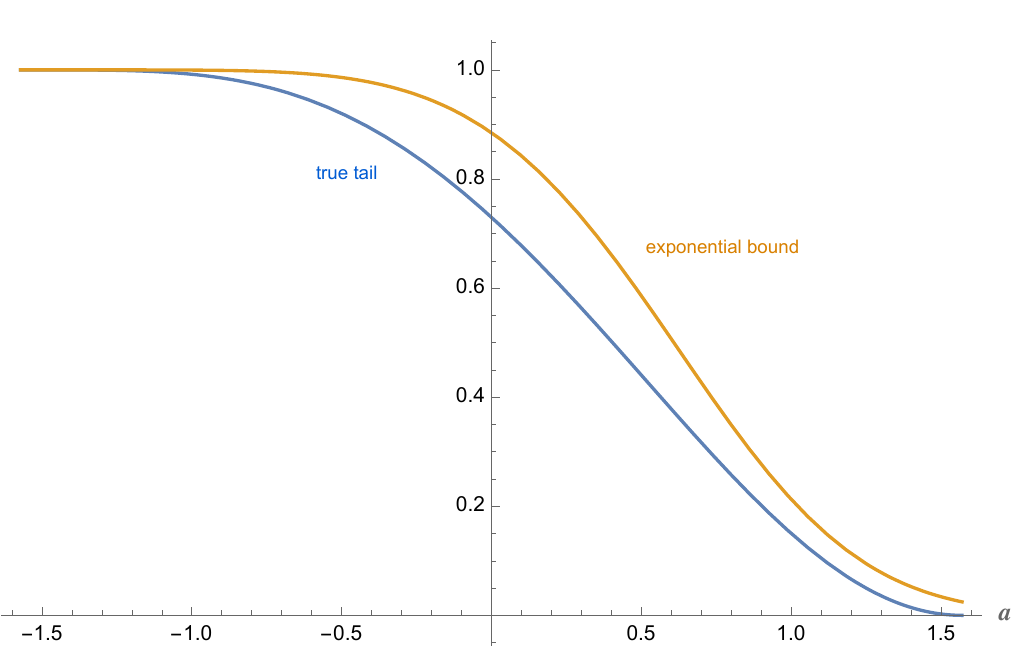}
\caption{The graphs of $a \to F(a) =\nu_n\left(\{ |\phi_a- \E \phi_a| \geq t(a)\}\right)$ vs.  $a\to G(a)=e^{-nt(a)^2/2}$ for $n=3$. There is a clear separation except when $a$ is close to $-\pi/2$.}
\label{n=3}
\end{figure}
\noindent Clearly, the only values of $a$ that are questionable are those close to $-\pi/2$, but it is readily verified that  $F(a)=\nu_3\left(\{ |\phi_a- \E \phi_a| \geq t\}\right) = 1-\frac1{12} (a+\pi/2)^4 + O((a+\pi/2)^6)$, while $G(a)= e^{-nt(a)^2/2} = 1-\frac1{96} (a+\pi/2)^6+O((a+\pi/2)^8)$. Consequently, comparing $\big(1-F(a)\big)(a+\pi/2)^{-4}$ vs. $\big(1-G(a)\big)(a+\pi/2)^{-4}$ will show a clear separation. 

\medskip 
We now focus on the values $n > 3$, which we will assume when needed (though most steps will work for $n\geq 3$ or even $n\geq 2$).  

\smallskip 
Consider first the case $a\geq 0$.  In the notation from the one-sided setting, we have 
\begin{eqnarray}
\nu\left(\{ |\phi_a- \E \phi_a| \geq t\}\right) &= &\nu \left(\{ \phi_a \geq a\}\right) + \nu \left(\{ \phi_a \leq -a - 2\eta\}\right) \nonumber \\
&= &\nu ([a, \pi/2]) +  \nu ([a+2\eta, \pi/2]) . \nonumber
\end{eqnarray}
Accordingly, our problem reduces to determining whether  
\be \label{two-sided} 
\nu ([a, \pi/2]) +  \nu ([a+2\eta, \pi/2])  \stackrel{?}{\leq} e^{-n(a+\eta)^2/2}. 
\ee
If we use (for $n > 2$) the bound $\nu ([t, \pi/2]) \leq \frac 12 e^{-nt^2/2}$ (a special case of \eqref{eq:sphereconstants}), the conclusion will follow if 
\be \label{concave}
 \frac 12 \left( e^{-na^2/2} + e^{-n(a+2\eta)^2/2}\right) \stackrel{?}{\leq} e^{-n(a+\eta)^2/2}. 
\ee
Since the function $s \to e^{-ns^2/2}$ is concave on the interval $[-\frac 1{\sqrt{n}}, \frac 1{\sqrt{n}}]$, this clearly holds if $a+2\eta \leq \frac 1{\sqrt{n}}$.  A direct calculation shows that the constraint $a+\eta \leq \frac 1{\sqrt{n}}$ is also sufficient. However, this argument can not work in full generality since the function  $s \to e^{-ns^2/2}$ is convex on the interval $[ \frac 1{\sqrt{n}}, \infty)$, and so the inequality converse to \eqref{concave} holds if $a \geq \frac 1{\sqrt{n}}$. To handle such larger values of $a$, we need a strengthening of the inequality 
\eqref{eq:sphereconstants2} from Proposition \ref{prop-main}. 

Heuristically, it is clear that the bound \eqref{eq:sphereconstants2} can be improved if $s = x{\sqrt{n}}$ is large enough. Indeed, $\sqrt{n}\, \theta$ behaves roughly as a standard normal random variable $Z$ and so -- within the range of this approximation -- $\nu_n(\theta \geq x) \approx \P(Z \geq s) \sim \frac 1{\sqrt{2 \pi}\, s} e^{-s^2/2} 
= \frac 1{\sqrt{2 \pi n}\, x} e^{-nx^2/2}$ by Komatu's inequality \eqref{komatsu}, or the more precise inequality from \cite{corr} mentioned in the proof of \eqref{eq:spherelipschitz2}. Consequently, the coefficient of $e^{-nx^2/2}$ becomes small when $x{\sqrt{n}}$ is large. The same phenomenon is exemplified in the spherical case by the bound \eqref{cap-cos2}.  For our purposes, the following variant will suffice.

\begin{lemma} \label{improved}
If $n>3$, then   $ \mu(K^{x}) = \nu_n([x, \pi/2]) \leq \frac 25 e^{-nx^2/2}$ for $x\in [\frac 1{2\sqrt{n}},\frac{\pi}2]$.  For $n=3$, the inequality holds for $x\in [\frac 5{9\sqrt{n}},\frac{\pi}2]$.
\end{lemma}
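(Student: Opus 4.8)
The plan is to establish the bound $\mu(K^x) \leq \tfrac25 e^{-nx^2/2}$ in the indicated range by combining the two estimates on spherical caps already available from Proposition \ref{prop-cap}: the crude-but-global bound $v(r)\leq \tfrac12 \sin^{n-1}r$ (equivalently $\mu(K^x)\leq \tfrac12\cos^{n-1}x$, writing $r=\pi/2-x$), and the sharper bound $v(r)\leq (\sqrt{2\pi}\,\kappa_n\cos r)^{-1}\sin^{n-1}r$ (equivalently $\mu(K^x)\leq (\sqrt{2\pi}\,\kappa_n\sin x)^{-1}\cos^{n-1}x$). Since $\kappa_n>\sqrt{n-\tfrac12}$, the second bound gives
\[
\mu(K^x) \leq \frac{\cos^{n-1}x}{\sqrt{2\pi(n-\tfrac12)}\,\sin x}.
\]
First I would reduce everything to the one-variable inequality $\cos x\leq e^{-x^2/2}$ on $[0,\pi/2]$ (the inequality \eqref{ineq} already proved), which yields $\cos^{n-1}x\leq e^{-(n-1)x^2/2}$. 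Combining, it suffices to show
\[
\frac{e^{-(n-1)x^2/2}}{\sqrt{2\pi(n-\tfrac12)}\,\sin x} \leq \frac25 e^{-nx^2/2},
\quad\text{i.e.,}\quad
\frac{e^{x^2/2}}{\sqrt{2\pi(n-\tfrac12)}\,\sin x} \leq \frac25 ,
\]
which rearranges to $\sin x \geq \tfrac{5}{2\sqrt{2\pi(n-1/2)}}\,e^{x^2/2}$ on the relevant interval $[\tfrac1{2\sqrt n},\tfrac\pi2]$.

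The natural substitution is $x=s/\sqrt n$, turning the target into a near dimension-free statement $\sqrt n\,\sin(s/\sqrt n) \geq \tfrac{5}{2}\sqrt{\tfrac{n}{2\pi(n-1/2)}}\,e^{s^2/(2n)}$ for $s\in[\tfrac12,\tfrac{\pi\sqrt n}2]$. Here the key steps are: (i) on the lower end, $s\in[\tfrac12, s_0]$ for a fixed threshold $s_0$, use $\sqrt n\sin(s/\sqrt n)$ increasing in $s$ (for $s/\sqrt n\leq\pi/2$) together with the elementary bound $\sin u \geq u - u^3/6 \geq u(1-u^2/6)$, and bound $e^{s^2/(2n)}$ from above by its value at $s=s_0$; (ii) on the upper end, where $x=s/\sqrt n$ may be close to $\pi/2$, $\sin x$ is bounded below by a constant while $e^{x^2/2}\leq e^{\pi^2/8}$, and $\sqrt{2\pi(n-1/2)}$ grows, so the inequality holds comfortably for $n$ large; the finitely many small $n$ that fall through (and the exact calibration $x\geq \tfrac5{9\sqrt 3}$ for $n=3$, where $\kappa_3=2$ rather than just $>\sqrt{5/2}$ must be used) are checked numerically/graphically, exactly as $q_3,q_4$ were handled in Section \ref{proof2}. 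I would also use Wallis' estimate $I_{n-2}\geq\sqrt{\pi/(2n-2)}$ from Proposition \ref{wallis}(ii) wherever the normalization constant $(2I_{n-2})^{-1}$ needs to be controlled, and the monotonicity of $r\mapsto v(r)/\sin^{n-1}r$ if an intermediate-range argument is preferable to the global bound.

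The main obstacle is the \emph{matching of the two regimes}: the Taylor bound $\sin u\geq u(1-u^2/6)$ is only useful when $u=x$ is not too large, while the ``$\sin x\geq$ const'' bound wastes too much when $x$ is small, so there is a middle band of $x$ (around $x\asymp 1$, i.e.\ $s\asymp\sqrt n$) where neither crude estimate alone suffices and one must instead carry the function $h(x):=\ln\sin x + \tfrac{n}{2}x^2 - (n-1)\tfrac{x^2}{2}$ — or rather $\ln\sin x - (n-1)\ln(1/\cos x) + \tfrac n2 x^2$ before invoking $\cos x\leq e^{-x^2/2}$ — and show $h$ stays above $\ln\big(\tfrac25\sqrt{2\pi(n-1/2)}\big)$ by locating its critical points via $h'(x)=\cot x + x$, monotonicity, and endpoint evaluation. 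This is the same ``take logarithms, differentiate repeatedly, then finish with a numerical check at the unique extremum'' strategy used for $q_3(x)\leq 1$, and I expect it to go through with one genuinely numerical verification of $h$ at its minimizer for each of the small values of $n$ not covered by the clean asymptotic argument.
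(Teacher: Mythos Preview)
Your approach has a genuine gap at the lower end of the range.  You rely throughout on the estimate \eqref{cap-cos2}, i.e., $\mu(K^x)\leq(\sqrt{2\pi}\,\kappa_n\sin x)^{-1}\cos^{n-1}x$, and then try to show that the coefficient $(\sqrt{2\pi}\,\kappa_n\sin x)^{-1}e^{x^2/2}$ is at most $\tfrac25$.  But at $x=\tfrac1{2\sqrt n}$ we have $\sin x\approx \tfrac1{2\sqrt n}$ and $\kappa_n\approx\sqrt n$, so that coefficient is approximately $\big(\tfrac{\sqrt{2\pi}}{2}\big)^{-1}=\tfrac{2}{\sqrt{2\pi}}\approx 0.8$, already double the target $\tfrac25$.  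Equivalently, your reduced inequality $\sqrt n\,\sin(s/\sqrt n)\geq \tfrac52\sqrt{\tfrac{n}{2\pi(n-1/2)}}\,e^{s^2/(2n)}$ at $s=\tfrac12$ reads roughly $\tfrac12\geq \tfrac5{2\sqrt{2\pi}}\approx 1$, which is false for every $n$.  No amount of sharpening of $\sin u$ by Taylor expansion, or of locating critical points of your auxiliary function $h$, can repair this: the bound \eqref{cap-cos2} is simply too weak near $x=\tfrac1{2\sqrt n}$ (it blows up as $x\to 0$).

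The paper handles the small-$x$ regime by an entirely different mechanism: it invokes Lemma~\ref{powers}, which gives the \emph{lower} bound $\cos^{n-2}\theta\geq e^{-n\theta^2/2}$ on a suitable interval, and uses it to lower-bound $\nu_n([0,x])$ and hence upper-bound $\nu_n([x,\pi/2])=\tfrac12-\nu_n([0,x])$ by (approximately) $\tfrac12-\tfrac1{\sqrt{2\pi}}\int_0^u e^{-s^2/2}\,ds$ with $u=x\sqrt n$.  At $u=\tfrac12$ this yields about $0.31$, comfortably below $\tfrac25 e^{-1/8}\approx 0.35$.  Your route via \eqref{cap-cos2} is only used for the complementary range $x\geq\sqrt{6/n}$, and there the paper also avoids your factor $e^{x^2/2}$ by using the sharper comparison $\cos^{n-1}x\leq e^{-nx^2/2}$ from Lemma~\ref{powers} rather than the cruder $\cos^{n-1}x\leq e^{-(n-1)x^2/2}$.
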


Lemma \ref{improved} is based on a subtle comparison between the cosine and the exponential function. Since such ideas will also be used later in the argument, we state them separately.

\begin{lemma} \label{powers} 
We have
\begin{equation} \label{improved2} 
\cos^{n-2}\theta \geq e^{-n\theta^2/2} \quad \hbox{ for } \ \theta \in [0, 3/\sqrt{n}] \ \hbox{ and } \ n \geq 5.
\end{equation}
For $n = 3,4$, the inequality holds for $\theta \in [0, 2.67/\sqrt{n}]$ and $\theta \in [0, 2.89/\sqrt{n}]$ respectively. On the other hand, 
\begin{equation} \label{improved4}
\cos^{n-1}\theta \leq e^{-n\theta^2/2} \quad \hbox{ for } \ \theta \in [\sqrt{6/n},\pi/2]  \ \hbox{ and } \ n \geq 3.
\end{equation}
\end{lemma}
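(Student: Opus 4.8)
The plan is to take logarithms in both parts and reduce each bound to a one-variable inequality, then exploit a convexity structure (for \eqref{improved2}) or a monotonicity of a normalized quantity (for \eqref{improved4}).

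For \eqref{improved2}, fix $n$ and set $\psi_n(\theta) := (n-2)\ln\cos\theta + \frac n2\,\theta^2$; the claim is that $\psi_n\geq 0$ on $[0,c/\sqrt n]$, where $c=3$ when $n\geq 5$ and $c=2.67,\ 2.89$ when $n=3,4$. First I would record that $\psi_n(0)=\psi_n'(0)=0$ and that
\[
\psi_n''(\theta)=n-(n-2)\sec^2\theta
\]
is positive at $\theta=0$ and strictly decreasing, hence vanishes exactly once, at $\theta_1=\arccos\sqrt{1-2/n}$. Thus $\psi_n$ is convex on $[0,\theta_1]$ and concave on $[\theta_1,\pi/2)$. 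On the convex piece, $\psi_n\geq 0$ is automatic from $\psi_n(0)=\psi_n'(0)=0$; since one checks $\theta_1<c/\sqrt n<\pi/2$ in each case, on the concave piece $[\theta_1,c/\sqrt n]$ the function $\psi_n$ attains its minimum at an endpoint, and $\psi_n(\theta_1)\geq 0$, so it remains only to verify $\psi_n(c/\sqrt n)\geq 0$, i.e.\ $\cos(c/\sqrt n)\geq e^{-c^2/(2(n-2))}$. Expanding, the difference of the two sides equals $\frac{c^2}{n}\bigl(1-\frac{c^2}{12}\bigr)+O(n^{-2})$, which is positive for large $n$ because $c^2<12$ in all three cases; the finitely many remaining values of $n$ are then handled by direct checking (alternatively, by showing that $g(n):=\frac{c^2}{2}+(n-2)\ln\cos(c/\sqrt n)$ is monotone in $n$, decreasing to its positive limit).

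For \eqref{improved4}, taking logarithms turns the claim into $-\ln\cos\theta-\frac12\theta^2\geq\frac{\theta^2}{2(n-1)}$. Substituting $u=\theta^2$ and writing $F(u):=-\ln\cos\sqrt u-\frac u2$, I must show $F(u)\geq\frac{u}{2(n-1)}$ for $u\in[6/n,\pi^2/4]$ (the right endpoint being trivial, since there $\cos\theta=0$). The key point is that $-\ln\cos\theta$ has nonnegative Taylor coefficients --- because its derivative $\tan\theta$ does (a classical fact about the tangent numbers) --- so $F(u)=\sum_{k\geq 2}c_ku^k$ with all $c_k\geq 0$, and $c_2=\frac1{12}$, $c_3=\frac1{45}$; consequently $F(u)/u$ is nondecreasing on $[0,\pi^2/4)$, and therefore for $u\geq 6/n$,
\[
\frac{F(u)}{u}\ \geq\ \frac{F(6/n)}{6/n}\ \geq\ c_2\cdot\frac6n+c_3\cdot\Bigl(\frac6n\Bigr)^2\ =\ \frac1{2n}+\frac4{5n^2}.
\]
It then suffices to note that $\frac1{2n}+\frac4{5n^2}\geq\frac1{2(n-1)}$ is equivalent to $3n\geq 8$, hence holds for every $n\geq 3$.

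The routine ingredients are the derivative computations and the elementary final estimate in the second part; the genuinely delicate step is the endpoint verification in the first part. The inequality $\cos(c/\sqrt n)\geq e^{-c^2/(2(n-2))}$ has no obvious slick proof that is uniform in $n$ --- both sides tend to $1$ and the gap is only of order $1/n$ --- so one is forced either to establish monotonicity of $g(n)$ in $n$ or to supplement the asymptotic estimate with a short explicit check for small $n$. The same tightness (note that $c/\sqrt n<\pi/2$ barely holds when $n=3$, since $2.67<\pi\sqrt3/2\approx 2.72$) is what dictates the non-round constants $2.67$ and $2.89$ in the statement.
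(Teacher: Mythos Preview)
Your argument for \eqref{improved2} is essentially the paper's: both take logarithms, observe that the resulting function has $\psi_n(0)=\psi_n'(0)=0$ with $\psi_n''$ changing sign exactly once (convex then concave), and thereby reduce the claim to the single endpoint check $\psi_n(c/\sqrt n)\geq 0$, which is handled asymptotically together with direct verification (or monotonicity in $n$) for small $n$.

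For \eqref{improved4}, however, your route is genuinely different and cleaner. The paper, arguing by analogy with the first part, reduces to the endpoint $\theta=\sqrt{6/n}$ and then shows that the sequence $\bigl(\cos^{n-1}\sqrt{6/n}\bigr)$ increases to $e^{-3}$; this monotonicity is described there as ``fairly delicate,'' roughly equivalent to the nonstandard inequality $\cos s\leq e^{-3s^2/(6-s^2)}$. You sidestep this entirely by exploiting the positivity of the Taylor coefficients of $-\ln\cos\sqrt u$ (inherited from the tangent numbers): writing $F(u)=\sum_{k\geq 2}c_k u^k$ with $c_k\geq 0$, the ratio $F(u)/u$ is nondecreasing, and truncating at the $c_2,c_3$ terms converts the claim into the elementary inequality $3n\geq 8$. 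This buys a fully rigorous, numerics-free proof of \eqref{improved4} for all $n\geq 3$, whereas the paper's approach requires either an auxiliary inequality or a numerical check. On the other hand, the paper's method treats both halves of the lemma uniformly and makes transparent why the specific threshold $\sqrt{6/n}$ arises (as the crossover point of the endpoint sequence).
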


The proofs of both Lemmas involve mostly calculus, some numerics, and careful book-keeping. We relegate them to Appendices \ref{lemma13} and \ref{lemma12}. 

\medskip Returning to the proof of \eqref{two-sided}, we consider two cases. 

\smallskip \noindent {\em Case $1^\circ$} $a \geq \frac 1{2\sqrt{n}}$\quad Assuming $n>3$, both terms on the left-hand side of \eqref{two-sided} can be upper-bounded using (the first statement of) Lemma \ref{improved} and so it is enough to verify
\be \label{strong-concave}
 \frac 25 \left( e^{-na^2/2} + e^{-n(a+2\eta)^2/2}\right) \stackrel{?}{\leq} e^{-n(a+\eta)^2/2}. 
\ee
Due to the improvement in the bound for $ \mu(K^{x})$ (compared to \eqref{concave}), an argument along the lines of the proof of \eqref{eq:spherelipschitz2} will work.  First, since clearly $e^{-n(a+2\eta)^2/2}\ \leq e^{-n(a+\eta)^2/2}$, the inequality \eqref{strong-concave} can be further reduced to
\be
 e^{-na^2/2} \stackrel{?}{\leq}1.5 \ e^{-n(a+\eta)^2/2}. \nonumber
\ee
As in the proof of \eqref{eq:spherelipschitz2}, this is equivalent to 
\be \label{strong-concave2}
\eta  \stackrel{?}{\leq}  \sqrt{a^2+ \frac{2 \ln 1.5}n } -a  = \frac 1{\sqrt{n}} \left(\sqrt{u^2+ 2 \ln 1.5 } -u\right),   
\ee
where $a=u/\sqrt{n}$. \otoh from the definition \eqref{delta} of $\eta$ and appealing to Lemma \ref{improved}, we deduce that 
\be \label{strong-concave2a}
\eta = \int_a^{\pi/2} \mu(K^{x}) \leq \frac 25 \int_a^{\pi/2} e^{-nx^2/2} \,dx < \frac 2{5\sqrt{n}} \int_u^{\infty} e^{-s^2/2} \,ds . 
\ee
The last integral in \eqref{strong-concave2a} can be expressed in terms of the Gaussian error function and investigated numerically. Alternatively, as in the proof of  \eqref{eq:spherelipschitz2}, we may use the Komatu bound \eqref{komatsu}, which reduces the problem to showing that, for $u\geq 0.5$, 
 \be \label{strong-concave3}
\frac 2{5} \times \frac {2  e^{-u^2/2}}{u+\sqrt{u^2+2}}\ \stackrel{?}{\leq} \  \sqrt{u^2+ 2 \ln 1.5 } -u =  \frac{2 \ln 1.5}{u+\sqrt{u^2+ 2 \ln 1.5 }} .
\ee
Siince \ $u+\sqrt{u^2+2} \geq u+\sqrt{u^2+ 2 \ln 1.5 }$, these two denominators can be discarded.  To complete the argument, it remains to verify the resulting inequality at $u= 0.5$.  (The inequality  \eqref{strong-concave3} actually holds for all $u\geq 0$, but showing that is not needed.)

\smallskip \noindent {\em Case $2^\circ$} $a \leq \frac 1{2\sqrt{n}}$\quad  In this case we can not use Lemma \ref{improved} to estimate $\eta$,  but -- as in the proof  of \eqref{eq:spherelipschitz2} --  the weaker bound \eqref{eq:sphereconstants2}  combined with Komatu's inequality \eqref{komatsu} will be -- for different reasons -- sufficient. In the notation from Case $1^\circ$, we have 
\[
\eta \leq \frac{1}{2} \int_a^{\pi/2} e^{-nx^2/2} \, dx < \frac{1}{2\sqrt{n}} \int_u^{\infty} e^{-s^2/2} \, ds < \frac{1}{\sqrt{n}} \times \frac {e^{-u^2/2}}{u+\sqrt{u^2+2}}.
\]
Consequently, 
\[
a + \eta <  \frac{1}{\sqrt{n}} \times \left(u + \frac {e^{-u^2/2}}{u+\sqrt{u^2+2}} \right) .
\]
It is readily verified that the expression in the parentheses is less than $1$ for $u\leq \frac 12$.  
In particular, for $0\leq u = a\sqrt{n} \leq \frac 12$, we get $a +\eta <  \frac{1}{\sqrt{n}}$, and so we are in the range of applicability of \eqref{concave}. 
(The argument is almost as clean if we use the more precise expression involving the Gaussian error function;  it yields the bound $a +\eta <  \frac{1}{\sqrt{n}}$ for $u\leq 0.69$.) 

Finally, let us recall that when $n=3$, the inequality \eqref{two-sided} was verified numerically, and -- in the range $a\geq 0$ -- it was never close. Alternatively, the general argument presented above can be easily patched up when specified to the instance $n=3$ (and only {\em Case $1^\circ$} requires patching).

\medskip It remains to handle the case $a<0$.  As in the context of the one-sided bound, it is then more transparent to rewrite the formula for $t$ as
\be\label{2sidedneg1}
 t=   \int_{b}^{\pi/2} \mu(K^x) \, dx .
\ee
where $b=-a \in [0,\pi/2]$ (see \eqref{eq:tAlt2}), while the inequality to be verified  (cf. \eqref{eq:tAlt3}) becomes
\be \label{2sidedneg2}
\mu(K^{b})-  \mu(K^{b+2t})\stackrel{?}{\geq} 1- e^{-nt^2/2} .
\ee

\smallskip \noindent  As for $a\geq 0$, we will consider separately the cases when $b$ is ``small'' and ``not-so-small.''

\medskip\noindent  {\em Case $1^\circ$,  small $b$}  : \ \ 
If $b$ is sufficiently small (to be made precise later), $\theta = b+2t$ will be within the range of applicability of inequality \eqref{improved2}, and the approach from (the proof of) Lemma \ref{improved} will work.    
Specifically, we can deduce  then that 
\be \label{2sidedneg2a}
\mu(K^{b})-  \mu(K^{b+2t}) = (2I_{n-2})^{-1}\int_b^{b+2t} \cos^{n-2}\theta \, d\theta \geq 
(2I_{n-2})^{-1}\int_b^{b+2t} e^{-n\theta^2/2} \, d\theta .
\ee
Substituting $s=\sqrt{n}\,\theta$, the inequality \eqref{2sidedneg2} reduces to
\be \label{2sidedneg3}
(2I_{n-2})^{-1} \times \frac 1{\sqrt{n}} \int_u^{u+2v} e^{-s^2/2} \, ds  \stackrel{?}{\geq} 1- e^{-v^2/2} ,
\ee
where $u = b\sqrt{n}$ and $v = t\sqrt{n}$.  At the same time,  as in Eqs. \eqref{eq:tAlt2}-\eqref{eq:tAlt6}, and in view of Lemma \ref{v_k-v_k+1} and Proposition \ref{prop-main}, 
\be \label{2sidedneg4}
v =v(b,n) \leq \sqrt{n}\, I_{n-1} \mu_{n+1}(K^b) \leq \frac { \sqrt{n}\, I_{n-1} }2 \, e^{-u^2/2}.
\ee
For future reference, let us note that \eqref{2sidedneg4} and Proposition \ref{wallis} imply immediately that $v(b,n)\leq  \frac { \sqrt{2}\, I_{1} }2=1/\sqrt{2} <1$. 

To summarize, we need to show that if $v$ satisfies the constraint \eqref{2sidedneg4}, then \eqref{2sidedneg3} holds for $u$ in the appropriate range.  Furthermore, since (for fixed $u$), $v\to \frac 1v \int_u^{u+2v} e^{-s^2/2} \, ds$ is decreasing, while $v \to  \frac 1v  \big(1- e^{-v^2/2}\big)$ is increasing (for $v\leq 1$), it is enough to consider the largest possible value of $v$, e.g., $v=\frac { \sqrt{n}\, I_{n-1} }2 \, e^{-u^2/2}$. Thus the problem is reduced to comparing two functions of $u$, which depend rather weakly on $n$ since, by Proposition \ref{wallis}, the coefficients  appearing in them satisfy  
$(2I_{n-2})^{-1} \times \frac 1{\sqrt{n}}  \to \frac 1{\sqrt{2\pi}}$ and $\frac { \sqrt{n}\, I_{n-1} }2 \to \sqrt{\frac {\pi}8}$. This suggests verifying first the asymptotic version of the statement, namely : 
\be \label{2sidedneg5}
v =  \sqrt{\frac {\pi}8} \, e^{-u^2/2} \stackrel{?}{\implies}  \frac 1{\sqrt{2\pi}} \int_u^{u+2v} e^{-s^2/2} \, ds \geq 1- e^{-v^2/2}. 
\ee
 A numerical check shows that this statement ``comfortably'' holds in the relevant $u$-range (say, $0\leq u\leq 3$), see Figure \ref{asymptotic}. 
 \begin{figure}[ht!]
\includegraphics[width=0.7\textwidth]{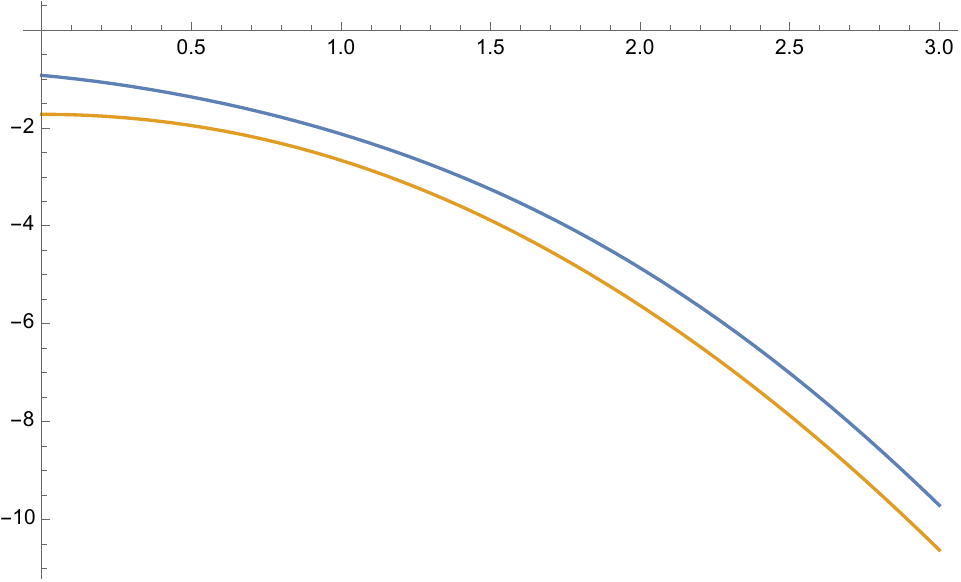}
\caption{Verification of \eqref{2sidedneg5}. The top and the bottom graph depict the logarithms of (respectively) the left and the right hand side of the inequality in \eqref{2sidedneg5}.}
\label{asymptotic}
\end{figure}
This implies that the inequality \eqref{2sidedneg3} holds under the constraint \eqref{2sidedneg4} if $n$ is large enough.  Moreover, since the sequences of coefficients are monotone (by Proposition \ref{wallis}), once we determine that  \eqref{2sidedneg3} holds for $n=n_0$,  it will follow that it is also valid for $n>n_0$. A direct check shows that this works for $n_0=3$, which is enough for our purposes. 

\smallskip To complete this part of the argument, we need to determine the range of the parameter $u$ that assures that 
\eqref{improved2} can be applied with $\theta = b+2t$ (that is, 
$b+2t \leq 3/\sqrt{n}$ or, equivalently, $u+2v \leq 3$ if $n\geq 5$, and similarly for $n=3,4$).  Using the bound \eqref{2sidedneg4} with $n=3$, we see that $u+2v \leq 2.67$ if $u\leq 2.52$. Since the sequence $(\sqrt{n}\,I_{n-1})$ decreases by Proposition \ref{wallis}, it follows that the same is true for $n>3$.  In other words, the method from  {\em Case $1^\circ$} works for $u\leq 2.52$. This is amply sufficient as the approach from {\em Case $2^\circ$} will cover the range  
$u\geq \sqrt{6}$, and $\sqrt{6} \approx 2.45 < 2.52$. 


\medskip\noindent  {\em Case $2^\circ$, not-so-small $b$}  : \ \ To handle the values of $u>\sqrt{6}$, which correspond to $b > \sqrt{6/n}$, we need a more specialized upper bound for $t$.  To that end, we  appeal to \eqref{cap-cos2}, which restated in the current setup becomes
\be \label{2sidedneg7}
\mu_n(K^x) \leq  (\sqrt{2\pi}\, \kappa_n \sin x)^{-1} \cos^{n-1} x .
\ee
Accordingly 
(cf. \eqref{2sidedneg1}) 
\begin{eqnarray}
t &\leq&\int_b^{\pi/2} (\sqrt{2\pi}\, \kappa_n \sin x)^{-1} \cos^{n-1} x \, dx \nonumber \\
&\leq &   (\sqrt{2\pi}\, \kappa_n \sin b)^{-1} \int_b^{\pi/2} \cos^{n-1}x \, dx \nonumber \\
&=&  (\sqrt{2\pi}\, \kappa_n \sin b)^{-1} \times 2I_{n-1} \, \mu_{n+1}(K^b) \nonumber \\
&\leq& \frac{I_{n-1}}{\pi  n \sin^2 b} \times \cos^n b,  \label{2sidedneg8}
\end{eqnarray}
where in the last inequality  we used (again) \eqref{2sidedneg7} and the identity $\kappa_n \kappa_{n+1} = n$. 

\smallskip We now argue similarly as in the one-sided context. The left-hand side of \eqref{2sidedneg2} will be lower-bounded by $2t \times (2I_{n-2})^{-1}  \cos^{n-2}(b+2t)$ (cf.\ \eqref{2sidedneg2a}) and the right-hand side upper-bounded by $nt^2/2$, which reduces the problem to 
\be \label{2sidedneg6}
 \cos^{n-2}(b+2t) \stackrel{?}{\geq}  I_{n-2} \times \frac{nt}2 .
\ee

Appealing to \eqref{2sidedneg8} allows further reduction to 
\be \label{2sidedneg9}
t  \leq \frac{I_{n-1}}{\pi  n \sin^2 b} \times \cos^n b \quad  \stackrel{?}{\implies} \quad \cos^{n-2}(b+2t) \geq   \frac{\cos^n b}{4(n-1)  \sin^2 b} , 
\ee
where we used $I_kI_{k+1}=\frac{\pi}{2(k-1)}$. 

As in the argument that led to \eqref{2sidedneg5}, let us consider first an asymptotic version of \eqref{2sidedneg9}. That is, substitute  $u = b\sqrt{n}$ and $v = t\sqrt{n}$ and let $n\to \infty$,  which leads to 
 \be \label{2sidedneg10}
u  \geq \sqrt{6} \ \hbox{ and } \ v  \leq \frac{e^{-u^2/2}}{\sqrt{2 \pi} u^2 } \quad  \stackrel{?}{\implies} \quad e^{-(u+2v)^2/2} \geq   \frac{e^{-u^2/2}}{4u^2} . 
\ee
To establish \eqref{2sidedneg10}, it is clearly enough to assume equality in  the constraint on $v$, and it is then apparent (numerically) that the inequality on the right comfortably holds. In fact, it does hold for $u\geq 0.84$ and, for $u \geq \sqrt{6}$, the ratio of the two sides is greater than $23$.  We can not, however, deduce immediately that \eqref{2sidedneg9} holds for sufficiently large $n$ since there is no obvious monotonicity with respect to $n$ and we do not know if the convergence involved in obtaining  \eqref{2sidedneg10} is appropriately uniform. Still, patching the calculation is rather routine; we sketch the main points in Appendix 
\ref{eq60}. \qed

\subsection{The Gaussian case : Proof of Corollary \ref{cor:gaussian}} 
  The Gaussian isoperimetric inequality \cite{Borell, ST} reduces the problem to $n=1$.  
The obvious line of argument is now to invoke some version of the Poincar\'e Lemma  (appropriately normalized marginals of $\mu= \mu_n$ converge, as $n\to \infty$, to the  normal distribution) and then appeal to  \eqref{eq:spherelipschitz3}, but there are some minor technical issues that need to be addressed.  First,  $\Theta_n$,   the random variable distributed according to $\nu_n$, is not exactly a marginal of $\mu_n$ (the marginals are parametrized by $\sin \theta$ rather than by $\theta$).  Next, we have to make sure that the convergence of  $\sqrt{n} \Theta_n$ to the standard normal $Z$ preserves probabilities and moments.  An elementary way to resolve these issues is to consider the density of  $\sqrt{n} \Theta_n$,  which is (on its support) $g_n(\theta):=(2 I_{n-2} \sqrt{n})^{-1} \cos^{n-2}(\theta/\sqrt{n})$.  Once we take into account the properties of $I_m$ stated in Proposition \ref{wallis}, it is an elementary exercise to show that $g_n(\theta) \to (2\pi)^{-1/2}e^{-\theta^2/2}$ (the density of $Z$), and that this convergence is dominated in a rather strong sense:  we have $0\leq g_n(\theta) \leq (2\pi)^{-1/2} e^{-\frac{n-2}{2n} \theta^2} \leq  (2\pi)^{-1/2 } e^{-\theta^2/6}$ for all $\theta$ and $n\geq 3$.   The dominated convergence theorem implies then the convergence of all probabilities and all moments. \qed

\medskip An alternative line of argument is to appeal to Lemma \ref{extreme2} and then compare $\P(|Z - \E Z| > t)$ to $e^{-t^2/2}$, where $t=t(a) = a+ \int_a^{\infty} \gamma_1\big([x,\infty)\big) \, dx$. Since all these quantities can be expressed in terms of the Gaussian error function, there is no problem with a numerical verification, see Figure \ref{gaussian}.  For complete rigor, this should be accompanied by an asymptotic analysis as $a \to \pm \infty$.

 \begin{figure}[ht!]
\includegraphics[width=0.75\textwidth]{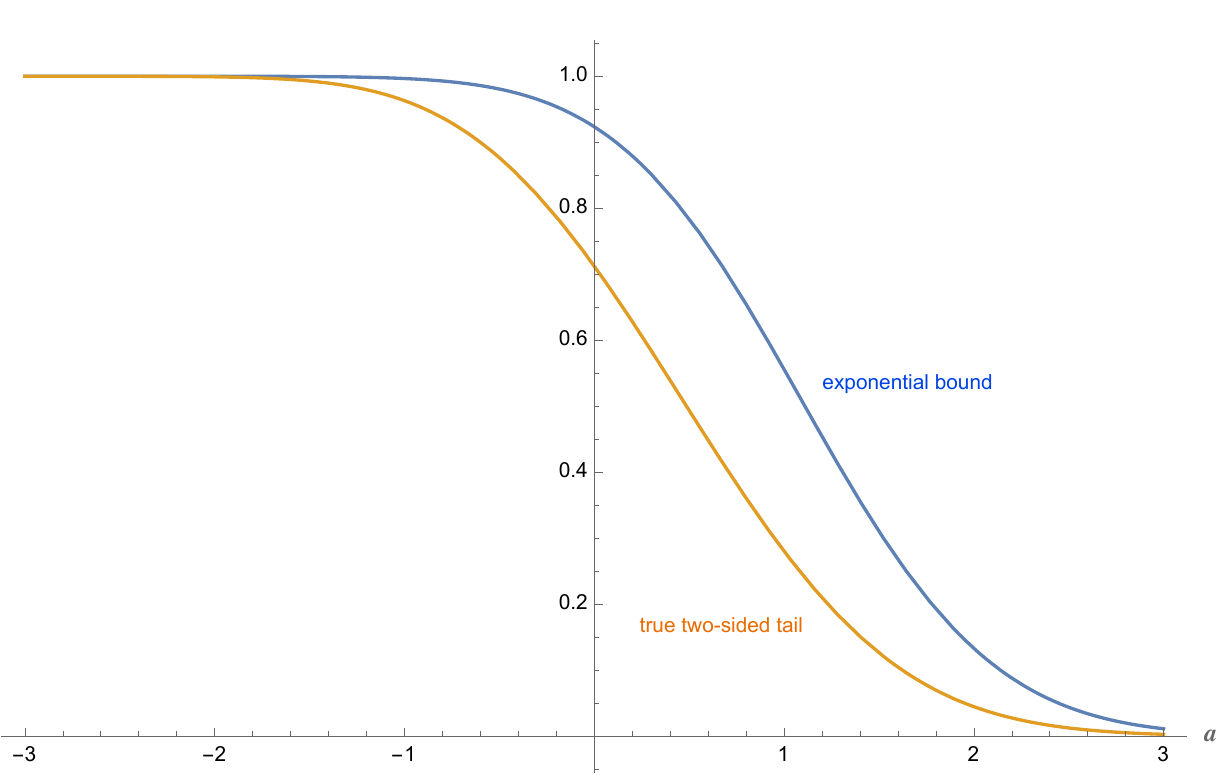}
\caption{The Gaussian case : the  comparison of the true two-sided tail given by the extremal functions $\phi_a$ and the asserted exponential bound (both as a function of $a$). \label{gaussian}}
\end{figure}
Finally, one could redo in the Gaussian setting the rather rigorous calculations that were performed in the spherical case. These would be substantially easier since we do not need to worry about the dependence on the dimension. As a matter of fact, we did some such calculations to provide heuristics for the spherical case. However, an argument of that nature wouldn't be pretty.  It would be good to have a neat proof based on standard properties of the  Gaussian error function, perhaps along the lines of \cite{corr} or the follow-up papers \cite{Kouba, RW}.

\section{Products of spheres} \label{products}

In this section we will discuss perspectives for improving isoperimetric/concentration inequalities on $\big(S^{n-1}\big)^k$ for $k >1$.  The discussion is somewhat exploratory in nature, with some results based on numerics and many estimates presumably not optimal, and is intended to encourage further research.

As is well known, Fact \ref{fact:MS} generalizes to products $S^{n+1} \times S^{n+1} \times \ldots S^{n+1}$ with arbitrary number of factors (see, e.g., \cite{MS86}, section 6.5.2; note, however, that he family discussed there should involve $S^{n+1}$ and not $S^{n}$). This is because Ricci curvature $R(S^{n+1})$ is $n$, and consequently the same is true for the product and one may apply the following comparison result due to Gromov (see section 6.4 and Appendix I in \cite{MS86}). 

\begin{fact}\label{fact:Gromov} 
Let $X$ be a an $m$-dimensional Riemannian manifold, whose Ricci curvature $R(X)$ is bounded from below by $\kappa >0$. Choose $r>0$ so that $R(rS^m)=(m-1)/r^2 = \kappa$. 
Denote by  $\mu_X$  the  normalized Riemannian measure on $X$ and by $\mu$ the normalized Haar (surface) measure on the sphere $rS^m$.  Next, let $A\subset X$,  $t>0$ and $B\subset rS^m$ be a cap such that $\mu_X(A)=\mu(B)$. Then $\mu_X(A_t)\geq\mu(B_t)$. 
\end{fact}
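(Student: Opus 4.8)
The plan is to reduce the enlargement statement to the corresponding \emph{isoperimetric} inequality and then prove the latter by the standard curvature-comparison machinery. Write $I_X$ for the isoperimetric profile of $(X,\mu_X)$, i.e.\ $I_X(v)=\inf\{\mu_X^+(\partial\Omega):\mu_X(\Omega)=v\}$, where $\mu_X^+$ denotes the normalized outer Minkowski content, and let $I=I_{rS^m}$ be the profile of the model sphere. Since $B$ is a geodesic cap and caps are isoperimetric on the round sphere (L\'evy's theorem, itself the base case of the argument below), the desired bound $\mu_X(A_t)\ge\mu(B_t)$ follows from the \emph{profile comparison} $I_X\ge I$ by a soft ODE argument: $t\mapsto\mu_X(A_t)$ is Lipschitz with a.e.\ derivative at least $\mu_X^+(\partial A_t)\ge I_X(\mu_X(A_t))\ge I(\mu_X(A_t))$, while $t\mapsto\mu(B_t)$ realizes the corresponding equality problem, so a Gronwall/comparison-of-solutions argument starting from $\mu_X(A)=\mu(B)$ gives $\mu_X(A_t)\ge\mu(B_t)$ for every $t>0$. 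Thus the whole content is in establishing $I_X\ge I$.

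To prove $I_X\ge I$, first invoke geometric measure theory: for each $v$ there is an isoperimetric region $\Omega$ realizing $I_X(v)$, whose reduced boundary $\Sigma$ is a smooth embedded hypersurface of \emph{constant} mean curvature $H$ away from a closed singular set of Hausdorff dimension at most $m-8$ (so $\Sigma$ is smooth when $m\le 7$, and the singular set is negligible in the estimates below). Then analyze the one-sided tube around $\Sigma$: along unit-speed normal geodesics the trace of the shape operator of the level sets of $\mathrm{dist}(\cdot,\Sigma)$ satisfies the Riccati inequality $H'\le -\tfrac{H^2}{m-1}-\mathrm{Ric}(\dot\gamma,\dot\gamma)\le -\tfrac{H^2}{m-1}-\kappa$, and integrating this (a Heintze--Karcher-type computation) bounds $\mu_X(\Omega_s)$ and $\mu_X^+(\partial\Omega_s)$ in terms of $H$ and $\kappa$, exactly as in the model $rS^m$ where $R(rS^m)=\kappa$. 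Optimizing over the free parameter $H$ (which the first variation pins down as $H=I_X'(v)$) shows that $v\mapsto I_X(v)$ obeys the same differential inequality that the model profile $I$ satisfies with equality; since the two profiles agree at $v=0,1$, one last comparison-of-solutions step yields $I_X\ge I$.

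An alternative, in some ways cleaner, route is the \emph{localization} (needle-decomposition) method: partition $(X,\mu_X)$, up to a null set, into geodesic segments (``needles'') each carrying a one-dimensional density satisfying the curvature--dimension condition $\mathrm{CD}(\kappa,m)$, with the partition adapted to a guiding function chosen so that the isoperimetric problem on $X$ reduces to the same problem on each needle; one then solves the one-dimensional model problem, whose extremizer is a subinterval with profile dominated by that of $rS^m$. This bypasses regularity theory but needs the machinery of $L^1$-optimal transport on manifolds to build the decomposition.

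The main obstacle, in either approach, is precisely the deep part that makes this result one to quote rather than reprove: in the first approach, the regularity theory for isoperimetric minimizers together with the care needed to certify that the singular set does not spoil the comparison; in the second, the construction of the needle decomposition. The remaining ingredients — the reduction to the profile inequality, the Riccati/Heintze--Karcher computation on the model, the one-dimensional model problem, and the final ODE comparison — are routine once those foundations are granted.
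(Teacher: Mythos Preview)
The paper does not prove this statement: it is labeled a \emph{Fact}, attributed to Gromov, and cited from section~6.4 and Appendix~I of \cite{MS86} without further argument. So there is no ``paper's own proof'' to compare against.

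That said, your outline is a reasonable sketch of the standard approaches. The first route --- reduce the enlargement inequality to the profile comparison $I_X\ge I$ via an ODE/Gronwall argument, then prove the profile comparison by analyzing normal tubes around a minimizer via the Riccati inequality and a Heintze--Karcher-type volume estimate --- is essentially Gromov's original argument as recorded in the appendix to \cite{MS86} (though Gromov's presentation there works somewhat more directly with the tube geometry and does not foreground the GMT regularity theory the way you do). Your second route, via needle decomposition, is a more recent alternative (Klartag, Cavalletti--Mondino) that indeed bypasses regularity at the cost of the $L^1$-transport machinery. You also correctly identify the honest status of this result: the deep ingredients (regularity of minimizers, or construction of the needle decomposition) are exactly why one quotes it rather than reproves it, which is precisely what the paper does.

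One small caution on the reduction step: the claim that $t\mapsto\mu_X(A_t)$ is Lipschitz with a.e.\ derivative at least $I_X(\mu_X(A_t))$ is morally right but requires a bit of care (lower semicontinuity of perimeter, coarea-type arguments) to make rigorous for arbitrary measurable $A$; this is routine but should not be stated as if it were automatic.
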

As earlier, $B_t$ is again a cap and so its volume can be -- after rescaling by $r$ -- expressed as an integral of the type \eqref{cap}. 
In particular, if $\mu_X(A)=\frac 12$, then 
\[
\mu_X(A_t)\geq 1- \frac{\int_{t/r}^{\pi/2} \cos^{m-1}\theta \, d\theta}{2I_{m-1}} = 1- q_{m+1}(t/r) , 
\]
where $q_{m+1}(\cdot)$ is defined by \eqref{qn}. Specifying further to $X=\big(S^{n-1}\big)^k$ (with $n>2$), whose Ricci curvature is $\kappa=n-2$, we are led to 
\be \label{rescale}
m=(n-1)k \quad  \hbox{ and } \quad r=\sqrt{\frac{m-1}{\kappa}}=\sqrt{\frac{(n-1)k-1}{n-2}},
\ee 
and -- after appealing to the bound \eqref{eq:sphereconstants} from Theorem \ref{sphereconstants}  --  to
\bprop \label{prop-k}
Let $n >2$, $k\geq 2$, and let $\sigma$ be the normalized product measure on $\big(S^{n-1}\big)^k$.  Next,  let $A \subset \big(S^{n-1}\big)^k$ be such that  
$\sigma(A) \geq \frac 12$. If $t>0$, then 
\be \label{eq:prop-k}
\sigma(A_t) \geq  1 -  \frac 12 \; q_{m+1}(t/r)  \geq  1 - \frac 12 \exp\Big(-\frac{\big((n-1)k+1\big)(n-2)}{(n-1)k-1}\, \frac{t^2}{2}\,\Big) . 
\ee
\eprop
Since the fraction inside $\exp$ is clearly greater than $n-2$, it follows that $\sigma(A_t) \geq 1-  \frac{1}{2}\,e^{-(n-2)t^2/2}$, which is slightly better than the estimate from \cite{MS86} mentioned at the beginning of this section: the multiplicative constant $\frac 12$ instead of $\sqrt{\pi/8}$ (this is because we are using Theorem \ref{sphereconstants}  and not Fact \ref{fact:MS}). There is also a slight improvement in the coefficient of $t^2$ in the exponent, but it becomes less and less significant as $k$ increases. In order to be able to deduce the same bound as in Theorem \ref{sphereconstants} for $k\geq 2$, we need a stronger version of \eqref{qn} (or, equivalently, of \eqref{eq:sphereconstants} or \eqref{eq:sphereconstants2})  with an appropriate ``excess'' in the exponent to compensate for the coefficient of $\frac{t^2}2$  in \eqref{eq:prop-k} being strictly smaller than $n$.  
Specifically, define
\begin{equation}\label{qnk}
q_{n,\xi}(x):= \frac{\int_x^{\pi/2} \cos^{n-2}\theta \, d\theta}{ I_{n-2}} e^{(n+\xi)x^2/2} = q_{n}(x) e^{\xi x^2/2} .
\end{equation}
and suppose that the inequality 
\be \label{excess}
q_{m+1,\xi}(x)  \leq 1
\ee
is valid with $m=(n-1)k$ and with $\xi$ such that $\frac{\big((n-1)k+1+\xi\big)(n-2)}{(n-1)k-1} = n$. Then, repeating {\em mutatis mutandis} the argument that led to Proposition \ref{prop-k}, we obtain -- for this particular choice of $n, k$, and for the respective values of $t$ -- an improvement to \eqref{eq:prop-k} with  $1 - \frac 12 \exp\big(-nt^2/2\big)$ on the right hand side, which is a much neater expression. 

As it turns out, as speculative as the bound \eqref{excess} appears, it is not unreasonable. 
Of course, as already pointed out in the Remark following  the proof of Theorem \ref{sphereconstants}, 
we cannot expect it to hold for all $m$ and all $x$, but it may conceivably be true for $m\geq m(\xi)$. 
Below is an analysis showing that validating \eqref{excess} is actually quite feasible. 

We note first that the equation $\frac{\big((n-1)k+1+\xi\big)(n-2)}{(n-1)k-1} = n$  resolves to 
$\xi = 2(k-1)\frac{n-1}{n-2}$ or $\xi = 2(k-1)\frac{m}{m-k}$. The last two expressions are decreasing functions of respectively $n$ or $m$, which means that the threshold value for the excess $\xi$ that is sufficient for our purposes can be chosen as a function of $k$ only. Next, it follows immediately from the definition \eqref{qnk} of  $q_{n,\xi}$ and from \eqref{n+2n} that $q_{n+2,\xi}(x)  \leq q_{n,\xi}(x)$ for all $n, \xi$ and $x$. 
Thus, for each $x\in[0,\pi/2]$ and for all $\xi\geq0$,  
the sequences $(q_{2n,\xi}(x))$ 
and $(q_{2n+1,\xi}(x))$ are nonincreasing. This means that, given $\xi \geq 0$, once we establish \eqref{excess} for certain $m_0$ and $m_0+1$, it will be valid for all $m\geq m_0$, and consequently for all sufficiently large $n$ (with the last qualification depending on $k$). 

A numerical check indicates  that $q_{4,1}(x) \leq 1$ and $q_{6,2}(x) \leq 1$ for $x\in[0,\pi/2]$, and that such inequalities do not generally hold for $q_{3,1}$ and $q_{5,2}$. This suggests 
that, in the setting of Theorem \ref{sphereconstants}, the 
 bound $\mu(A_t )\geq 1- \frac{1}{2}e^{-t ^2(n+1)/2}$ is valid for $n\geq 4$ 
 and the bound $\mu(A_t )\geq 1- \frac{1}{2}e^{-t ^2(n+2)/2}$ is valid for $n\geq 6$. 
 It would be interesting to rigorously determine the threshold values $n=n(\xi)$,
 in addition to the numerical results indicated above and further explored below. 

As a demonstration, let us focus on the instance $k=2$. A numerical check using {\em Mathematica} shows that in that case:  

\smallskip
$\bullet$ if $n=3$, hence $m=4$ and $\xi = 4$, then \eqref{excess} holds for $x \not\in (0.47595,1.45105)$; taking into account the rescaling $x=t/r$ we deduce (cf. \eqref{rescale}) that, in the setting of Proposition \ref{prop-k}  the bound $\sigma(A_t) \geq  1 -  \frac 12 e^{-nt^2/2}$ holds for 
$t\leq 0.82437$

\smallskip
$\bullet$ if $n=4$, hence $m=6$ and $\xi = 3$, then \eqref{excess} and all the subsequent bounds hold for $x \not\in (0.71556,1.19952)$,  so (after rescaling) we deduce  that   $\sigma(A_t) \geq  1 -  \frac 12 e^{-nt^2/2}$  for  $t\leq 1.1314$; however, if we use the chordal distance instead of the geodesic distance, then the bounds hold in the entire range 

\smallskip
$\bullet$ if $n=5$, hence $m=8$ and $\xi = 8/3$, then \eqref{excess} and all the subsequent bounds hold in the entire respective range 

\smallskip
$\bullet$ if $n>5$, the same holds by monotonicity; note that for $k=2$ all $m$'s are even, and so our inductive ``initialization'' requires verifying only one value $m_0$. 

\smallskip
The above considerations can be summarized in the following statement. 
\bt
Let $\sigma$ be the normalized product measure on $\big(S^{n-1}\big)^2$ and let $A \subset \big(S^{n-1}\big)^2$ be such that  
$\sigma(A) \geq \frac 12$. If $t\geq 0$ and $n>4$, then 
\[
\sigma(A_t) \geq  1 -  \frac 12 e^{-nt^2/2} . 
\] 
If $n=3$ and $n=4$, the above bound holds for, respectively, $t\leq 0.82437$ and $t\leq 1.1314$. Additionally, the bound holds in the entire range if $n=4$ and if the enlargements $A_t$ are defined via the chordal distance rather than the geodesic distance. 
\et

For $k=3$, the bound \eqref{excess} holds for $n>6$ and for $n=6$ with the chordal distance; for $k=4$ for $n>7$ and for $n=6,7$ with the chordal distance. So it appears that the threshold for allowable dimensions $n$ increases with $k$, possibly unboundedly.  
Again,  it would be interesting to rigorously determine the dependence of the allowable range of $n$ as a function of $k$ (assuming it indeed does not stabilize, which would be a desirable property, but not very likely in view of the above numerical results).  Another useful -- and perhaps not that hard -- result would be a good universal lower bound on $b$ such that the bounds hold for $x\in [0,b]$, or for $t\in [0,b]$. Numerics suggest that those intervals are never {\em really} small. 

The final remark is that -- unlike in the case $k=1$ -- we do not have an exact calculation, based on the knowledge of extremal subsets, but one that relies on the Gromov's comparison theorem (Fact \ref{fact:Gromov}). 
While there are many very sophisticated approaches to isoperimetric problems on product spaces (e.g. \cite{Schechtman,talagrand}),  we are not aware of the {\em precise} solution to the problem even in the case of the torus $(S^1)^k$.  So it is possible, and quite likely,  that in reality the bounds hold for a larger set of parameters than  what follows from the argument above. It may be feasible to test, e.g., $k=2$ and $n=3$ or $n=4$ by looking at some specific sets $A$ and the (relatively large) values of $x$ or $t$ suggested by the numerics leading to the results described above. 

{\small \vskip6mm \noindent {\bf Acknowledgements.} GA was supported in part by ANR (France) under the grant ESQuisses (ANR-20-CE47-0014-01). 
The research of JJ and SJS has been supported in part by grants from the {\itshape National Science Foundation (U.S.A.)}.}

\vskip1cm

\section{Appendix }

\subsection{Achievability of tail estimates in Lemma \ref{extreme}} \label{achieve}

Here we sketch a proof of the assertion from the Remark following the proof of Lemma \ref{extreme}, 
which said that the supremum in \eqref{opt} is attained for some $\psi=\phi_a$ and, moreover, 
for $a$ verifying $t= a-\E \phi_a$.  In view of \eqref{opt2}, this reduces to the analysis of the family $\{\phi_a : a\in \R\}$ 
without having to consider a general $\psi \in \mathcal{L}$. We will start with a series of elementary observations.

\smallskip \noindent $1^\circ$   Since $\phi_a \leq a$, it follows that $\E \phi_a \leq a$, with equality iff  $\nu$ is supported on $[a,\infty)$. 

\smallskip \noindent $2^\circ$   Since $\|\phi_a-\phi_b\|_\infty = |a-b|$ and $\phi_a\leq \phi_b$ if $a\leq b$, the function $a \to \E \phi_a$ is $1$-Lipschitz and non-decreasing. 
Consequently, the same is true for $a \to \E \phi_a +t$. 

\smallskip \noindent $3^\circ$   Similarly, $a \to a-\E \phi_a$ is nondecreasing; this follows from  $a-\E \phi_a = \int (a-x)^+ d\nu(x)$ and $a\to (a-x)^+$ being a nondecreasing function of $a$ (for each fixed $x$). 

\smallskip \noindent $4^\circ$  If $t>0$, then the supremum in \eqref{opt} is strictly smaller than $1$. 
 The hypothesis $M:= \int |x|\, d\nu(x) < \infty$  implies that  $\forall \ep >0 \ \exists \delta >0$ such that if $\nu(A)< \delta$, then $\int_A |x| d\nu(x) < \ep$. 
Let $\psi \in \mathcal{L}$ and suppose (as we can) that $f(0)=0$, which implies $|\psi(x)|\leq |x|$. 
Denote $A=U(\psi,t)^c$; our objective is to show that $\nu(A)$ can not be too small.  We have
$$
\E \psi = \int_{U(\psi ,t)} \psi \, d\nu + \int_A \psi\, d\nu \geq \nu\big(U(\psi ,t)\big)(\E \psi + t) -\int_A |\psi |\, d\nu ,
$$
which can be rewritten as  
$$
\nu(A)\, \E \psi    \geq \big(1-\nu(A)\big) t  -  \int_A |\psi |\, d\nu.
$$
Now, set $\ep = \frac t4$ and choose the corresponding $\delta \in (0, \frac 12)$. If  $\nu(A) < \delta$, then $\big(1-\nu(A)\big) t  -  \int_A |\psi |\, d\nu > \frac t2 - \ep = \frac t4$, while 
$\nu(A) \E \psi   \leq \nu(A) \E |\psi | \leq \nu(A) M$ and so
$$
\frac t4 < \nu(A) M .
$$
In other words, either  $\nu(A) \geq \delta$, or  $\nu(A) \geq \frac t{4M}$, so $\nu(A) \geq \min\{\delta,\frac t{4M}\} > 0$, as asserted. \\
{\em Note} : The assertion $4^\circ$ will follow independently from the other observations, but we include it here since the argument works for any $1$-Lipschitz function $\psi $ and not just for $\psi \in \mathcal{L}$.

\smallskip \noindent $5^\circ$  $\lim_{a\to +\infty} \E\phi_a = \int x \, d\nu(x)$ and $\lim_{a\to -\infty} a- \E\phi_a=0$. Both of these follow from $\int |x| \, d\nu(x) < \infty$ via the dominated convergence theorem. 

\smallskip With this preparation, the conclusion is very easy.  Denote $\xi(a) := \E\phi_a+t$, $a\in \R$. By  $2^\circ$, the function $\xi$ is $1$-Lipschitz and non-decreasing. By $5^\circ$, it has an oblique asymptote $\ell_-(a)=a+t$ as $a\to -\infty$ and a horizontal asymptote $\ell_+(a)=\int x \, d\nu(x)+t$ as $a\to +\infty$.  Since, by $3^\circ$, $\xi(a)-a$ is nonincreasing, it follows that there is a unique value $a_0$ such that \\
$\bullet$ \ $\xi(a) > a$ if $a< a_0$\\
$\bullet$ \ $\xi(a) \leq a$ if $a \geq a_0$, with equality if $a= a_0$. \\
Let us decode what these inequalities mean. First, $\xi(a) > a$ means $\E\phi_a+t > a \geq \phi_a$; in that case $U(\phi_a,t)=\{\phi_a \geq \E\phi_a+t\} = \emptyset$. \otoh if $\xi(a) =\E\phi_a+t \leq a$, then $U(\phi_a,t)= [\xi(a), \infty)$. Since, again, $\xi$ is non-decreasing by $2^\circ$, the largest value of $\nu( [\xi(a), \infty))$ will be attained for the smallest value of $a$ for which 
$\xi(a) = a$, i.e., for $a=a_0$, and the condition $\xi(a) = a$ means precisely $\E\phi_a+t = a$ or $t= a-\E \phi_a$. \qed

\subsection{ Proof of Lemma \ref{extreme2}} \label{lemma11}

 We fix $t>0$ and proceed in several steps.  

\smallskip \noindent {\it Step $1^\circ$} First, there is the essentially trivial observation that -- from the point of view of estimating $\lambda(\psi,t):= \nu(\{|\psi - \E \psi | \geq t\})$ -- the function $x\to f(x)$ is equivalent to $x\to f(x)+c$  (for any $c\in \bbR$) and, due to the symmetry of $\nu$, to $x\to -f(-x)$.  

\smallskip \noindent {\it Step $2^\circ$}  Second, extremal functions (i.e., such that $\lambda(\cdot,t)$ is maximal) do exist.  Indeed, suppose that $(\psi _k)$ is a sequence of functions for which $\lambda(\psi_k,t)\to \max_{\psi \in \mathcal{L}} \lambda(\psi,t)=: \Lambda$.  By the previous remark, we may assume that $\psi_k(0)=0$ for all $k$, and it then follows that there is a subsequence $(\psi_{k_i})$ of $(\psi _k)$ converging uniformly on bounded intervals\footnote{In our setting, the measures $\nu=\nu_n$ are all supported on $[-\pi/2,\pi/2]$, which makes the argument even more straightforward.} 
 to some function $\psi$, which necessarily belongs to $\mathcal{L}$.  Since $\int |x| d\nu(x) < \infty$, it follows from the dominated convergence theorem that $\E \psi_{k_i} \to \E \psi$. 
This implies that $\{|\psi  - \E \psi  | \geq t\} \supset \limsup_i \{| \psi_{k_i}  - \E \psi_{k_i}| \geq t\}$ and, consequently, that $\lambda(\psi,t) \geq \limsup_i\lambda(\psi_{k_i},t) = \Lambda$.  So the limit function $\psi$ is extremal.

\smallskip \noindent {\it Step $3^\circ$}   The next observation is slightly less trivial; it gives the first hint why functions of the form \eqref{phi_a}  may be extremal. Let $\psi \in \mathcal{L}$ and let $\alpha$ be defined by  $\psi(\alpha) = \E \psi$ (it exists by the intermediate value theorem,  see Figure \ref{psilambda} for this and the subsequent steps).  
We claim that if $\psi$ is extremal, then (at least in all cases of interest, to be clarified later)
\be \label{linear}
\psi(\theta) - \psi(\alpha) = \theta -\alpha \ \hbox{ for }  \ \theta \in [\alpha-t,\alpha+t]. 
 \ee
 If this is not the case, then, denoting $\theta_0 := \max\{ \theta : \psi(\theta)  = \E \psi -t \}$  and 
 $\theta_1 := \min\{ \theta : \psi(\theta)  = \E \psi +t \}$, we will have $\theta_1-\theta_0 > 2t$. We now set $\alpha_0:= \theta_0+t, \alpha_1:= \theta_1-t$  (so $\alpha_0<\alpha_1$) and define $\psi_0$ and $\psi_1$ as follows
 
  \bes
 \psi_0(\theta) = \left\{  \begin{array}{ll} \E \psi  +\theta-\alpha_0&{\rm if } \ \theta \in [\alpha_0-t,\alpha_0 +t]\\
\E \psi +t  &{\rm if } \  \theta \in [\alpha_0 +t, \theta_1]\\
\psi(\theta) &{\rm otherwise } 
\end{array} \right.\hskip-3mm, \ 
\  \psi_1(\theta) = \left\{ \begin{array}{ll}  
 \E \psi -t  &{\rm if } \  \theta \in [\theta_0 , \alpha_1-t]\\
\E \psi +\theta-\alpha_1&{\rm if } \ \theta \in [\alpha_1-t, \alpha_1+t]\\
\psi(\theta) &{\rm otherwise } 
\end{array} \right. 
 \ees 

\bigskip

\begin{figure}[ht!]
\centering
    \begin{tikzpicture}
    \coordinate (A) at (0,2);
    \coordinate (B) at (3,2);
    \coordinate (C) at (4.8,2);
    \coordinate (D) at (2,5);
    \coordinate (E) at (5,5);
    \coordinate (F) at (6.8,5);
    \coordinate (AD) at (1,3.5);
    \coordinate (BE) at (4,3.5);
    \coordinate (CF) at (5.8,3.5);

    \coordinate (l1) at (-1,0);
    \coordinate (r1) at (8,0);
    \coordinate (l2) at (-1,2);
    \coordinate (r2) at (8,2);
    \coordinate (l3) at (-1,3.5);
    \coordinate (r3) at (8,3.5);
    \coordinate (l4) at (-1,5);
    \coordinate (r4) at (8,5);
    
    \draw[dotted] (l1) -- (r1);
    \draw[dotted] (l2) -- (r2);
    \draw[dotted] (l3) -- (r3);
    \draw[dotted] (l4) -- (r4);
    \node[right] at (r2) {$\E \psi -t$};
    \node[right] at (r3) {$\E \psi = \psi(a) = \E \psi_s$};
    \node[right] at (r4) {$\E \psi + t$};
    
    \draw[blue, very thick] (A) -- (B) -- (C) -- (F);
    \draw[green, very thick] (B) -- (E);
    \draw[red, very thick] (A) -- (D) -- (E) -- (F);
    
    \draw[dashed] (A) -- ++(0,-2);
    \node[below] at (0,0) {$\theta_0$};
    \draw[dashed] (AD) -- ++(0,-3.5);
    \node[below] at (1,0) {$\alpha_0$};
    \node at (2.1,3.5) {$\bullet$};
    \draw[dashed] (2.1,3.5) -- ++(0,-3.5);
    \node[below] at (2.1,0) {$\alpha$};
    \draw[dashed] (B) -- ++(0,-2);
    \node[below] at (3,0) {$\alpha_s\!-\!t$};
    \draw[dashed] (BE) -- ++(0,-3.5);
    \node[below] at (4,0) {$\alpha_s$};
    \draw[dashed] (E) -- ++(0,-5);
    \node[below] at (5,0) {$\alpha_s+t$};
    \draw[dashed] (CF) -- ++(0,-3.5);
    \node[below] at (5.8,0) {$\alpha_1$};
    \draw[dashed] (F) -- ++(0,-5);
    \node[below] at (6.8,0) {$\theta_1$};

    \draw plot [smooth] coordinates {(-1,1) (A) (2.1,3.5) (2.5,4) (4,4.2) (F) (8,6)};

    \draw[red,->] (0.8,4.2) -- (1.2,4);
    \node[left,red] at (0.9,4.3) {$\psi_0$};

    \draw[green,->] (3.1,3) -- (3.5,3);
    \node[left,green] at (3.2,3) {$\psi_s$};

    \draw[blue,->] (6.3,3) -- (5.9,3.2) ;
    \node[right,blue] at (6.2,3) {$\psi_1$};

    \node[above] at (7.5,5.5) {$\psi$};   

    \end{tikzpicture}
\caption{The functions  $\psi$, $\psi_0$,  $\psi_1$, and $\psi_s$. Note that $\a_0=\theta_0+t$ and $\theta_1=\a_1+t$. }
\label{psilambda}
\end{figure}
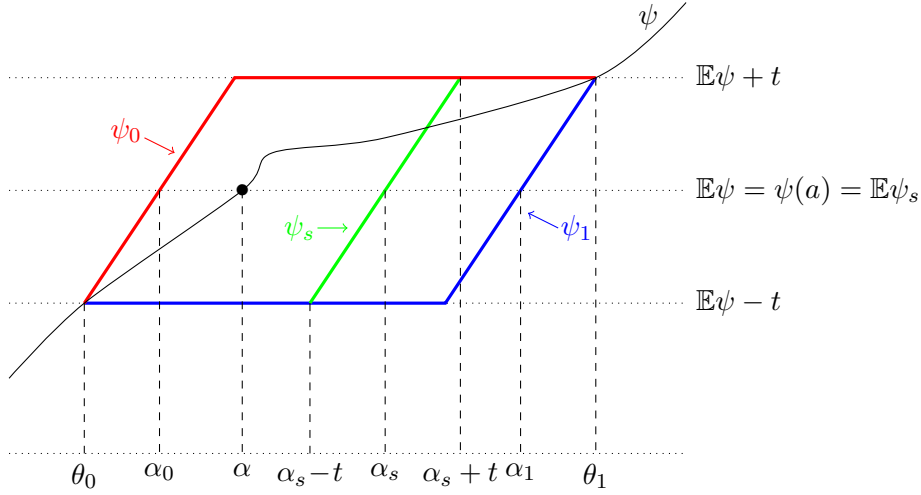
 
\bigskip \noindent Then $\psi_0, \psi_1$ are $1$-Lipschitz, $\psi_0 \geq \psi \geq \psi_1$, and so $\E \psi_0 \geq \E \psi \geq \E \psi_1$. Consequently (again, see Figure \ref{psilambda}), there is an intermediate function $\psi_s$ for some $s \in [0,1]$ such that

\medskip \noindent  
$\bullet$  $\psi_s(\theta) = \E \psi  +\theta-\alpha_s$ \ for \  $\theta \in [\alpha_s-t,\alpha_s +t]$  \\
$\bullet$  $\psi_s(\theta) =  \E \psi -t$ for \  $\theta \in [\theta_0,\alpha_s -t]$ and   $\psi_s(\theta) = \E \psi +t$ for \  $\theta \in [\alpha_s +t, \theta_1]$\\
$\bullet$ $\E \psi_s=\E \psi$

\smallskip \noindent As a consequence of these properties,   the set $\{|\psi_s-\E \psi_s| < t\}=(\alpha_s-t,\alpha_s +t)$ is strictly contained  in the set $\{|\psi-\E \psi| < t\}= (\theta_0,\theta_1)$ and so (again, in all cases of interest) $\nu\big(\{|\psi_s-\E \psi_s| \geq t\}\big) > \nu\big(|\psi-\E \psi | \geq t\big)$. This means that such $\psi$ can not be extremal for the two-sided problem for this particular value of $t$ and shows that an extremal function must satisfy \eqref{linear}. An alternative take on this argument is that we just produced an extremal function, namely $\psi_s$, for which \eqref{linear} holds (this doesn't even require that the inequality stated earlier in this paragraph is strict). 

In the above argument we tacitly assumed that $\theta_0$ and $\theta_1$ existed (i.e., the sets appearing in their definitions were nonempty) and that they belonged to the support of $\nu$; this is what we meant by ``cases of interest.''  However, if -- for example -- the set $\{ \theta : \psi(\theta)  = \E \psi -t \}$  was empty, then it would follow that in fact  $\psi(\theta)  > \E \psi -t$   for all $\theta$ and, consequently, 
$\{| \psi  - \E \psi| \geq t \} = \{ \psi  - \E \psi \geq t \} $. This means that we would be back to the one-sided problem, for which we know that the he functions $\phi_a$ are extremal.  
Another caveat is that if the interval $(\theta_0,\theta_1)$ was not included in $[-\pi/2,\pi/2]$ (the support of the measure $\nu$ from \eqref{def:nu}), it might happen that  $\mu(\{|\psi_s-\E \psi_s| < t\}) = \nu\big( (\alpha_s-t,\alpha_s +t) \big)$  is the same as $\mu(\{|\psi-\E \psi| < t\}) = \nu\big( (\theta_0,\theta_1) \big)$. Again, this is not a problem. First, we can replace one putatively extremal function $\psi$ by another one by modifying it outside of the support of $\nu$, which has no effect on the quantities under consideration. Next,  $(\theta_0,\theta_1) \not\subset [-\pi/2,\pi/2]$ means that we are again {\em de facto} in the setting of the one-sided problem. 

\smallskip \noindent {\it Step $4^\circ$}  To summarize the analysis up to this point, the extremal functions $\psi$  
satisfy the property \eqref{linear}, which can be subsumed as follows:  for some $\alpha \in \R$,  
\be \label{moment} 
\E \psi = 
\psi(\alpha) \quad \hbox{and} \quad \{| \psi  - \E \psi| < t \} = (\alpha -t,\alpha +t). 
\ee
Since the density of $\nu$ decreases away from $0$, it is apparent that $\nu\big((\alpha -t, \alpha +t)\big)$ will be minimized when $|\alpha |$ is as large as possible and, by Step $1^\circ$, it is enough to consider $\alpha \leq 0$.  Given such putatively extremal function $\psi$ (with associated $\alpha$), define   $\widetilde{\psi}$ by 
\be \label{eq:optimal}
\widetilde{\psi}(\theta)= \phi_{\alpha +t}(\theta)  + \psi(\alpha)-\alpha. 
\ee
Then $\widetilde{\psi}(\theta)= \psi(\theta)$ for $\theta \in [a-t,a+t]$  and  $\widetilde{\psi} \leq \psi $ everywhere else.  
Accordingly, $\E \widetilde{\psi}  \leq  \E \psi$, and so if 
$\widetilde{\alpha}$ is defined by $\widetilde{\psi} (\widetilde{\alpha}) = \E \widetilde{\psi}$, then 
$$
\widetilde{\psi} (\widetilde{\alpha}) = \E \widetilde{\psi}  \leq  \E \psi =\psi(\alpha) = \widetilde{\psi} (\alpha). 
$$
It follows that $\widetilde{\alpha} \leq \alpha$ and the inequality is strict unless $\widetilde{\psi}  = \psi$ on the support of $\nu$ (in other words, $\nu$-a.e.).  Consequently,
\be \label{monotone}
\nu\big(\{| \psi  - \E \psi| < t \}\big) = \nu\big((\alpha -t,\alpha +t)\big) \geq \nu\big((\widetilde{\alpha} -t,\widetilde{\alpha} +t)\big) = \nu\big(\{|\widetilde{\psi}  - \E \widetilde{\psi} < t \}\big)
\ee
and so if $\psi$ was extremal, so is $\widetilde{\psi}$. Since the function $\widetilde{\psi}$  is -- up to an additive constant --  of the form $\phi_a$,  \eqref{opt3} follows. 

It remains to justify 
the last assertion of Lemma \ref{extreme2}, namely that ``it is sufficient to consider $t= a-\E \phi_a$.''   
In the argument above, the extremal function $\widetilde{\psi}$ being of the form $\phi_{\alpha +t} +c$,   this condition translates to $\widetilde{\alpha}= \alpha$, an equality which can be immediately deduced in many cases of interest. 
This happens, for example,  if the function $\rho$ defining the density of $\nu$ is strictly decreasing on its support (which holds in our setting, cf.\ \eqref{def:nu}). In that case, the function $\alpha \to  \nu\big((\alpha -t,\alpha +t)\big)$ is strictly increasing for $\alpha < 0$ (excluding, if applicable, the ``trivial'' range, i.e.,  the values of $\alpha$ for which the interval $(\alpha -t,\alpha +t)$ does not intersect the support of $\nu$). Now, if we had $\widetilde{\alpha} < \alpha$, a strict inequality in \eqref{monotone} would follow, contradicting the extremality of $\psi$. 

This special case is sufficient for our intended applications of Lemma \ref{extreme2}. And here is a sketch of the argument addressing the case of general $\rho$.  Let  $\psi_1:= \widetilde{\psi}$ and repeat the construction above with $\psi$ replaced by $\psi_1$ to obtain $\psi_2$ etc. Each $\psi_k$ is extremal and, up to an additive constant, is of the form $\phi_{a_k}$ with $a_1\geq a_2 \geq \ldots$. Next, since $\lim_{x\to \infty} \rho(x) =0$, it follows that 
 $\lim_{\alpha\to -\infty} \nu\big((\alpha -t,\alpha +t)\big) =0$, and so the sequence $(a_k)$ must converge to some finite limit $a$. The limit function $\phi_a$ will be also extremal and will satisfy $t= a-\E \phi_a$. \qed

 \subsection{Proof of Lemma \ref{powers}.} \label{lemma13} 
Since the proof of Lemma \ref{improved} uses the inequalities from Lemma \ref{powers}, we will start with the latter.  Let us recall the main instances of the inequalities in question :  
$$ 
\cos^{n-2}\theta \geq e^{-n\theta^2/2} \quad \hbox{ for } \ \theta \in [0, 3/\sqrt{n}] \ \hbox{ and } \ n \geq 5. \leqno{\eqref{improved2}}
$$
$$ 
\cos^{n-1}\theta \leq e^{-n\theta^2/2} \quad \hbox{ for } \ \theta \in [\sqrt{6/n},\pi/2]  \ \hbox{ and } \ n \geq 3.
\leqno{\eqref{improved4}}
$$
Here is an outline of the proof. We start with \eqref{improved2}, which is effectively a lower bound for the density of $\nu=\nu_n$.  Taking logarithms of both sides and substituting $s = \theta \sqrt{n}$ we see that, for a given $n$,  \eqref{improved2}  is equivalent to. 
\begin{equation} \label{improved3}
h(s)=h_n(s) := ({n-2})\log \cos\frac s{\sqrt{n}} + \frac{s^2}{2}  \geq 0 \quad \hbox{ for } \ s \in [0, 3].
\end{equation}
It is readily verified that $h(0)=h'(0)=0$, while $h''(s) > 0$ on some interval $[0, \alpha)$ (where $\alpha >0$ depends on $n$) and $h''(s) < 0$ for $s>\alpha$.  Now, the domain of $h$ is $[0,\pi\sqrt{n}/2)$  and $\lim_{s\to \pi\sqrt{n}\pi/2} h(s) =-\infty$, so it follows that $h(s) \geq 0$ on some interval $[0, \beta)$ (again, depending on $n$) and $h(s) < 0$ for $s > \beta$. Accordingly, \eqref{improved3}  holds, for a given $n$, iff $h_n(3)\geq 0$. 
Since (by elementary calculus) we have equality in the limit, this would follow once if the sequence 
$\big(h_n(3)\big)$ was nonincreasing. This is not exactly true, but almost: in fact it is nonincreasing starting with $n=7$.  For $n\geq 8$, this can be established numerically by substituting, say,   $s =\frac {3}{\sqrt{n}}$  (to have a compact interval to deal with) and by differentiating with respect to $s$.  The remaining instances are handled by directly evaluating $h_3(2.67), h_4(2.89)$, and $h_k(3)$ for $k=5,6,7$. 

We now pass to the analysis of  \eqref{improved4}.  As in the argument that led to \eqref{improved2}, this reduces to verifying that the inequality holds for $x= \sqrt{6/n}$, and one way to show that is by establishing that the sequence  $\big(\cos^{n-1} \sqrt{6/n}\big)$, which converges to $e^{-3}$, is increasing. Again, this can be verified by the method suggested above. This fact is fairly delicate and is (roughly) equivalent to the inequality $\cos s \leq e^{-3s^2/(6-s^2)}$, which -- while not standard -- may be known, and is also fairly easy to show directly.  \qed

\subsection{Proof of Lemma \ref{improved}.} \label{lemma12}


We first use the bound \eqref{improved2} to prove the estimate from  Lemma \ref{improved} for  $x\in \big[\frac 1{2\sqrt{n}},\sqrt{\frac 6n}\big]$. (Again, the choice of the $\sqrt{\frac 6n}$ cutoff is predicated on the other approach taking care of $x\geq \sqrt{\frac 6n}$.)  The heuristics are as follows. First, having an estimate of the 
 form $e^{-n\theta^2/2}$ allows to express both sides of the inequality in terms of the variable $u= x\sqrt{n}$. Next, it turns out that after this substitution, the density of $\nu$ is bounded from below on the interval $0\leq u \leq 3$ (and, {\em a fortiori}, for $u \in [0,\sqrt{6}]$) by a strictly positive constant, independent of $n$. This means that $u \to \nu_n([u/\sqrt{n}, \pi/2])$ can be upper-bounded on that interval by a strictly decreasing linear function, with equality at $u=0$, and so -- for moderate values of $u$ -- we will have a strict separation between that function and $\frac 12 e^{-nx^2/2}= \frac 12 e^{-u^2/2}$, leading to the asserted improved upper bound. 

Here are some of the details. First, the change of variables $u= x\sqrt{n}, s=\theta\sqrt{n}$ leads to 
\begin{eqnarray*}
\nu_n([0,x]) &=& \big(2I_{n-2}\big)^{-1} \int_0^x \cos^{n-2}\theta\, d\theta\\
&\geq& \big(2I_{n-2}\big)^{-1} \int_0^x  e^{-n\theta^2/2}\, d\theta\\
&=&\big(2I_{n-2}\big)^{-1} \frac 1{\sqrt{n}} \int_0^u  e^{-s^2/2}\, ds .
\end{eqnarray*}
By Proposition \ref{wallis}(ii), the factor in front of the integral converges to $\frac 1{\sqrt{2\pi}}$ as $n\to \infty$ (basically, this reflects the fact that, as we mentioned earlier, the random variable $\sqrt{n}\, \theta$ approximates the standard normal), but -- unfortunately -- is strictly smaller than the limit. If we ignore that discrepancy and compare the ``ideal upper bound'' 
$\frac 12 - \frac 1{\sqrt{2\pi}}\int_0^u  e^{-s^2/2}\, ds$ for $\nu([x,\pi/2])$  to $\frac 25 e^{-u^2/2}$, we get the  picture as shown in Figure \ref{improved-pic}.  

  \begin{figure}[ht!]
\includegraphics[width=0.75\textwidth]{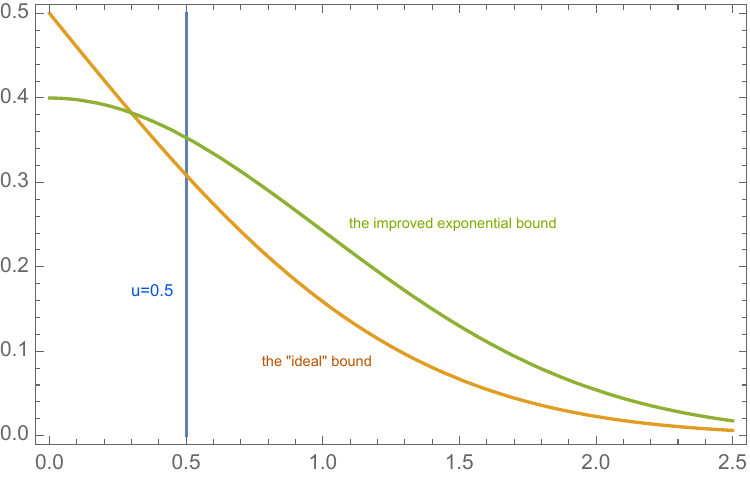}
\caption{The ``ideal upper bound'' for $\nu_n([0,x])$, shown as a function of  $u= x\sqrt{n}$, and the ``improved'' exponential bound $\frac 25 e^{-u^2/2}$. For $u \in [0.5,2.5]$, the former is smaller than the latter, with ``room to spare.''  Note that, for our purposes, we need to analyze  $u \in [0.5,\sqrt{6}]$, but we picture the larger interval $0 \leq u \leq 2.5$ for convenience and clarity.}
\label{improved-pic}
\end{figure}

Clearly and unsurprisingly, there is some room to spare between the ideal upper bound and the bound asserted in Lemma \ref{improved}, which establishes that bound (in the range $\frac 1{2\sqrt{n}} \leq x \leq \sqrt{\frac 6 n}$) for sufficiently large $n$.  Since the constants are explicit, we can verify that ``sufficiently large'' means here ``$n\geq 30$.'' Smaller values of $n$ can be checked directly. 

Finally, we can check directly (numerically) that, for $n=3$, the bound in question is valid when $u=x\sqrt{n} \geq 0.551$. 

\smallskip It remains to show the estimate from  Lemma \ref{improved} for  $x  > \sqrt{\frac {6}{n}}$. In that range, we will use the bound \eqref{cap-cos2} from Proposition \ref{prop-cap}, which restated in the current context asserts that 
\[
\nu_n([x,\pi/2])\leq  (\sqrt{2\pi}\, \kappa_n \sin x)^{-1} \cos^{n-1} x . 
\]
We next appeal to \eqref{improved4} to upper-bound  $\cos^{n-1} x$ by $e^{-nx^2/2}$.   Once this is done, it remains to check that the coefficient $(\sqrt{2\pi}\, \kappa_n \sin x)^{-1}$ doesn't exceed $0.4$  as long as $x\in  \big[ \sqrt{\frac{6}{n}}, \frac{\pi}2\big]$, which is  straightforward and completes the proof.  (The sequence $\big(\frac{\kappa_n }{\sqrt{n}}\big)$ being increasing, see Proposition \ref{wallis}, comes in handy here.) \qed

\subsection{Proof of inequality \eqref{2sidedneg9}} \label{eq60} 
We first rewrite the inequality from the conclusion of \eqref{2sidedneg9} as follows
\be \label{app-neg}
\left(\frac{\cos (b+2t) }{\cos b}\right)^n \geq   \frac{\cos^{2}(b+2t) }{4(n-1)  \sin^2 b} \, .
\ee
By elementary trigonometry, 
\be \label{app-neg2}
\frac{\cos (b+2t) }{\cos b} = 1-2 \sin^2 t -\frac{\sin b}{\cos b} \sin 2t .
\ee
We next use the constraint  $t  \leq \frac{I_{n-1}}{\pi  n \sin^2 b} \times \cos^n b$ from \eqref{2sidedneg9} to upper-bound the last two  terms on the right-hand side of \eqref{app-neg2}. We have 
\begin{eqnarray*}
2 \sin^2 t &\leq & 2 \left(\frac{I_{n-1}}{\pi  n \sin^2 b} \times \cos^n b\right)^2\\
 &\leq &  2 \left(\frac{I_{n-1}}{\pi n \, b^2} \right)^2  \cos^{2(n-1)} b\\
  &\leq & \frac{2I_{n-1}^2}{\pi^2u^4} e^{-u^2} = \big(I_{n-1}^2 n\big) \times  \frac{2 e^{-u^2}}{\pi^2u^4} \times \frac 1n
\end{eqnarray*}
where we used consecutively the inequality $\frac{\cos y}{\sin^2 y} \leq \frac 1{y^2}$,  the substitution $u = b \sqrt{n}$, and the bound \eqref{improved4}, which applies since  $u\geq  \sqrt{6}$.  Similarly 
\begin{eqnarray*}
\frac{\sin b}{\cos b} \sin 2t  &\leq  &\frac{2 I_{n-1}}{\pi  n \sin b} \times \cos^{n-1} b\\
 &\leq & \frac{ I_{n-1}}{ n  b} \times \cos^{n-1} b \\
  &\leq &  \big(I_{n-1} \sqrt{n}\big)\times \frac{e^{-u^2/2}}{u}  \times \frac 1n .
\end{eqnarray*}
We now note that the sequence  $n \to I_{n-1} \sqrt{n}$ decreases to $\sqrt{\frac \pi 2}$ (by Proposition \ref{wallis}) and so, for $n\geq 3$, can be upper-bounded by $I_{2} \sqrt{3} = \frac{\sqrt{3} \pi }{4}$. Moreover, the coefficients  $\frac{2 e^{-u^2}}{\pi^2u^4}$ and $ \frac{e^{-u^2/2}}{u} $ are decreasing functions of $u$, so they can be  upper-bounded by their vales at  $u=\sqrt{6}$. Putting these estimates together, we conclude that
\be \label{app-neg3}
\frac{\cos (b+2t) }{\cos b} \geq 1-\frac{\alpha}n, \quad \hbox{where} \quad \alpha = \frac{1}{96 e^6}+\frac{\pi }{4 \sqrt{2} e^3} < 0.028 .
\ee
At the same time, again for $u\geq \sqrt{6}$ and $n\geq 3$, 
\[
 \frac{\cos^{2}(b+2t) }{4(n-1)  \sin^2 b} \leq  \frac{\cos^{2}b }{4(n-1)  \sin^2 b} \leq \frac n{4(n-1)}\times \frac 1{n b^2} \leq \frac 3{8u^2} \leq \frac 1{16}. 
\]
Now, for $\alpha \in (0,1)$, the sequence  $n \to \left(1-\frac{\alpha}n\right)^n$ increases to $e^{-\alpha}$, so it can be lower-bounded by its initial term. In our setting, the initial term corresponds to  $n=3$, and so 
$\left(1-\frac{\alpha}n\right)^n \geq \left(1-\frac{\alpha}3\right)^3 >0.97$.   This means that the inequality \eqref{app-neg} indeed holds and, again, it is not close. \qed

\vskip1cm 

\end{document}